\newtheorem{theorem}{Theorem}[section]
\newtheorem{lm}[theorem]{Lemma}
\newtheorem{exas}[theorem]{Examples}
\newtheorem{cor}[theorem]{Corollary}
\newtheorem{pro}[theorem]{Proposition}
\newtheorem{defi}[theorem]{Definition}
\newtheorem{rem}[theorem]{Remark}
\newtheorem{nist}[theorem]{}
\newtheorem{const}[theorem]{Construction}
\newcommand{\timplies}{\text{ implies }}
\newcommand{\tiff}{if, and only if, }
\def\p{\varphi}
\def\a{\alpha}
\def\b{\beta}
\def\d{\delta}
\def\ep{\varepsilon}
\def\g{\gamma}
\def\GA{\Gamma}
\def\vk{\varkappa}
\def\la{\lambda}
\def\t{\theta}
\def\s{\sigma}
\def\om{\omega}
\def\z{\zeta}
\def\ra{\rightarrow}
\def\Lra{\Longrightarrow}
\def\lra{\longrightarrow}
\def\sbe{\subseteq}
\def\stm{\setminus}
\def\ems{\emptyset}
\def\ti{\tilde}
\def\ex{\exists}
\def\we{\wedge}
\def\bw{\bigwedge}
\def\bv{\bigvee}
\def\ap{^{\,\prime}}
\def\inv{^{-1}}
\def\st{\ |\ }
\def\nin{\not\in}
\def\1{{\bf 1}}
\def\2{\mbox{{\sf 2}}}
\def\3{\mbox{{\bf 3}}}
\def\AA{{\cal A}}
\def\BB{{\cal B}}
\def\CC{{\cal C}}
\def\EE{{\cal E}}
\def\JJ{{\cal J}}
\def\PP{{\cal P}}
\def\XX{{\cal X}}
\def\YY{{\cal Y}}
\def\PPP{{\sf P}}
\newcommand{\RC}{\operatorname{RC}}
\def\rc{{\sf RC}}
\def\ro{{\sf RO}}
\def\smf{{\,\smallfrown\,}}
\def\smfc{{\,\smallfrown_\circ\,}}
\def\at{{\sf At}}
\def\DHC{{\bf deV}}
\def\HC{{\bf KHaus}}
\def\dom{{\rm dom}}
\def\cod{{\rm cod}}
\def\int{\mbox{{\rm int}}}
\def\cl{\mbox{{\rm cl}}}
\def\Clustsf{\mbox{{\sf Clust}}}
\def\Ultsf{\mbox{{\sf Ult}}}
\def\RCsf{\mbox{{\sf RC}}}
\def\COsf{\mbox{{\sf CO}}}
\def\Tych{{\bf Tych}}
\def\Set{{\bf Set}}
\def\tcx{t_X^C}
\def\tcy{t_Y^C}
\def\tcx0{t_{(X,X_0)}}
\def\tcy0{t_{(Y,Y_0)}}
\def\di{\diamond}
\def\bU0{\bar{U}=(U^0,(U^i,U^{ci})_{i\in\omega})}
\def\bV0{\bar{V}=(V^0,(V^i,V^{ci})_{i\in\omega})}
\def\sfr{\smallfrown}
\title{{\LARGE\bf Categorical Extension of Dualities: }\\
\vspace{0.2cm}
{\LARGE\bf From Stone to de Vries and Beyond, I}\\
\vspace{0.5cm}
{\large\bf G. Dimov, E. Ivanova-Dimova
and W. Tholen}\thanks{The  first  author acknowledges the   support
by Bulgarian National Science Fund, contract no. DN02/15/19.12.2016.
The second author acknowledges the   support by the Bulgarian Ministry of
Education and Science under the National Research Programme “Young scientists and postdoctoral students” approved by DCM \# 577/17.08.2018.
The third author acknowledges the support under Discovery Grant no. 501260 of the Natural Sciences and Engineering Council of Canada.}
\\
\vspace{0.2cm}
{\footnotesize\rm Faculty of Mathematics and Inf.,  Sofia University,}
{\footnotesize\rm 5 J. Bourchier Blvd., 1164 Sofia, Bulgaria}\\
{\footnotesize\rm Dept. of Mathematics and Statistics, York University,}
 {\footnotesize\rm Toronto, Ontario, M3J 1P3, Canada}
}
\author{}
\date{}
\begin{document}

\maketitle

\begin{abstract}
Propounding
a general categorical framework for the extension of dualities,
we present a new proof of the de Vries Duality Theorem
for the category $\bf KHaus$ of compact Hausdorff spaces and their continuous maps, as an extension of a restricted Stone duality. Then, applying a dualization of the categorical framework to the de Vries duality, we give an alternative proof of the extension of the de Vries duality to the category $\bf Tych$ of Tychonoff spaces that was provided by Bezhanishvili, Morandi and Olberding.
In the process of doing so, we obtain new duality theorems for both categories, $\bf{KHaus}$ and $\Tych$.
\end{abstract}

\footnotetext[1]{{\footnotesize
{\em Keywords:}  compact Hausdorff space, Tychonoff space, Stone space, regular closed/open set, irreducible map, quasi-open map, projective cover, (complete) Boolean algebra, (normal) contact algebra, de Vries algebra, ultrafilter, cluster, right/left lifting of a dual adjunction, semi-right adjoint functor, covering class,  Stone duality, Tarski duality, de Vries duality, (universal) de Vries pair, Booleanization of a de Vries pair, (universal) Boolean de Vries extension.}}

\footnotetext[2]{{\footnotesize
{\em 2010 Mathematics Subject Classification:}    54D30, 54D15, 18A40, 18B30, 54E05, 54C10, 54G05, 06E15, 54H10, 03G05.}}

\footnotetext[3]{{\footnotesize {\em E-mail addresses:}
gdimov@fmi.uni-sofia.bg, elza@fmi.uni-sofia.bg, tholen@mathstat.yorku.ca}.}

\section{Introduction}
A restriction of the Stone duality renders the category of extremally disconnected compact Hausdorff spaces and their continuous maps as dually equivalent to the category of complete Boolean algebras and Boolean homomorphisms. Extending this duality, in 1982 de Vries \cite{deV} showed that the category $\bf KHaus$ of all compact Hausdorff spaces is dually equivalent to the category $\bf deV$, whose objects are complete Boolean algebras that come equipped with a so-called {\em normal contact relation}, originally called compingent algebras, but now known as {\em de Vries algebras}. The morphisms of $\bf deV$ are maps satisfying certain compatibility conditions with the Boolean structure and the contact relation, but they are not necessarily Boolean homomorphisms; also, their categorical composition is generally not given by the ordinary map composition.

The de Vries duality
is realized by the contravariant functors $\sf RC$ and $\sf Clust$.
 The functor $\sf RC$ assigns to every space $X$ in $\bf KHaus$ the complete Boolean algebra ${\sf RC}(X)$ of its regular closed sets, provided with the relation that declares two sets to be in contact when they intersect.
 (Recall that
 for $X$ extremally disconnected, ${\sf RC}(X)$ coincides with the algebra ${\sf CO}(X)$ of closed and open sets in $X$, as used in the Stone duality.)
 The functor $\sf Clust$ generalizes Stone's formation of the space of ultrafilters in a (complete) Boolean algebra, assigning to a de Vries algebra its space of so-called {\em clusters}. Of course, ${\sf RC}(X)$ may equivalently be replaced by the de Vries algebra ${\sf RO}(X)$ of regular open sets in $X$ and, as we show in this paper, the functor $\sf Clust$ is (just like  the ultrafilter functor for Boolean algebras) represented by the two-chain $\sf 2$ and may be replaced by the $\bf KHaus$-valued contravariant hom-functor ${\bf deV}(-,{\sf 2})$.

\begin{center}
$\xymatrix{{\bf deV}^{\rm op}\ar@/^0.4pc/[rr]^{\sf Clust\;\;}\ar@{}[rd]|{\rm{}} & {\scriptstyle \simeq}&{\bf KHaus}\ar@/^0.4pc/[ll]^{\sf RC}\\
{\bf CBoo}^{\rm op}\ar@{^(->}[u]^{}\ar@/^0.4pc/[rr]^{\sf Ult} & {\scriptstyle \simeq}&{\bf EKH}\ar@/^0.4pc/[ll]^{\sf CO}\ar@{^(->}[u]_{}}$
\hfil
$\xymatrix{{\bf UBdeV}^{\rm op}\ar@/^0.4pc/[rr]^{\sf \;\;}\ar@{}[rd]|{\rm{}} & {\scriptstyle \simeq}&{\bf Tych}\ar@/^0.4pc/[ll]^{\sf }\\
{\bf deV}^{\rm op}\ar@{^(->}[u]^{}\ar@/^0.4pc/[rr]^{{\bf deV}(-,{\sf 2})} & {\scriptstyle \simeq}&{\bf KHaus}\ar@/^0.4pc/[ll]^{\sf RO}\ar@{^(->}[u]_{}}$
\end{center}

In their 2018 paper \cite{BMO}, Bezhnanishvili, Morandi and Olberding (BMO) showed that the de Vries duality may be extended from ${\bf KHaus}$ to the category ${\bf Tych}$ of all Tychonoff spaces when, on the algebraic side, one considers so-called maximal de Vries extensions. These are de Vries algebras that come equipped with an embedding into a complete atomic Boolean algebra satisfying a maximality condition. As this condition corresponds precisely to the universal property of the Stone-\v{C}ech compactification of a Tychonoff space, we call them {\em universal Boolean de Vries extensions}. The resulting category ${\bf UBdeV}$ that is dually equivalent to $\bf Tych$ contains (a copy of) $\bf deV$ as a full subcategory and is itself a full subcategory of the arrow category of ${\bf deV}$.

The primary goal of this paper is to show that both duality extensions (as displayed in the above diagram) may be obtained with the help of
a general categorical
technique (which we develop here in Theorem \ref{more duality})
for the extension of a given (bottom) dual equivalence when one of the two categories is fully embedded into a larger (top) category. This approach (which in \cite{DDT} we applied to obtain an alternative proof of the Fedorchuk duality \cite{F}, using a special case of Theorem \ref{more duality})
then gives us a first equivalent description of the other (top) category in terms of the given data and,
in fact, leads us to some alternative
new dual equivalences for both categories, $\bf KHaus$ and $\bf Tych$. Furthermore, the categorical viewpoint reveals that the extension technique used for obtaining the BMO duality (on the right) is essentially dual to that one used to obtain the de Vries duality (on the left). We must say ``essentially dual'' here since, roughly speaking, the role of the projective covers used to establish the de Vries duality is taken by the Stone-\v{C}ech compactification in the case of the BMO duality, but one knows that the
 compactification is functorial while the formation of the
 cover is not \cite{AT}.
Nevertheless, the de Vries duality is obtained here with the help of Theorem \ref{more duality}, while the BMO duality is proved using the dualization of the special case of  Theorem \ref{more duality} presented in Corollary \ref{duality corefl}\footnote{Using Corollary \ref{duality corefl},  one can obtain as well the recent extension of the Stone Duality Theorem to the category of zero-dimensional Hausdorff spaces and continuous maps, as established in \cite{DD}.}.

The pursuit of the first goal naturally leads us to reaching our second goal, namely to describe a category, ${\bf deVBoo}/\!\sim$, which is equivalent to $\bf deV$ and, in fact, has the same objects as $\bf deV$, but whose morphisms are easier to handle than those of $\bf deV$. In fact, the definition of de Vries morphism does not flow naturally out of the object definition, but rather appears to be a peculiar mix of preservation conditions for some of the object structure, which justifies itself only by the fact that, in the end, it``works''. By contrast, the morphisms in ${\bf deVBoo}$ are simply Boolean homomorphisms reflecting the contact relation, to be composed by ordinary map composition, and also the equivalence relation  defining the quotient category ${\bf deVBoo}/\!\sim$ has an easy description, both categorically and in terms of the Boolean structure.\footnote{We note that in \cite{DI}, as the restriction of a duality involving the category of all locally compact Hausdorff spaces, another category dually equivalent to $\bf KHaus$
 is presented. While its composition law may be considered to be more natural than that of the category $\bf deV$, its morphisms, which are special multi-valued maps, may not.}
 In our proof of the de Vries duality, ${\bf deVBoo}/\!\sim$ acts as a ``mediator'' between $\bf KHaus$ and $\bf deV$.

Similarly, also in presenting an alternative proof of the BMO duality, we form a ``mediating'' category, $\bf UdeV$, whose definition flows naturally from our categorical extension technique. As a consequence, this category of {\em universal de Vries pairs} is easily seen to be dually equivalent to $\bf Tych$, and we then show that it is equivalent to the BMO category $\bf UBdeV$. In order for us to reach the BMO duality, in both of these categories that are dually equivalent to $\bf Tych$, we must stick with the de Vries notion of morphism. However, in the sequel \cite{DDT2} to this paper, we use the morphisms of ${\bf deVBoo}/\!\sim$ rather than those of $\bf deV$ to establish a category dually equivalent to $\bf Tych$ and thereby avoid the inconveniences pertaining to the notion of de Vries morphisms. In addition, \cite{DDT2} offers various other applications of our categorical extension technique, to obtain alternative proofs or modifications of the dualities established in \cite{BMO1, D-AMH1-10} for the categories of locally compact Hausdorff spaces and of normal Hausdorff spaces and their continuous maps, as well as some other new duality theorems.

Here is an outline of the contents and the organization of the paper. For the reader's convenience, we recall in Section 2 the needed standard facts and fix the notation pertaining to all, the Stone duality, the Tarski duality (of sets and complete atomic Boolean algebras), the de Vries duality, and its Fedorchuk variation. In fact, following a communication by Richard Garner to the third author, we present the Stone duality as obtainable from the $\bf Set$-valued functor of Boolean algebras represented by $\sf 2$, via the Eilenberg-Moore construction for its induced monad, which is the ultrafilter monad. We do so since, as we show in Section 6, the corresponding fact turns out to hold for de Vries algebras as well. At the end of the section we summarize some key topological facts pertaining to projective covers, also known as absolutes, including Gleason's Theorem and Alexandroff's Theorem.

The categorical extension constructions for dual equivalences as given in Theorem \ref{more duality} and Corollary \ref{duality refl} of Section 3 provide the categorical framework that we use to establish new proofs for, respectively, the de Vries duality and the BMO duality; moreover, in the process of doing so, we obtain new duality theorems for the categories ${\bf KHaus}$ and ${\bf Tych}$.
Our notion of $\XX$-covering class (Definition \ref{covering class}) is the common backbone for both constructions. But while Corollary \ref{duality refl} entails just the dualization of the construction that we used in \cite{DDT} for an alternative proof of the Fedorchuk duality, Theorem \ref{more duality} involves a quotient construction that is needed when we apply it towards the de Vries duality, in order to offset the non-functoriality of projective covers.

In Section 4 we give our alternative approach to the de Vries duality, via a two-step procedure. First, when applied to the restricted Stone duality, Theorem \ref{more duality} leads us to a {\em modified} de Vries duality theorem (Theorem \ref{modified deVries}), whereby $\bf KHaus$ is dually equivalent to the category ${\bf deVBoo}/\!\sim$, which we consider to be
a viable contender to $\bf deV$  for a dual representation of $\bf KHaus$. The second step for completing our alternative proof of the de Vries duality theorem then lies in showing that the categories $\bf deV$ and
${\bf deVBoo}/\!\sim$ are equivalent (Theorem \ref{newdeVries}).

Fedorchuk's duality theorem \cite{F} circumvents the inconvenience of the composition law of the category $\bf deV$ (caused by the non-functoriality of projective covers), by admitting only special morphisms in both categories of the de Vries duality (while keeping the objects), namely so-called quasi-open maps in $\bf KHaus$ and sup-preserving Boolean homomorphisms reflecting the contact relation in $\bf deV$. In Section 5 we briefly indicate how to obtain a proof of this duality theorem following the categorical framework but refer to \cite{DDT} for all details.

As first proved in \cite{D-diss} and recorded here as Proposition \ref{Clust as hom}, just like ultrafilters in a Boolean algebra $A$ may equivalently be described as $\sf 2$-valued Boolean homomorphisms on $A$, clusters in a de Vries algebra $A$ may equivalently be presented by $\sf 2$-valued de Vries morphisms on $A$. Having recognized the functor ${\sf Clust}:{\bf deV}^{\rm op}\to \Set$ just like ${\sf Ult}: {\bf Boo}^{\rm op}\to\Set$ as representable by $\sf 2$, with ${\bf Set}(-,{\sf 2})$ serving as a left adjoint in both cases, one easily sees that both adjunctions induce the ultrafilter monad on $\Set$. In Theorem \ref{hom de Vries} we conclude  that the de Vries dual equivalence as given by the lifted hom-functor
$$\DHC(-,\2)\cong {\sf Clust}:\DHC^{\rm op}\lra\HC$$
is therefore nothing but the comparison functor of ${\bf deV}^{\rm op}$ into the Eilenberg-Moore category of the ultrafilter monad, {\em i.e.}, into $\bf KHaus$ (\cite{Manes, MacLane}). From a categorical perspective, this result makes the de Vries duality look very natural indeed.

Similarly to our proof of the de Vries duality, in Section 7 we give a two-step procedure for the establishment of the BMO duality. First, with Corollary \ref{duality refl} applied to the de Vries dual equivalence and the category $\bf KHaus$ as embedded into $\bf Tych$, we obtain the category $\bf UdeV$
and its dual equivalence with the category $\bf Tych$ (Theorem \ref{fdttychth}). The $\bf UdeV$-objects are de Vries algebras $A$ that come equipped with a subset
 $Y$
 of (the compact Hausdorff space)
${\bf deV}(A,{\sf 2})$ which may then serve as the Stone-\v{C}ech compactification of its subspace $Y$. The second step towards establishing the BMO duality (see \cite{BMO}, and also \cite{Bezh,
BBSV,BMO1}) then consists of employing the Tarski duality to encode the subset $Y$ by its power set, treated as a complete atomic Boolean algebra. Briefly, rather than subsets $Y$ one may equivalently consider de Vries embeddings $A\to B$ into complete atomic Boolean algebras $B$ and arrive at the above-mentioned category $\bf UBdeV$, which turns out to be equivalent to $\bf UdeV$ (Theorem \ref{BMO theorem}).

\section{Brief review: Stone, Tarski, de Vries, Fedorchuk}
\begin{nist}\label{Stone}
\rm
{\em The Stone duality}.
We start by considering the {\em basic dual adjunction}
\begin{center}
$\xymatrix{{\bf Boo}^{\rm op}\ar@/^0.6pc/[rr]^{{\bf Boo}(-,{\sf 2})} & {\scriptstyle \top} & {\bf Set}\ar@/^0.6pc/[ll]^{{\bf Set}(-,{\sf 2})}
}$
\end{center}
of contravariant hom-functors represented by {\sf 2}, considered as both, a two-element Boolean algebra and a mere doubleton set. Under this adjunction, for any Boolean algebra $A$ and any set $X$, the Boolean homomorphisms $\p: A\to{\bf Set}(X,{\sf 2})$ correspond bijectively to the maps $f:X\to{\bf Boo}(A,{\sf 2})$, via $f(x)(a)=\p(a)(x)$ for all $a\in A,\, x\in X$. Of course, ${\bf Set}(-,{\sf 2})$ represents the contravariant powerset functor ${\sf P}$, and ${\bf Boo}(A,2)$ may equivalently be described as the set ${\sf Ult}(A)$ of ultrafilters in $A$, {\em i.e.}, as the set of maximal, non-empty, down-directed, and upwards-closed proper subsets of $A$; on morphisms, both contravariant functors operate by taking inverse images. The bijective correspondence between homomorphisms $\p: A\to{\sf P}X$ and maps $f:X\to{\sf Ult}(A)$ is now described by $(a\in f(x)\iff x\in\p(a))$ for all $a\in A,\, x\in X$. The identity map on ${\sf Ult}(A)$ corresponds to the co-unit of the above adjunction, which is therefore described by
$$\varepsilon_A:A\longrightarrow{\sf P}({\sf Ult}(A)),\quad a\longmapsto \{\mathfrak u\in{\sf Ult}(A)\,|\,a\in \mathfrak u\}.$$
The adjunction induces the ultrafilter monad on $\bf Set$, with underlying endofunctor $X\mapsto \beta X={\sf Ult}({\sf P}(X))$. Its Eilenberg-Moore category is known to be (equivalent to) the category ${\bf KHaus}$ of compact Hausdorff spaces (see \cite{Manes, MacLane}). Therefore, the functor ${\sf Ult}$ can assume the role of the comparison functor from ${\bf Boo}^{\rm op}$ into the E-M category of $\beta$ and, as such, becomes ${\bf KHaus}$-valued: one takes the sets $\varepsilon_A(a),\, a\in A,$ as the basic open (and closed) sets of a compact Hausdorff topology on the set ${\sf Ult}(A)$ that, having a base for {\em clopen} (= closed and open) sets, is {\em zero-dimensional} and, thus, makes ${\sf Ult}(A)$ a {\em Stone space}. The ``lifted" functor $\sf Ult$ is easily seen to still have a left adjoint; it assigns to a compact Hausdorff space $X$ the Boolean algebra ${\sf CO}(X)$ of its clopen subsets:
\begin{center}
$\xymatrix{{\bf Boo}^{\rm op}\ar@/^0.6pc/[rr]^{\sf Ult} & {\scriptstyle \top} & {\bf KHaus}\ar@/^0.6pc/[ll]^{\sf CO}
}$
\end{center}
The unit of this adjunction at a compact Hausdorff space $X$ is the map
$$\eta_X: X\longrightarrow{\sf Ult}({\sf CO}(X)),\quad x\longmapsto \{M\in{\sf CO}(X)\,|\,x\in M\},$$
which is continuous since $\eta_{X}^{-1}(\varepsilon_{{\sf CO}(X)}(M))=M$ for all $M\in{\sf CO}(X)$. By compactness of $X$, it is also surjective, as a standard argument shows: if we had $\mathfrak u\in{\sf Ult}({\sf CO}(X))$ outside the image of $\eta_X$, every $x\in X$ would have a clopen neighbourhood $M_x\notin {\mathfrak u}$, {\em i.e.}, $X\setminus M_x\in{\mathfrak u}$, which would be possible only if the sets $M_x$ cover $X$; but finitely many of these cannot cover $X$ since, as a member of $\mathfrak u$, the intersection of their complements cannot be empty. Consequently, $\eta_X$ is a homeomorphism if (and, trivally, only if,) $\eta_X$ is injective, and since $X$ is Hausdorff, this happens precisely when ${\sf CO}(X)$ is a base for the topology of $X$, {\em i.e.}, if $X$ is a Stone space. Therefore, by restriction of the previous adjunction, one obtains the Stone dual equivalence (\cite{ST};  see, for example, \cite{Halmos,J,kop89})
\begin{center}
$\xymatrix{{\bf Boo}^{\rm op}\ar@/^0.6pc/[rr]^{\sf Ult} & {\scriptstyle \simeq} & {\bf Stone}\ar@/^0.6pc/[ll]^{\sf CO},
}$
\end{center}
with $\bf Stone$ denoting the obvious full subcategory of $\bf KHaus$.

We note the well-known fact
(\cite{Halmos, kop89})
that, under the Stone duality, {\em complete} Boolean algebras correspond  to compact Hausdorff spaces that are {\em extremally disconnected} (so that the closure of any open set is still open). Hence, in a self-explanatory notation one has the dual equivalence
\begin{center}
$\xymatrix{{\bf CBoo}^{\rm op}\ar@/^0.6pc/[rr]^{\sf Ult\;\;\;} & {\scriptstyle \simeq} & {\bf EKH}\ar@/^0.6pc/[ll]^{\sf CO\;\;\;}.
}$
\end{center}
\end{nist}

\begin{nist}\label{StoneMorphRest}
\rm
{\em Morphism restrictions of the Stone duality.}
For later use we list some useful restrictions of the Stone duality to special types of morphisms. Recall that a continuous map $f:X\to Y$ of topological spaces is  {\em
quasi-open\/} (\cite{MP}) if, for every open set $U$ in $X$, the interior $\int(f(U))$ of its image under $f$ may be empty only if $U$ itself is empty.
As shown in \cite[Corollary 3.2(c)]{D-PMD12} and \cite[Corollary 2.4(c)]{D-a0903-2593}, under the Stone duality the quasi-open maps of Stone spaces correspond precisely to those homomorphisms $\p:A\to B$ of Boolean algebras which preserve all (existing) suprema; that is: for all $D\subseteq A$, the existence of $\bigvee D$ in $A$ implies the existence of  $\bigvee \p(D)$ in $B$ and the equality $\p(\bigvee D)=\bigvee \p(D)$. Hence, in an obvious notation, the Stone duality restricts to the dual equivalence
\begin{center}
$\xymatrix{({\bf Boo}_{\rm sup})^{\rm op}\ar@/^0.6pc/[rr]^{\sf Ult\;} & {\scriptstyle \simeq} & {\bf Stone}_{\rm q-open}\ar@/^0.6pc/[ll]^{\sf CO\;}.
}$
\end{center}
\medskip
A ``merger" of this duality with the last duality of \ref{Stone} gives us the dual equivalence
\begin{center}
$\xymatrix{({\bf CBoo}_{\rm sup})^{\rm op}\ar@/^0.6pc/[rr]^{\sf Ult} & {\scriptstyle \simeq} & {\bf EKH}_{\rm q-open}\ar@/^0.6pc/[ll]^{\sf CO}.
}$
\end{center}
It is well known that a continuous mapping between extremally disconnected compact Hausdorff spaces is quasi-open if, and only if, it is open (for a more general statement see, e.g., \cite[Lemma 2.7]{D2009}). Hence, the category
${\bf EKH}_{\rm q-open}$ may be written more simply as ${\bf EKH}_{\rm open}$.
\end{nist}

\begin{nist}\label{Tarski}
\rm
{\em The Tarski duality.} Returning to the basic dual adjunction of \ref{Stone}, one notes that the contravariant powerset functor $\sf P$  actually takes values in $\bf CABA$, the category of complete {\em atomic} Boolean algebras (in which every element is the join of {\em atoms}, {\em i.e.}, of minimal non-bottom elements) and their {\em complete} ( = sup- and, hence, also inf-preserving) Boolean homomorphisms; this, of course, is a full subcategory of ${\bf CBoo}_{\rm sup}$, as considered in \ref{StoneMorphRest}. Routine exercises in Boolean algebra (see, for example, \cite{vanOosten}) show that the map
$$\t_A: A\longrightarrow{\sf P}({\sf At}(A)),\quad a\longmapsto \{x\in{\sf At}(A)\,|\,x\leq a\}$$
becomes an isomorphism when the complete Boolean algebra $A$ is atomic; here ${\sf At}(A)$ denotes the set of atoms in $A$. In other words, $\bf CABA^{\rm op}$ is actually the essential image of $\bf Set$ under the functor $\sf P:{\bf Set}\to{\bf Boo }^{\rm op}$, and one has the dual equivalence
\begin{center}
$\xymatrix{{\bf CABA}^{\rm op}\ar@/^0.6pc/[rr]^{\quad{\sf At}} & {\scriptstyle \simeq} & {\bf Set},\ar@/^0.6pc/[ll]^{\quad{\sf P}}
}$
\end{center}
also known as the Tarski duality. Explicitly, the functor $\sf At$ sends a complete homomorphism $\p:A\to B$ to the map ${\sf At}(B)\to{\sf At}(A)$ which assigns to an atom $y\in B$ the
atom (as one can show) $\at(\p)(y):=\bigwedge\{a\in A\st y\leq\p(a)\}$ in $A$. In other words, for $\p:A\to B$ in $\bf Boo$ and all $a\in A, y\in {\sf At}(B)$ one has
$$y\leq\p(a)\iff{\sf At}(\p)(y)\leq a.$$
In this way one sees easily that there is a natural bijection
$\vk_A:{\sf At}(A)\to {\bf CABA}(A,{\sf 2})$ which maps $x\in {\sf At}(A)$ to the characteristic function $\xi_x:=\vk_A(x)$ of the set $\uparrow\! x=\{a\in A\st x\leq a\}$, and its inverse $\vk_A\inv$ maps $\xi:A\to{\sf 2}$ to $\bigwedge\{a\in A\st \xi(a)=1\}$. Consequently,
\begin{center}
$\xymatrix{{\bf CABA}^{\rm op}\ar@/^0.6pc/[rr]^{\quad{\bf CABA(-,{\sf 2})}} & {\scriptstyle \simeq} & {\bf Set}\ar@/^0.6pc/[ll]^{\quad{\bf Set}(-,{\sf 2})}
}$
\end{center}
is an equivalent presentation of the Tarski duality.

In Section 7 we will use a hybrid of these two presentations of the Tarski duality and, for every complete atomic Boolean algebra $A$ and every set $X$, consider the natural isomorphisms
$$\tilde{\theta}_A:A\longrightarrow{\sf P}({\bf CABA}(A,{\sf 2})),\quad a\longmapsto\{\xi\in{\bf CABA}(A,{\sf 2})\st \xi(a)=1\},$$
$$\chi_X:X\longrightarrow{\bf CABA}({\sf P}(X),{\sf 2}), \ x\longmapsto[\,\chi_X^x:{\sf P}(X)\to{\sf 2},\,(\chi_X^x(M)=1\iff x\in M)\,].$$
\end{nist}

\begin{nist}\label{deVriesAlgebraic}
\rm
{\em The algebraic side of the de Vries duality.}
A set $F$ in a topological space $X$ is {\em regular closed} (or a {\em closed domain} \cite{E}) if it is the closure of its interior in $X$: $F={\rm cl(int}(F))$. The collection ${\sf RC}(X)$ of all regular closed sets in $X$ becomes a Boolean algebra, with the Boolean operations $\vee,\we,\,^*,0,1$ given by
\begin{align*}
F\vee G &= F\cup G, & F\we G &= \cl(\int(F\cap G)), & F^* &=  \cl(X\stm F), & 0 &=  \emptyset, & 1 &=  X.
\end{align*}
The Boolean algebra ${\sf RC}(X)$ is actually complete, with the infinite joins and meets given by
\begin{align*}
 \bigvee_{i\in I}F_i &=  \cl(\bigcup_{i\in I}F_i)\
=\cl(\bigcup_{i\in I}\int(F_i))=\cl(\int(\bigcup_{i\in I}F_i))\quad \text{and}\quad
\bigwedge_{i\in I}F_i &=  \cl(\int(\bigcap_{i\in I}F_i)).
 \end{align*}
 We note in passing that, under the assignment $F\mapsto \int(F)$, ${\sf RC}(X)$ is isomorphic to the Boolean algebra ${\sf RO}(X)$ of {\em regular open} sets $U$ in $X$,
 {\em i.e.,} of those subsets $U$ of $X$
  for which $U=\int(\cl(U))$.
 With the {\em contact relation}
 $\smallfrown_X$ given by
$$F\smallfrown_X G\iff F\cap G\neq\emptyset,$$
${\sf RC}(X)$ becomes a {\em contact algebra} (see \cite{DV1}), that is: a Boolean algebra $A$ provided with a relation $\smallfrown$ satisfying the conditions (C1-4) below; if $X$ is a normal Hausdorff space, then ${\sf RC}(X)$ becomes even a {\em normal contact algebra} $A$ (see \cite{deV, F} where, however, different names have been used), defined to satisfy also conditions (C5-6):
\begin{enumerate}
\renewcommand{\theenumi}{\ensuremath{{\rm C}\arabic{enumi}}}
 \item $a\smallfrown a$ whenever $a>0$;\label{c1}
    \item $a\smallfrown b$ implies $a>0$ and $b>0$;\label{c2}
     \item $a\smallfrown b$ implies $b\smallfrown a$;\label{c3}
     \item $a\smallfrown(b\vee c)$  \tiff\!  $a\smallfrown b$ or $a\smallfrown c$; \label{c4}
 \item if $a\not\smallfrown b$, then $a\not\smallfrown c$ and $b\not\smallfrown c^*$ for some
$c$;\label{c5}
    \item  if $a<1$, then there exists $b>0$ such that
$b\not\smallfrown a$.  \label{c6}
\end{enumerate}
With the {\em non-tangential inclusion} relation $\ll$ on $A$ defined by
$(a\ll b\iff a\not\smallfrown b^*),$
these conditions may equivalently be stated as

\begin{enumerate}
\renewcommand{\theenumi}{{\rm I\,}\arabic{enumi}}
\item $a \ll b \timplies a \leq b$; \label{di1}
\item  $a\leq b\ll c\leq d$ implies $a\ll d$; \label{di3}
\item $0\ll 0$, and $a\ll b$ implies $b^*\ll a^*$; \label{di7}
\item $a\ll c$ and $b\ll c$ implies $a\vee b\ll c$; \label{di4}
\item if  $a\ll c$, then $a\ll b\ll c$  for some $b$; \label{di5}
\item  if $a> 0$, then $b\ll a$ for some $b>0$; \label{di6}
\end{enumerate}
that is: (C1-4)$\iff$(I\,1-4) and (C1-6)$\iff$(I\,1-6). In terms of $\ll$, the contact relation $\smallfrown$ takes the form $(a\smallfrown b\iff a\not\ll b^*)$.
In $({\sf RC}(X),\smallfrown_X)$, the associated relation $\ll_X$ reads as
$$F\ll_X G\iff F\subseteq \int_X(G).$$
Note that if a contact algebra $(A,\smallfrown)$ satisfies also condition C6, then every $a\in A$ may be written as
$$a=\bigvee\{b\in A\st b\ll a\}.$$
Every Boolean algebra $A$ may be endowed with a largest contact relation, $\smallfrown_{\bullet}$, and a least one, $\smfc$, respectively defined  by
$$a\smf_{\bullet} b\Longleftrightarrow a\neq 0 \neq b \qquad{\rm and}\qquad a\smfc b\Longleftrightarrow a\we b\neq 0$$
for all $a,b\in A$. The non-tangential inclusion relation $\ll_{\circ}$ associated with $\smfc$, is just the order relation of the Boolean algebra $A$, which makes $(A,\smfc)$ even a normal contact algebra. We call this NCA structure on $A$ {\em discrete}.

One is now ready to define the category $\bf deV$ of {\em de Vries algebras}: these are complete normal contact algebras, and a {\em de Vries morphism} $\p:A\to B$ is a map of de Vries algebras satisfying the following conditions for all $a,b\in A$:
\begin{enumerate}
\renewcommand{\theenumi}{{\rm V}\arabic{enumi}}
\item $\p(0)=0$;
\item $\p(a\we b)=\p(a)\we \p(b)$;
\item if $a^{\ast}\ll b$ in $A$, then $\p(a)^{\ast}\ll
\p(b)$ in $B$;
\item $\p(a)=\bigvee\{\p(b)\st b\ll a\}$.
\end{enumerate}
From V1-3 alone one obtains $\p(1)=1,\; \p(a^{\ast})\leq \p(a)^{\ast}$ and $(a\ll b\Lra \p(a)\ll \p(b))$.\\ It is important to note that a de Vries morphism is not necessarily a Boolean homomorphism; conversely, a Boolean homomorphism with $(a\ll b\Lra \p(a)\ll \p(b))$ for all $a, b \in A$ satisfies V1-3, but not necessarily V4, unless it preserves suprema.

The composite $\psi\diamond \p:A\to C$ of $\p$ with $\psi:B\to C$ in the category $\bf deV$ is given by
$$(\psi\diamond\p)(a)=\bv\{(\psi\circ \p)(b)\,|\,b\ll a\}$$
for all $a\in A$. In general, this composition differs from the ordinary composition of maps, but it will coincide with it when $\psi$ preserves suprema. The identitity map on $A$ maintains this role in the category $\bf deV$.
\end{nist}

\begin{nist}\label{deVries}
\rm{\em The de Vries duality.}
Extending the contact relation
$\smallfrown$ on a Boolean algebra $A$ to a relation on
 its ultrafilters
 (which, for brevity, will again be denoted by $\smallfrown$)
 by
$$\mathfrak u\smallfrown\mathfrak v\iff \forall\; c\in\mathfrak u,\; d\in\mathfrak v:c\smallfrown d,$$
one shows (\cite[Lemma 3.5, p. 222]{DV1})
that the contact relation for elements $a,b\in A$ is characterized by its ultrafilter extension, via
$$a\smallfrown b\iff \exists\; \mathfrak{u, v}\in{\sf Ult}(A): a\in\mathfrak u,\; b\in\mathfrak v,\; \mathfrak u\smallfrown\mathfrak v.$$
Furthermore, {\em if $A$ is normal, then \  $\smallfrown$ \ is an equivalence relation on ${\sf Ult}(A)$} (\cite{DV1,DUV}).
In order to relate de Vries algebras with compact Hausdorff spaces, instead of considering ultrafilters directly, one uses the closely related concept of {\em cluster}\ \  in a normal contact algebra $A$; this is a subset $\mathfrak c$ of $A$ satisfying the following conditions for all $a,b\in A$:
\begin{enumerate}
\renewcommand{\theenumi}{cl\,\arabic{enumi}}
\item $\mathfrak c\neq\emptyset$;
\item\label{cl1}  $a,b\in\mathfrak c$ implies $a\smallfrown b$;
\item\label{cl2} $a\vee b\in\mathfrak c$ implies $a\in\mathfrak c$ or
$b\in\mathfrak c$;
\item\label{cl3} if $a\smallfrown b$ for all $b\in\mathfrak c$, then $a\in\mathfrak c$.
\end{enumerate}
As an easy consequence one has the property
$(a\in\mathfrak c,\, a\le b\Longrightarrow b\in\mathfrak c)$.
Proceeding as in the proof of Theorem 5.8 of \cite{NW} one shows that {\em every ultrafilter $\mathfrak u$ in a normal contact algebra $A$ gives the cluster}
$${\mathfrak c}_{\mathfrak u}=\{a\in A\,|\,\forall\, b\in\mathfrak u: a\smallfrown b\},$$
and {\em every cluster $\mathfrak c$ in $A$ comes about this way}, that is: $\mathfrak c={\mathfrak c}_{\mathfrak u}$, for some $\mathfrak u\in{\sf Ult}(A)$; actually, {\em for  every $a\in \mathfrak c$ one has $\mathfrak c={\mathfrak c}_{\mathfrak u}$, for some $\mathfrak u\in{\sf Ult}(A)$ with $a\in\mathfrak u$.} One concludes that $\mathfrak c_{\mathfrak u}$ {\em is the unique cluster containing a given ultrafilter} $\mathfrak u$, and that {\em any two clusters comparable by inclusion must actually be equal.} Most importantly, the
relation \ $\smallfrown$ \ for ultrafilters is characterized by $$\mathfrak u\smallfrown\mathfrak v\iff \mathfrak c_{\mathfrak u}=\mathfrak c_{\mathfrak v}.$$

With ${\sf Clust}(A)$ denoting the set of all clusters in a contact algebra $A$,
%and with $\bf deV$ denoting the category of de Vries algebras and de Vries morphisms,
very similarly to the Stone duality, one can now establish the de Vries dual equivalence
\begin{center}
$\xymatrix{{\bf deV}^{\rm op}\ar@/^0.6pc/[rr]^{{\sf Clust}} & {\scriptstyle \simeq} & {\bf KHaus},\ar@/^0.6pc/[ll]^{{\sf RC}}
}$
\end{center}
as follows. We already saw in \ref{deVriesAlgebraic} that, for a compact Hausdorff space $X$, \,${\sf RC}(X)$ becomes a de Vries algebra, and by assigning to a continuous map $f:X\to Y$ the morphism $${\sf RC}(f):{\sf RC}(Y)\to{\sf RC}(X),\;G\mapsto {\rm cl}(f^{-1}({\rm int}(G)),$$ one obtains the functor $\sf RC$. For a compact Hausdorff space $X$
one has the natural map
$$\s_X: X\to{\sf Clust}({\sf RC}(X)),\quad x\mapsto \{F\in{\sf RC}(X)\,|\,x\in F\},$$
and for a de Vries algebra $A$, one considers the map
$$\tau_A: A\to {\sf RC}({\sf Clust}(A)),\quad a\mapsto \{\mathfrak c\in{\sf Clust}(A)\,|\,a\in\mathfrak c\}.$$
With the {\em the topology on ${\sf Clust}(A)$ to be taken to have the sets $\tau_A(a),\;a\in A,$ as its basic closed sets,} one can then compute the interior ${\rm int}(\tau_A(a))$ as the complement of $\tau_A(a^*)$ in ${\sf Clust}(A)$ (see \cite{deV}).
 Now $\sf Clust$ becomes a contravariant functor when one assigns to a de Vries morphism $\p:A\to B$ the map
$${\sf Clust}(B)\to{\sf Clust}(A),\quad {\mathfrak d}\mapsto \{a\in A\st \forall\, b\in \!A\; (\,b\ll a^* \Longrightarrow \p(b)^*\in\mathfrak d\,)\}.$$
We note that, should the $\bf deV$-morphism $\p$ be a Boolean homomorphism, the map ${\sf Clust}(B)\to{\sf Clust}(A)$ is more succinctly described by $\mathfrak c_{\mathfrak v}\mapsto {\mathfrak c}_{\p^{-1}(\mathfrak v)}$, for all $\mathfrak v\in{\sf Ult}(B)$.
Apart from having worked with regular open sets, rather than with regular closed sets, as we do here, de Vries \cite{deV} showed that the natural maps $\s_X$ and $\tau_A$ become natural isomorphisms and thereby established his duality theorem.

As de Vries \cite{deV} also noted, his duality extends the restricted Stone duality between $\bf CBoo$ and $\bf EKH$ (see the end of \ref{Stone}). Indeed,
for any complete Boolean algebra $A$, the discrete de Vries algebra $(A,\smfc)$ (see \ref{deVriesAlgebraic}) satisfies ${\sf Clust}(A)={\sf Ult}(A)$. In this way one sees that the diagram
$$\xymatrix{{\bf deV}^{\rm op}\ar[r]^{\sf Clust\;\;}\ar@{}[rd]|{\rm{}} & {\bf KHaus}\\
{\bf CBoo}^{\rm op}\ar@{^(->}[u]^{}\ar[r]^{\sf Ult} & {\bf EKH}\ar@{^(->}[u]_{}}$$
commutes.
\end{nist}

\begin{nist}\label{Fedorchuk}
\rm
{\em The Fedorchuk duality.}
The Fedorchuk duality may be obtained from the de Vries duality by keeping the objects in both categories under consideration, but restricting the admissible morphisms. On the algebraic side this is done very naturally, as follows.

We take
as objects of the category $\bf Fed$ all de Vries algebras (= complete normal contact algebras), but as morphisms $\p:A\to B$ only suprema-preserving Boolean homomorphisms reflecting the contact relation, so that $(\p(a)\smallfrown \p(b)\Lra a\smallfrown b)$ or, equivalently, preserving the relation $\ll$, that is $(a\ll b\Lra \p(a)\ll\p(b))$, for all $a,b\in A$. In this way, $\bf Fed$ becomes a non-full subcategory of $\bf deV$, and Fedorchuk \cite{F} proved that, under the de Vries duality, $\bf Fed$ becomes dually equivalent to the category ${\bf KHaus}_{\rm q-open}$ of compact Hausdorff spaces and their quasi-open continuous maps (as defined in \ref{StoneMorphRest}). In other words, one has the commutative diagram
$$\xymatrix{{\bf deV}^{\rm op}\ar[r]^{\sf Clust\;\;}\ar@{}[rd]|{\rm{}} & {\bf KHaus}\\
{\bf Fed}^{\rm op}\ar[u]^{}\ar[r]^{\sf Clust\quad\quad} & {\bf KHaus}_{\rm q-open}\ar[u]_{}}$$
and obtains the dual equivalence
\begin{center}
$\xymatrix{{\bf Fed}^{\rm op}\ar@/^0.6pc/[rr]^{{\sf Clust}\quad\quad} & {\scriptstyle \simeq} & {\bf KHaus}_{\rm q-open}\;.\ar@/^0.6pc/[ll]^{{\sf RC}\quad\quad}
}$
\end{center}
We remark that, unlike in $\bf deV$, the morphism composition in $\bf Fed$ coincides with the set-theoretic composition of maps.
\end{nist}

\begin{nist}\label{summary}
\rm
{\em Summary and a look ahead}.
The cuboid of the following commutative diagram summarizes how the de Vries and Fedorchuk dualities extend certain restricted Stone dualities, and its upper part visualizes the BMO extension of the de Vries duality:
\begin{center}
$\xymatrix{& {\bf UBdeV}^{\rm op}\ar[rr]^{\simeq}& &{\bf Tych}\\
&&&\\
& {\bf deV}^{\rm op}\ar@{^(->}[uu]\ar[rr]^{\simeq}\ar@{}[rrd]|{\rm{}} & & {\bf KHaus}\ar@{^(->}[uu]\\
{\bf Fed}^{\rm op}\ar[ru]\ar[rr]^{\quad\quad\simeq}\ar@{}[rrd]|{\rm{}}  && {\bf KHaus}_{\rm q-open}\ar[ru] &\\
& {\bf CBoo}^{\rm op}\ar@{^(->}[uu]^{}\ar[rr]^{\quad\quad\quad\simeq} & & {\bf EKH}\ar@{^(->}[uu]_{}\\
({\bf CBoo}_{\rm sup})^{\rm op}\ar[ru]\ar@{^(->}[uu]^{}\ar[rr]^{\simeq} & & {\bf EKH}_{\rm{open}}\ar[ru]\ar@{^(->}[uu]_{}}$
\end{center}
Our goal is now to provide a general categorical framework which will allow us to build equivalent substitutes for the categories $\bf deV$ and ${\bf UBdeV}$ (as well as for $\bf Fed$), just based on the bottom and right panels of this diagram. %(In particular, the Fedorchuk duality should be established without recourse to the de Vries duality.)
In the case of the de Vries duality, this means that, using just the restricted Stone duality between $\bf CBoo$ and $\bf EKH$, with the latter category being fully embedded into $\bf KHaus$, the framework should provide us with a category that, by construction, is
 dually equivalent
 to $\bf KHaus$ and fairly easily seen to be also equivalent to $\bf deV$; likewise for $\bf UBdeV$. Our key tool to this end is an abstraction of
 {\em projective covers}, also called {\em absolutes}. For the reader's convenience, we briefly recall them next.
\end{nist}

\begin{nist}\label{projective}
\rm
{\em Projective covers and essential surjections}.
Recall that an object $P$ in a category $\CC$ is {\em projective} if the hom-functor $\CC(P,-):\CC\to{\bf Set}$ preserves epimorphisms; that is:
for every epimorphism $f:X\to Y$ in $\CC$, any morphism $g:P\to Y$ factors as $g=f\circ h$, for some morphism $h:P\to X$. Given an object $X$, a projective object $P$ together with an epimorphism $p:P\to X$ is a {\em
projective cover} of $X$ if
the equality $p\circ t=p$, where $t$ is an endomorphism  of $P$, is possible only if $t$ is an isomorphism.
One sees immediately that $X$ determines $P$ and $p$ up to isomorphism: if $q:Q\to X$ is also a projective cover of $X$, then $q\circ h=p$ for some isomorphism $h:P\to Q$.

Also easily shown is the fact that an epimorphism $p:P\to X$ with $P$ projective is a projective cover of $X$ if, and only if, the epimorphism $p$ is {\em essential}, that is: for every morphism $r:Z\to P$, if $p\circ r$ is an epimorphism, so is $r$. Indeed, assuming $p$ to be a projective cover and considering $r$ such that $p\circ r$ is epic, the projectivity of $P$ gives a morphism $s:P\to Z$ with $p\circ r\circ s=p$, so that $r\circ s$ is an isomorphism by hypothesis and, thus, $r$ is forced to be a (split) epimorphism. Conversely, assuming $p$ to be essential, any endomorphism $t$ with $p\circ t=p$ must be epic, so that the projectivity
of $P$ gives a morphism $s$ with $t\circ s=1_P$; but since $p\circ s= p\circ t\circ s = p$, so that also $s$ must be an epimorphism, $s$, and then also $t$, is actually an isomorphism.

We note that the argumentation remains valid if we relativize the notions of projective object and projective cover, by trading ``epimorphism" everywhere for ``$\EE$-morphism", where $\EE$ may be any class of morphisms in $\CC$ that contains all split epimorphisms (=retractions), but a split monomorphism (=section) lies in $\EE$ only if it is an isomorphism.
We also note the known fact
 (see \cite{AT})
  that, when every object $X$ in $\CC$ admits an $\EE$-projective cover $\pi_X: EX\to X$, with $\EE$ a class of extremal epimorphisms in $\CC$ ({\em i.e.}, of those epimorphisms $p$ that factor through a monomorphism $m$ as $p=m\circ g$ only when $m$ is an isomorphism), then $E$ may be made a functor $\CC\to\CC$ and $\pi$ a natural transformation $E\to{\rm Id}_{\CC}$ {\em only if} every object in $\CC$ is already $\EE$-projective---despite the fact that $EX$ and $\pi_X$ are determined by $X$, up to isomorphism.

In $\CC={\bf KHaus}$, the epimorphisms are extremal and coincide with the surjections. An essential epimorphism $f:X\to Y$ is also called {\em irreducible}, as it is characterized by the
 following two properties: 1) $f(X)=Y$, and 2) for all closed subsets $Z$ of $X$, \,$f(Z)=Y$ is possible only when $Z=X$.

By {\em Gleason's Theorem} \cite{Gle}, $P\in{\bf KHaus}$ is projective if, and only if, $P$ is extremally disconnected. Consequently, any irreducible map $p:P\to X$ in $\bf KHaus$ with $P$ extremally disconnected serves as a projective cover for a given $X$. It is well known
(see, for example, \cite{Wa})
that $P$ may be constructed as the Stone-dual of the complete Boolean algebra ${\sf RC}(X)$, with $p=\pi_X$ the map
$$EX={\sf Ult}({\sf RC}(X))\to X$$
that assigns to an ultrafilter $\mathfrak u$ of regular closed sets of $X$ the only point in $\bigcap\mathfrak u$.

Under the name {\em absolute}, projective covers have been investigated intensively in categories of topological spaces larger than $\bf KHaus$, such as the category of
regular Hausdorff
spaces (see, for example, \cite{ArP, PS, PW}), with the notion of projectivity relativized to {\em perfect} surjections. Of course, in $\bf KHaus$ every map is perfect, that is: a closed map with compact fibres. With the restriction to perfect maps, the characterization of projective covers via extremal disconnectedness and irreducibility remains valid when compact Hausdorff spaces are traded for
regular Hausdorff
 spaces.
 For the definition and properties of absolutes of arbitrary topological spaces,  see the survey paper \cite{PS}. A modern and very efficient presentation appeared in \cite{Rump}.

We will  make essential use of {\em Alexandroff's Theorem} \cite[p. 346]{Alex}
(see also \cite[Theorem (d) (3), p. 455]{PW}),
which one derives easily from Ponomarev's results \cite{Pon1} on irreducible maps:
 For $p:X\lra Y$ a closed irreducible map of topological spaces, the map
  $$\rho_p:\RC(X)\lra \RC(Y),\ \ H\mapsto p(H),$$
  is a Boolean isomorphism, with $\rho_p\inv(K)=\cl_X(p\inv(\int_Y(K)))$, for all $K\in\RC(Y)$.

  Finally, we will also use the well-known fact
(see, e.g., \cite{CNG},
p.271, and, for a proof, \cite{VDDB})
that, when $Y$ is a dense subspace of a topological space $X$, then one has the Boolean isomorphisms $r$ and $e$ that are inverse to each other:
 $$r:\rc(X)\lra \rc(Y),\  F\mapsto F\cap Y,\qquad{\rm and}\qquad e:\rc(Y)\lra\rc(X),\  G\mapsto \cl_X(G).$$
 \end{nist}

\section{A general framework for extending dualities}
In fixing our notation, for a dual adjunction we use the symbolism
\begin{center}
$\xymatrix{{\AA}^{\rm op}\ar@/^0.6pc/[rr]^{T} & {\scriptstyle \ep\;\top\;\eta} & {\XX}\ar@/^0.6pc/[ll]^{S}}$
\end{center}
to indicate that $T$ is right adjoint to $S$ with adjunction {\em units} $\eta_X:X\to TSX$ in $\XX$ and {\em counits} $\ep_A:A\to STA$ in $\AA$; they define the natural transformations $\eta:{\rm Id}_{\XX}\to TS$ and $\ep:{\rm Id}_{\AA}\to ST$ satisfying the {\em triangular identities}\footnote{Of course, formally we should have written $\ep^{\rm op}:ST\to{\rm Id}_{\AA^{\rm op}}$ for the counit of the dual adjunction and listed the triangular equalities as $T\ep^{\rm op}\circ\eta T=1_T,\;\ep^{\rm op}S\circ S\eta=1_S$. But in this paper we will generally suppress the explicit use of the op-formalism for functors and natural transformations, in order to keep the notation simple and maintain the symmetric presentation of the units in $\XX$ and the counits in $\AA$.}
$$T\ep\circ\eta T=1_T\quad{\rm and}\quad S\eta\circ \ep S=1_S.$$
When the dual adjunction is a dual equivalence, so that the units and counits are isomorphisms, we replace $\top$ by $\simeq$ in the symbolic notation, as we have already done so in the previous section.

\begin{defi}\label{lifting}
\rm
For given functors $I:\AA\to\BB$ and $J:\XX\to\YY$, a dual adjunction $\xymatrix{{\BB}^{\rm op}\ar@/^0.6pc/[rr]^{\tilde{T}} & {\scriptstyle \tilde{\ep}\;\top\;\tilde{\eta}} & {\YY}\ar@/^0.6pc/[ll]^{\tilde{S}}}$ is a
{\em a right lifting} of $\xymatrix{{\AA}^{\rm op}\ar@/^0.6pc/[rr]^{T} & {\scriptstyle \ep\;\top\;\eta} & {\XX}\ar@/^0.6pc/[ll]^{S}}$ (along $I$ and $J$) if $I$ and $J$ commute with the right adjoints,  so that $\tilde{T}I=JT$ holds:
\begin{center}
$\xymatrix{\BB^{\rm op}\ar[r]^{\tilde{T}} & \YY\\
            \AA^{\rm op}\ar[r]_{T}\ar[u]^{I} & \XX\;;\ar[u]_{J}}$
            \end{center}
it is a {\em left lifting} if $IS=\tilde{S}J$ holds.
\end{defi}

\begin{pro}\label{lifting pro}
{\rm (1)} For a right lifting as in {\rm \ref{lifting}}, there is a natural transformation $\g:IS\to\tilde{S}J$ satisfying the conditions $\tilde{T}\g\circ\tilde{\eta}J=J\eta$ and $\g T\circ I\ep=\tilde{\ep}I$ and being uniquely determined by each of them; $\g$ is an isomorphism if the given dual adjunctions are both dual equivalences.

{\rm (2)} For a left lifting as in {\rm \ref{lifting}}, there is a natural transformation $\d:JT\to\tilde{T}I$ satisfying the conditions $\tilde{S}\d\circ \tilde{\ep}I=I\ep;$ and $\d S\circ J\eta= \tilde{\eta}J$ and being uniquely determined by each of them;
$\d$ is an isomorphism if the given dual adjunctions are both dual equivalences.
\end{pro}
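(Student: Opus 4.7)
The plan is to construct $\g$ as an adjunction transpose and verify both defining conditions by a standard mate-style calculation with units, counits, and triangular identities; part (2) then follows by the symmetric dual argument.

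For part (1), I would define $\g_X:ISX\to\tilde{S}JX$ as the transpose of $J\eta_X:JX\to JTSX$ under the adjunction $\tilde{S}\dashv\tilde{T}$, noting that the right-lifting equality $\tilde{T}I=JT$ identifies $JTSX$ with $\tilde{T}(ISX)$, so that this transpose lives in the correct hom-set. By the very definition of the adjunction bijection, $\g_X$ is then the unique morphism satisfying $\tilde{T}\g_X\circ\tilde{\eta}_{JX}=J\eta_X$, which is the first condition. Naturality of $\g$ in $X$ is a direct consequence of the naturality of $\eta$, of $\tilde{\eta}$, and of the adjunction bijection.

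To verify the second condition $\g T\circ I\ep=\tilde{\ep}I$ at an object $A\in\AA$, I would compare transposes under $\tilde{S}\dashv\tilde{T}$. Using the contravariance of $\tilde{T}$, the right-lifting equality, the first condition applied at $X=TA$, the functoriality of $J$, and finally the triangular identity $T\ep\circ\eta T=1_T$:
$$\tilde{T}(\g_{TA}\circ I\ep_A)\circ\tilde{\eta}_{JTA}\;=\;\tilde{T}(I\ep_A)\circ\tilde{T}\g_{TA}\circ\tilde{\eta}_{JTA}\;=\;JT\ep_A\circ J\eta_{TA}\;=\;J(T\ep_A\circ\eta_{TA})\;=\;1_{JTA}.$$
Applying the triangular identity of the tilde adjunction to $\tilde{\ep}_{IA}$ yields the same value $1_{\tilde{T}IA}=1_{JTA}$ for its transpose, so the two morphisms $\g_{TA}\circ I\ep_A$ and $\tilde{\ep}_{IA}$ coincide.

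For uniqueness of $\g$ from the second condition alone, suppose $\g':IS\to\tilde{S}J$ is a natural transformation satisfying $\g'T\circ I\ep=\tilde{\ep}I$. Applying naturality of $\g'$ at $\eta_X:X\to TSX$ gives $\g'_X\circ IS\eta_X=\tilde{S}J\eta_X\circ\g'_{TSX}$; precomposing this equation with $I\ep_{SX}$ and invoking both the triangular identity $IS\eta_X\circ I\ep_{SX}=1_{ISX}$ (the image under $I$ of $S\eta\circ\ep S=1_S$) and the hypothesis at $A=SX$, one obtains the explicit formula $\g'_X=\tilde{S}J\eta_X\circ\tilde{\ep}_{ISX}$. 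This same formula shows at once that $\g$ is an isomorphism whenever all the units and counits $\eta_X,\ep_A,\tilde{\eta}_Y,\tilde{\ep}_B$ are, since then both factors are isomorphisms.

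Part (2) is handled by the exactly dual argument: one defines $\d_A$ as the transpose of $I\ep_A:IA\to\tilde{S}JTA$ (using the left-lifting equality $IS=\tilde{S}J$) under $\tilde{S}\dashv\tilde{T}$, which makes the first condition $\tilde{S}\d\circ\tilde{\ep}I=I\ep$ automatic; the second condition $\d S\circ J\eta=\tilde{\eta}J$ is then verified by a transpose calculation using the triangular identity on the $T$-side, and its associated uniqueness follows by exploiting the naturality of any candidate at $\ep_A$ together with the identity $T\ep\circ\eta T=1_T$ to produce an explicit formula $\d'_A=\tilde{T}I\ep_A\circ\tilde{\eta}_{JTA}$. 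The principal technical issue throughout is merely the careful bookkeeping of the contravariances and of the placement of each unit and counit; no new conceptual ingredient is required beyond the standard mate correspondence of natural transformations across a pair of adjunctions.
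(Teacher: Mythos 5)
Your proposal is correct and follows essentially the same route as the paper: you define $\g$ as the adjunction transpose of $J\eta$ under $\tilde{S}\dashv\tilde{T}$ and arrive at the same explicit formula $\g=\tilde{S}J\eta\circ\tilde{\ep}IS$, from which the isomorphism claim for dual equivalences is immediate. Your verifications of the second condition $\g T\circ I\ep=\tilde{\ep}I$ via transposes and of the uniqueness claims (which the paper leaves implicit in the phrase ``corresponds by adjunction'') are accurate, as is the dualization for part (2) with $\d=\tilde{T}I\ep\circ\tilde{\eta}JT$.
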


\begin{proof}
(1) Since for all $B\in|\BB|,Y\in|\YY|$ one  has the natural bijections $\BB(B,\tilde{S}Y)\cong\YY(Y,\tilde{T}B)$, the transformation $\g:IS\to\tilde{S}J$ corresponds by adjunction to the transformation $J\eta:J\to JTS=\tilde{T}IS$; explicitly, one has $\g=\tilde{S}J\eta\circ\tilde{\ep}IS$. For dual equivalences, $\eta$ and $\tilde{\ep}$ are both isomorphisms, so that then also $\g$ is an isomorphism.

(2) follows dually from (1); the explicit formula for $\delta$ is now $\delta= \tilde{T}I\ep\circ\tilde{\eta}JT.$
\end{proof}

Given a dual equivalence
$\xymatrix{{\AA}^{\rm op}\ar@/^0.6pc/[rr]^{T} & {\scriptstyle \ep\;\simeq\;\eta} & {\XX}\ar@/^0.6pc/[ll]^{S}}$
and an embedding $J$ of $\XX$ as a full subcategory of a category $\YY$ that, without loss of generality, is assumed to be an inclusion functor, we wish to give a {\em natural construction} for a category $\BB$ into which $\AA$ may be fully embedded via $I$ and which allows for a right lifting along $I$ and $J$ that renders $\BB$ as dually equivalent to $\YY$. Our construction depends on a given class $\PP$ of morphisms in $\YY$, the role model for which is the class of essential surjections of projective covers of compact Hausdorff spaces, as described in \ref{projective}.
\begin{defi}\label{covering class}
\rm
For a full subcategory $\XX$ of $\YY$, we call a class $\PP$ of morphisms in $\YY$ an $\XX$-{\em covering class} in $\YY$ if it satisfies the following conditions:
\begin{enumerate}
\renewcommand{\theenumi}{P\arabic{enumi}}
\item $\forall X\in|\XX|:1_X\in\PP$;
\item every $\XX$-object $X$ is $\PP$-projective in $\YY$, that is:\\ $\forall\;(p:Y\to Y')\in\PP,\,f:X\to Y'\ \exists\;g:X\to Y:\,p\circ g=f;$
\item $\YY$ has enough $\PP$-projectives in $\XX$, that is:\\
$\forall \,Y\in |\YY|\;\ex\,(p:X\to Y)\in\PP\ :\ X\in |\XX|.$
\end{enumerate}
Without loss of generality, one may assume that the domain of every morphism in $\PP$ lies in $\XX$, and that $\PP$ is closed under precomposition with isomorphisms in $\XX$; indeed, with $\PP$ also the class $\overline{\PP}:=\{p:X\to Y\,|\, X\in |\XX|,\, p\in \PP\circ{\rm Iso}(\YY)\}$ satisfies P1-3.
\end{defi}

\begin{exas}\label{coreflective rem}
\rm
(1) If $ \XX$ is a full coreflective subcategory of $\YY$, one always has an $\XX$-covering class $\PP$ of morphisms in $\YY$. Just take for $\PP$ the class of coreflections of $\YY$-objects into $\XX$, that is: of $\YY$-morphisms $p : X \to Y$ with $X\in |\XX|$ such that every $\YY$-morphism $X'\to Y$ with $X' \in |\XX|$ factors uniquely through $p$.

(2) More generally than in (1), assume that the full embedding $J:\XX\to\YY$ admits a functor $E:\YY\to\XX$ and a natural transformation $\pi:JE\to {\rm Id}_{\YY}$, such that $EJ\cong{\rm Id}_{\XX}$ and $\pi J$ is an isomorphism. Then the class
$$\PP_{\pi}=\{p:X\to Y\;|\;X\in|\XX|,\; \beta\circ p=\pi_Y \text{ for some isomorphism }\beta:EY\to X\}$$
is an $\XX$-covering class in $\YY$.
\end{exas}

\begin{const}\label{const}
\rm
As a precursor to the construction of a category $\BB$ as envisaged before \ref{covering class}, for the given dual equivalence and an $\XX$-covering class $\PP$ in $\YY$, we consider the category
$${\sf C}(\AA,\PP,\XX),$$
defined as follows:

\begin{itemize}

\item objects in ${\sf C}(\AA,\PP,\XX)$
are pairs $(A,p)$ with $A\in|\AA|$ and $p:TA\lra Y$ in the class $\PP$;

\item  morphisms $(\varphi,f):(A,p)\lra(A',p')$ in ${\sf C}(\AA,\PP,\XX)$ are given by
morphisms $\varphi:A\lra A'$ in $\AA$ and  $f:Y'\lra Y$ in $\YY$,
such that  $p\circ T\varphi=f\circ p'$:
\begin{center}
$\xymatrix{TA\ar[d]_{p} & TA'\ar[l]_{T\varphi}\ar[d]^{p'}\\
            Y & Y'\ar[l]^{f}}$
            \end{center}

\item    composition is as in $\AA$ and $\YY$, so that $(\p,f)$ as above gets composed with
$(\p',f'):(A',p')\lra(A'',p'')$ by the horizontal pasting of diagrams, that is,
$$(\p',f')\circ(\p,f)= (\p'\circ\p,f\circ f').$$

\item the identity morphism of a ${\sf C}(\AA,\PP,\XX)$-object $(A,p)$ is the ${\sf C}(\AA,\PP,\XX)$
-morphism $(1_A,1_{\cod(p)})$.
\end{itemize}

\noindent On the hom-sets of ${\sf C}(\AA,\PP,\XX)$
we define a compatible equivalence relation by
$$(\p,f)\sim(\psi,g)\Longleftrightarrow f=g,$$
for all $(\p,f),(\psi,g):(A,p)\lra(A',p').$ We denote the equivalence class of $(\p,f)$ by $[\p,f]$%(or $[\p,f]_{(A,p),(A',p')}$, if clarity demands it)
, and now let $\BB$ be the quotient category
$$\BB={\sf C}(\AA,\PP,\XX)/\!\sim,$$
{\em i.e.,} $|\BB|=|{\sf C}(\AA,\PP,\XX)|$, ${\rm Mor}(\BB)=\{[\p,f]\st (\p,f)\in {\rm Mor}({\sf C}(\AA,\PP,\XX))\}$ and $$[\p\ap,f\ap]\circ [\p,f]=[\p\ap\circ\p,f\circ f\ap].$$

\noindent Thanks to condition P1 one has the functor $I:\AA\lra\BB$, defined by
$$(\p:A\lra A')\mapsto (\;I\p=[\p,T\p]:(A, 1_{TA})\lra(A',1_{TA'})\;),$$
which is easily seen to be a full embedding.
\end{const}

It is now straightforward to establish a dual equivalence of $\BB$ with $\YY$, as follows:

\begin{theorem}\label{more duality}
Let $\XX$ be a full subcategory of \  $\YY$ and $\PP$ an $\XX$-covering class in $\YY$.
Then $\xymatrix{{\AA}^{\rm op}\ar@/^0.6pc/[rr]^{T} & {\scriptstyle \ep\;\simeq\;\eta} & {\XX}\ar@/^0.6pc/[ll]^{S}}$ has a right lifting $\xymatrix{{\BB}^{\rm op}\ar@/^0.6pc/[rr]^{\tilde{T}} & {\scriptstyle \tilde{\ep}\;\simeq\;\tilde{\eta}} & {\YY}\ar@/^0.6pc/[ll]^{\tilde{S}}}$ along $I$ and $J$, with $J:\XX\hookrightarrow \YY$ the inclusion and $\BB$,
 as well as
 the full embedding $I:\AA\to\BB={\sf C}(\AA,\PP,\XX)/\!\sim$, being defined as above. The lifted dual equivalence may be chosen to satisfy $$\tilde{T}\tilde{S}={\rm Id}_{\YY},\quad
\tilde{\eta}=1_{{\rm Id}_{\YY}},\quad \tilde{T}\tilde{\varepsilon}=1_{\tilde{T}},\quad \tilde{\varepsilon}\tilde{S}=1_{\tilde{S}},$$
and the canonical isomorphism $\g:IS\to\tilde{S}J$ then satisfies  $\tilde{T}\gamma=J\eta$ and $\gamma T\circ I\varepsilon=\tilde{\varepsilon} I$.
\begin{center}
$\xymatrix{\YY\ar[r]_{\tilde{S}}\ar@/^1.0pc/[rr]^{\rm Id_{\YY}}\ar@{}[rd]|{\cong} & \BB^{\rm op}\ar[r]_{\tilde{T}} & \YY\\
            \XX\ar[r]_{S}\ar[u]^{J} & \AA^{\rm op}\ar[r]_{T}\ar[u]_{I} & \XX\ar[u]_{J}}$
            \end{center}
\end{theorem}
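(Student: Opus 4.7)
\bigskip

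\noindent\textbf{Proof plan.} My plan is to define $\tilde{T}$ and $\tilde{S}$ explicitly, letting the projectivity axiom P2 supply all required liftings while the equivalence relation $\sim$ absorbs the resulting non-uniqueness. Every coherence check will then reduce to an equality of the second (i.e., $\YY$-)components of representatives, which is precisely what $\sim$ detects.

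On $\tilde{T}$ I would set $\tilde{T}(A,p):=\cod(p)$ on objects and $\tilde{T}[\p,f]:=f$ on morphisms (well-defined by definition of $\sim$), from which $\tilde{T}I=JT$ follows at once. To build $\tilde{S}$, I would use P3 to select, once and for all, a morphism $p_Y:X_Y\to Y$ in $\PP$ with $X_Y\in|\XX|$ for every $Y\in|\YY|$, choosing $X_Y:=Y$ and $p_Y:=1_Y$ whenever $Y\in|\XX|$ (by P1). Using the remark following Definition \ref{covering class}, I may assume $\PP$ is closed under precomposition with isomorphisms of $\XX$, so that $q_Y:=p_Y\circ\eta_{X_Y}^{-1}:TSX_Y\to Y$ still belongs to $\PP$, and I set $\tilde{S}Y:=(SX_Y,q_Y)$. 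This gives $\tilde{T}\tilde{S}=\mathrm{Id}_{\YY}$ on the nose. For a morphism $g:Y\to Y'$ in $\YY$, P2 applied to the projective $X_Y$, the morphism $g\circ p_Y:X_Y\to Y'$, and $p_{Y'}\in\PP$ produces some $h:X_Y\to X_{Y'}$ with $p_{Y'}\circ h=g\circ p_Y$; declaring $\tilde{S}g:=[Sh,g]$ removes the choice of $h$ thanks to $\sim$ (the required $\BB$-morphism relation is verified via naturality of $\eta$), and functoriality is then mechanical.

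Next I would take $\tilde{\ep}_Y:=1_Y$ and build the counit at $(A,p)$: since $X_Y\in|\XX|$ is projective and $p\in\PP$, P2 supplies an $\ell:X_Y\to TA$ with $p\circ\ell=p_Y$; using that $T:\AA^{\rm op}\to\XX$ is an equivalence and hence fully faithful, I let $\chi\in\AA(A,SX_Y)$ be the unique morphism satisfying $T\chi=\ell\circ\eta_{X_Y}^{-1}$, and set $\tilde{\ep}_{(A,p)}:=[\chi,1_Y]:(A,p)\to(SX_Y,q_Y)$. A symmetric application of P2---this time using the projective $TA$ together with $p_Y\in\PP$ to lift $p$ through $p_Y$---produces an inverse $\BB$-morphism $(SX_Y,q_Y)\to(A,p)$ whose composites with $\tilde{\ep}_{(A,p)}$ both collapse to the appropriate identities in the quotient, so $\tilde{\ep}_{(A,p)}$ is an isomorphism. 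The identities $\tilde{T}\tilde{\ep}=1_{\tilde{T}}$, $\tilde{\ep}\tilde{S}=1_{\tilde{S}}$ and $\tilde{\eta}=1$ are immediate from the definitions; both triangular identities are then trivial, and naturality of $\tilde{\ep}$ is automatic since any two parallel $\BB$-morphisms with the same second component are equal in the quotient.

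Finally, for $Y\in|\XX|$ I would set $\g_Y:=[1_{SY},\eta_Y]:(SY,1_{TSY})\to(SY,\eta_Y^{-1})$, giving the canonical natural isomorphism $\g:IS\to\tilde{S}J$ with $\tilde{T}\g_Y=\eta_Y=J\eta_Y$; the remaining compatibility $\g T\circ I\ep=\tilde{\ep}I$ reduces, via the triangular identity $T\ep_A\circ\eta_{TA}=1_{TA}$ (equivalently $T\ep_A=\eta_{TA}^{-1}$), to an equality of second components alone. The main obstacle I anticipate is purely notational---juggling the op-directions of $T,S,\tilde{T},\tilde{S}$ simultaneously, tracking which component of each $[\p,f]$ lives in which category, and keeping the chosen section $Y\mapsto(X_Y,p_Y)$ coherent with the identity choice on $\XX$. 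The substantive point is that the passage to the quotient by $\sim$ is exactly what turns the existential P2-lifts into a bona fide functor $\tilde{S}$, thereby offsetting the non-functoriality of projective covers that is already flagged in the Introduction.
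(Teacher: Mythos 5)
Your proposal is correct and follows essentially the same route as the paper's own proof: $\tilde{T}$ as the projection onto second components, $\tilde{S}$ built from a chosen family of $\PP$-covers $p_Y$ (normalized to identities on $\XX$) with P2-lifts whose ambiguity is absorbed by the quotient relation $\sim$, the counit $\tilde{\ep}_{(A,p)}=[\chi,1_Y]$ obtained from a P2-lift through $p$, and $\g_X=[1_{SX},\eta_X]$. Your explicit construction of an inverse for $\tilde{\ep}_{(A,p)}$ via a second application of P2, and your observation that naturality reduces to equality of second components, merely spell out steps the paper labels as clear.
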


\begin{proof}
$\tilde{T}$ is given by the projection $[\p,f]\mapsto f$; this trivially gives a faithful functor. With P2 we see that $\tilde{T}$ is full:  given $(A,p:TA\to Y),(A',p:TA'\to Y')\in|\BB|$ and $f:Y'\to Y$ in $\YY$, with P2 one obtains $g:TA'\to TA$ with $p\circ g=f\circ p'$, and then $g$ may be written as $T\p$ with $\p:A\to A'$ in $\AA$ since $T$ is full. To define $\tilde{S}$ on objects, with P3  one chooses for every $Y\in |\YY|$ a morphism $\pi_Y:EY\lra Y$ in $\PP$, with $\pi_X=1_X$ for all $X\in |\XX|$ (according to P1), and then puts $\tilde{S}Y=(SEY,\pi_Y\circ\eta^{-1}_{EY})$. For a morphism $f:Y'\lra Y$ in $\YY$, again, P2 and the fullness of $T$ allow one to choose a morphism $\p_f:SEY\lra SEY'$ in $\AA$ with
$\pi_Y\circ\eta^{-1}_{EY}\circ T\p_f=f\circ\pi_{Y'}\circ\eta^{-1}_{EY'}$; we then put
$\tilde{S}f=[\p_f,f]$. Checking the functoriality of $\tilde{S}$ and the identity $\tilde{T}\tilde{S}={\rm Id}_{\YY}$ is straightforward.

For $(A,p:TA\lra Y)\in|\BB|$ one puts $\tilde{\varepsilon}_{(A,p)}=[\varphi_{(A,p)},1_Y]$, with any $\AA$-morphism $\p_{(A,p)}:A\lra SEY$ satisfying $p\circ T\p_{(A,p)}=\pi_Y\circ\eta^{-1}_{EY}$. Clearly, $\tilde{\varepsilon}$ is, like $\tilde{\eta}=1_{{\rm Id}_{\YY}}$, a natural isomorphism satisfying the claimed identities. By Proposition \ref{lifting pro}(1),
the canonical  natural isomorphism $\gamma:IS\to\tilde{S}J$ must necessarily satisfy $\tilde{T}\gamma=I\eta$ and  $\gamma T\circ I\varepsilon=\tilde{\varepsilon} I$; it is given by $\gamma_X= [1_{SX},\eta_X]$ for all $X\in |\XX|$.
\end{proof}

Let us now look at the special case of $\XX$ being coreflective in $\YY$, so that we have an adjunction $J\dashv E:\YY\to\XX$ with counit $\pi:JE\to{\rm Id}_{\YY}$. For simplicity, and without loss of generality, we may assume that the coreflector $E$ and the counit $\pi$ have been chosen to satisfy $EJ={\rm Id}_{\XX}$ and $\pi J=1_J$. As observed in Remark \ref{coreflective rem}, we can then take the class $\PP=\PP_{\pi}$ to be given by the coreflections $\pi_Y:JEY\to Y,\;Y\in\YY,$ precomposed by any $\XX$-isomorphisms, and apply Construction \ref{const}. The equivalence relation $\sim$ on ${\sf C}(\AA,\PP_{\pi},\XX)$ considered there becomes discrete in this case, so that one may simply consider $\BB={\sf C}(\AA,\PP_{\pi},\XX)$ and the full embedding
$$I:\AA\to\BB,\;(\p:A\lra A')\mapsto (\;I\p=(\p,T\p):(A, 1_{TA})\lra(A',1_{TA'})\;),$$
when applying Theorem \ref{more duality} in this situation. We now obtain the following corollary:

\begin{cor}\label{duality corefl}
Let $J:\XX\hookrightarrow\YY$ be coreflective, with $J\dashv E$ and counit $\pi:JE\to{\rm Id}_{\YY}$.
Then $\xymatrix{{\AA}^{\rm op}\ar@/^0.6pc/[rr]^{T} & {\scriptstyle \ep\;\simeq\;\eta} & {\XX}\ar@/^0.6pc/[ll]^{S}}$ has a right lifting $\xymatrix{{\BB}^{\rm op}\ar@/^0.6pc/[rr]^{\tilde{T}} & {\scriptstyle \tilde{\ep}\;\simeq\;\tilde{\eta}} & {\YY}\ar@/^0.6pc/[ll]^{\tilde{S}}}$ along $I$ and $J$, satisfying
$$\tilde{T}\tilde{S}={\rm Id}_{\YY},\quad
\tilde{\eta}=1_{{\rm Id}_{\YY}},\quad \tilde{T}\tilde{\varepsilon}=1_{\tilde{T}},\quad \tilde{\varepsilon}\tilde{S}=1_{\tilde{S}};$$
here the full embedding
$I:\AA\to\BB={\sf C}(\AA,\PP_{\pi},\XX)$ has a reflector $D\dashv I$ with unit $\rho:{\rm Id}_{\BB}\to ID$, satisfying
$DI={\rm Id}_{\AA},\;\rho I=1_{I}$ and making the diagram
\begin{center}
$\xymatrix{\BB^{\rm op}\ar[d]_{D} & \YY\ar[d]^{E}\ar[l]_{\tilde{S}}\\
            \AA^{\rm op} & \XX\ar[l]^{S}}$
            \end{center}
commute, so that $\xymatrix{{\AA}^{\rm op}\ar@/^0.6pc/[rr]^{T} & {\scriptstyle \ep\;\simeq\;\eta} & {\XX}\ar@/^0.6pc/[ll]^{S}}$ becomes a left lifting of
$\xymatrix{{\BB}^{\rm op}\ar@/^0.6pc/[rr]^{\tilde{T}} & {\scriptstyle \tilde{\ep}\;\simeq\;\tilde{\eta}} & {\YY}\ar@/^0.6pc/[ll]^{\tilde{S}}}$ along $D$ and $E$.
In addition to the canonical isomorphism $\g:IS\to\tilde{S}J$, there is therefore the canonical isomorphism $\beta:E\tilde{T}\to TD$, determined by each of the conditions
$\beta\tilde{S}=\eta E$ and $S\beta\circ \ep D=D\tilde{\ep}$; furthermore, $\beta$ and $\g$ connect $\pi$ and $\rho$ via
$$ \tilde{T}\rho\circ J\beta=\pi\tilde{T},\quad \g E\circ \rho \tilde{S}=\tilde{S}\pi.$$
\end{cor}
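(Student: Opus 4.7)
The plan is to apply Theorem \ref{more duality} to the $\XX$-covering class $\PP_\pi$ of Example \ref{coreflective rem}(2) and then to observe that the quotient by $\sim$ is trivial in this setting. Indeed, if $(\varphi, f), (\psi, f): (A, p) \to (A', p')$ share their second component, then the commutative squares defining them give $p \circ T\varphi = f \circ p' = p \circ T\psi$; since $p$ has the coreflection property (up to an isomorphism in $\XX$), its universal factorization forces $T\varphi = T\psi$, and faithfulness of $T$ yields $\varphi = \psi$. Hence $\BB = {\sf C}(\AA, \PP_\pi, \XX)$ can be used directly, with $I$ sending $\varphi$ to the pair $(\varphi, T\varphi)$.

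For the reflector $D \dashv I$, the key observation is that any $\BB$-morphism $(\varphi, f): (A, p) \to IA' = (A', 1_{TA'})$ is forced to satisfy $f = p \circ T\varphi$, so that the second component is determined by the first. Thus one obtains a natural bijection $\BB((A, p), IA') \cong \AA(A, A')$, showing that $D(A, p) := A$ and $D(\varphi, f) := \varphi$ define a left adjoint to $I$, with unit $\rho_{(A,p)} := (1_A, p)$; the identities $DI = {\rm Id}_\AA$ and $\rho I = 1_I$ are immediate. That $D\tilde{S} = SE$ follows by inspection: with $\tilde{S}Y = (SEY, \pi_Y \circ \eta_{EY}^{-1})$, applying $D$ gives $SEY$, and the first component $\varphi_f$ of $\tilde{S}f$ (chosen in the proof of Theorem \ref{more duality}) unravels---after cancelling $\eta$ and using the universal property of $\pi$---to $S(Ef)$.

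Having thus obtained a left-lifting situation in the sense of Definition \ref{lifting}, Proposition \ref{lifting pro}(2) delivers the canonical natural isomorphism $\beta: E\tilde{T} \to TD$, characterized equivalently by $\beta\tilde{S} = \eta E$ and $S\beta \circ \ep D = D\tilde{\ep}$. Explicitly, at $(A, p)$ with $p: TA \to Y$ in $\PP_\pi$, one sets $\beta_{(A,p)}: EY \to TA$ to be the unique $\XX$-morphism with $p \circ \beta_{(A,p)} = \pi_Y$; that $p \in \PP_\pi$ precisely asserts that this morphism exists and is an isomorphism. The two characterizing equations then follow by a direct computation, using the explicit formulas for $\tilde{S}$ and $\tilde{\ep}$ obtained in the proof of Theorem \ref{more duality}.

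The two remaining compatibility identities fall out by unpacking the definitions. Since $\tilde{T}\rho_{(A,p)} = p$, evaluating $\tilde{T}\rho \circ J\beta$ at $(A, p)$ gives $p \circ \beta_{(A, p)} = \pi_Y$, which is $\pi \tilde T$ at $(A, p)$. For the second identity, using the formula $\gamma_X = (1_{SX}, \eta_X)$ from the proof of Theorem \ref{more duality} and composing in $\BB$, one obtains $(\gamma E \circ \rho\tilde{S})_Y = (1_{SEY}, \eta_{EY}) \circ (1_{SEY}, \pi_Y \circ \eta_{EY}^{-1}) = (1_{SEY}, \pi_Y)$, which matches $(\tilde{S}\pi)_Y$ once one checks that the first component of $\tilde S\pi_Y$ is forced to be $1_{SEY}$ (again by the universal property of $\pi_{EY} = 1_{EY}$). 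The main obstacle throughout is not conceptual but notational: one must carefully track the identifications $TSX \cong X$ via $\eta_X^{-1}$ and $JEY = EY$ as they appear in the various functor compositions.
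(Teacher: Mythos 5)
Your proposal is correct and follows essentially the same route as the paper: invoke Theorem \ref{more duality} with $\PP=\PP_\pi$, observe that $\sim$ is discrete, take $D$ to be the projection with unit $\rho_{(A,p)}=(1_A,p)$, obtain $\beta$ from Proposition \ref{lifting pro}(2) via the commutation $D\tilde{S}=SE$, and verify the compatibility identities by direct computation with the explicit formulas for $\tilde{S}$, $\tilde{\ep}$ and $\gamma$. You in fact spell out a few steps the paper leaves to the reader (the discreteness of $\sim$ via the unique factorization through coreflections, the universality of $\rho_{(A,p)}$, and the computation of $\g E\circ\rho\tilde{S}=\tilde{S}\pi$), all of which check out.
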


\begin{proof}
Assuming, without loss of generality, that $EX=X$ and $\pi_X=1_X$ holds for all $X\in|\XX|$, and writing $(\p,f)$ instead of $[\p,f]$ everywhere in the proof of Theorem \ref{more duality}, one may proceed {\em verbatim} as in that proof, in order to define the lifted dual equivalence $\tilde{S}\dashv\tilde{T}$. (Note that, in the definitions of $\tilde{S}$ and $\tilde{\ep}$, the morphisms $\p_f$ and $\p_{(A,p)}$ are now uniquely determined.) For $D$ one simply takes the projection
$$D:\BB\to\AA,\quad ((\p,f):(A,p)\to(A',p'))\mapsto(\p:A\to A'),$$
so that $DI={\rm Id}_{\AA}$ and $D\tilde{S}=SE$ hold trivially. By the last identity, just as $\g:IS\to\tilde{S}I$ corresponds by adjunction to $J\eta$, one obtains with Proposition \ref{lifting pro} the natural isomorphism $\beta:E\tilde{T}\to TD$ corresponding to $D\tilde{\ep}$ by adjunction and, thus, satisfying the stated identities $\beta\tilde{S}=\eta E$ and $S\beta\circ \ep D=D\tilde{\ep}$. Explicitly, for $(A,p:TA\to Y)\in|\BB|$, the (iso)morphism $\beta_{(A,p)}:EY\to TA$ is the only one with $p\circ\beta_{(A,p)}=\pi_Y$.
One easily confirms that
$$\rho_{(A,p)}=(1_A,p): (A,p)\to(A,1_{TA})=ID(A,p)$$
is an $I$-universal arrow for $(A,p)$ and therefore serves as a unit $\rho:{\rm Id}_{\BB}\to ID$ for the adjunction $D\dashv I$. Now the identity $p\circ\beta_{(A,p)}=\pi_Y$ means $\tilde{T}\rho\circ J\beta=\pi \tilde{T}$. It is also straightforward to confirm the remaining identities $\beta\tilde{S}=\eta E$ and $\g E\circ\rho\tilde{S}=\tilde{S}\pi.$
\end{proof}

\begin{rem}
\rm
As shown in Theorem 3.4 of \cite{DDT}, our Corollary \ref{duality corefl} remains intact in the more general context of Example \ref{coreflective rem}(2), that is, when the full subcategory $\XX$ of $\YY$ admits a pseudo-retraction $E:\YY\to\XX$ and a natural transformation $\pi:JE\to{\rm Id}_{\YY}$ such that $\pi J$ is an isomorphism, provided that $\pi$ has the additional property that $\pi_Y\circ\alpha=\pi_Y$ for an automorphism $\alpha:EY\to EY$ is possible only for $\alpha=1_{EY}$. (In this case, $E$ is just {\em semi-right adjoint} to $J$ in the sense of \cite{medvedev}, that is: generally, only one of the two triangular identities required for an adjunction is assumed to hold.) Under this generalization of Corollary \ref{duality corefl}, one has to return to the construction of the category $\BB$ as given in the proof of Theorem \ref{more duality}, but has the advantage that $E$ and $\pi$ facilitate (not a unique, but still) a functorial choice of a representative in all equivalence classes $[\p,f]$ that form the morphisms of $\BB$, and it then suffices to operate with these representatives.
But as we are not aware of any relevant example that would benefit from this  generalization of Corollary \ref{duality corefl}, we skip any further elaboration of it in this paper.
\end{rem}

We now indicate how to dualize Theorem \ref{more duality} and Corollary \ref{duality corefl}, giving an explicit formulation only in the case of Corollary \ref{duality corefl}, as it will be used in Section 7. In our symbolism, noting that we put the ``op" sign only for categories, the dualization of a dual adjunction
$\xymatrix{{\AA}^{\rm op}\ar@/^0.6pc/[rr]^{T} & {\scriptstyle \ep\;\top\;\eta} & {\XX}\ar@/^0.6pc/[ll]^{S}}$
gives us
$\xymatrix{{\AA}\ar@/^0.6pc/[rr]^{T} & {\scriptstyle \eta\;\bot\;\ep} & {\XX}^{\rm op}\ar@/^0.6pc/[ll]^{S}}.$
Hence, $T$ is now considered as left adjoint to $S$, and the roles of $\eta$ and $\ep$ as unit and counit have been interchanged. A dual equivalence may then be written as
$$\xymatrix{{\AA}\ar@/^0.6pc/[rr]^{T} & {\scriptstyle \eta\;\simeq\;\ep} & {\XX}^{\rm op}\ar@/^0.6pc/[ll]^{S}}.$$
Under this dualization, a right lifting along $I,J$ (as defined in \ref{lifting}) becomes a left lifting along $I,J$, and conversely.

Instead of a full coreflective emdedding $J:\XX\hookrightarrow\YY$, we now have to assume that $\XX$ be reflective in $\YY$, with $E:\YY\to \XX$ being left adjoint to $J$. Since in our primary example the reflection morphisms will be embeddings, we denote the unit of the adjunction $E\dashv J$ by $\iota:{\rm Id}_{\YY}\to JE$, instead of $\pi$ as used in the dual situation. Likewise, in dualizing the class $\PP_{\pi}$ and, thus, switching from a notion of projectivity to a notion of {\em injectivity}, we now use the notation
$\JJ_{\iota}$ for the class of all reflection morphisms $\iota_Y:Y\to EY\;(Y\in|\YY|),$ post-composed by any isomorphisms in $\XX$.

\begin{const}\label{dual construction}
\rm
From the class $\JJ_{\iota}$ as above one builds the category
$${\sf D}(\AA,\JJ_{\iota},\XX),$$
dually to ${\sf C}(\AA,\PP_{\pi},\XX)$, as follows:
\begin{itemize}

\item objects
are pairs $(A,j)$ with $A\in|\AA|$ and $j:Y\to TA$ in the class $\JJ_{\iota}$;

\item  morphisms $(\varphi,f):(A,j)\lra(A',j')$ are given by
morphisms $\varphi:A\lra A'$ in $\AA$ and  $f:Y'\lra Y$ in $\YY$
with  $T\varphi\circ j'=j\circ f$:
\begin{center}
$\xymatrix{TA & TA'\ar[l]_{T\varphi}\\
            Y\ar[u]^{j} & Y'\ar[l]^{f}\ar[u]_{j'}}$
            \end{center}

\item    composition in ${\sf D}(\AA,\JJ_{\iota},\XX)$ proceeds by the horizontal pasting of diagrams;

\item the identity morphism of a ${\sf D}(\AA,\JJ_{\iota},\XX)$-object $(A,j)$ is $(1_A,1_{\dom(j)})$.
\end{itemize}
As in the dual situation, there is a full embedding
$$I:\AA\to\BB={\sf D}(\AA,\JJ_{\pi},\XX) ,\;(\p:A\ra A')\mapsto (\;I\p=(\p,T\p):(A, 1_{TA})\ra(A',1_{TA'})\;).$$
\end{const}

The dualization of Corollary \ref{duality corefl} now reads as follows:

\begin{cor}\label{duality refl}
Let $J:\XX\hookrightarrow\YY$ be reflective, with $E\dashv J$ and unit $\iota:{\rm Id}_{\YY}\to JE$.
Then $\xymatrix{{\AA}\ar@/^0.6pc/[rr]^{T} & {\scriptstyle \eta\;\simeq\;\ep} & {\XX}^{\rm op}\ar@/^0.6pc/[ll]^{S}}$ has a left lifting $\xymatrix{{\BB}\ar@/^0.6pc/[rr]^{\tilde{T}} & {\scriptstyle \tilde{\eta}\;\simeq\;\tilde{\ep}} & {\YY}^{\rm op}\ar@/^0.6pc/[ll]^{\tilde{S}}}$ along $I$ and $J$ satisfying
 $$\tilde{T}\tilde{S}={\rm Id}_{\YY},\quad
\tilde{\eta}=1_{{\rm Id}_{\YY}},\quad \tilde{T}\tilde{\varepsilon}=1_{\tilde{T}},\quad \tilde{\varepsilon}\tilde{S}=1_{\tilde{S}};$$
here the full embedding
$I:\AA\to\BB={\sf D}(\AA,\JJ_{\iota},\XX)$ has a coreflector $D$ with counit $\rho:ID\to{\rm Id}_{\BB}$, satisfying
$DI={\rm Id}_{\AA},\;\rho I=1_{I}$ and making the diagram
\begin{center}
$\xymatrix{\BB\ar[d]_{D} & \YY^{\rm op}\ar[d]^{E}\ar[l]_{\tilde{S}}\\
            \AA & \XX^{\rm op}\ar[l]^{S}}$
            \end{center}
commute, so that $\xymatrix{{\AA}\ar@/^0.6pc/[rr]^{T} & {\scriptstyle \eta\;\simeq\;\ep} & {\XX}^{\rm op}\ar@/^0.6pc/[ll]^{S}}$ is a right lifting of
$\xymatrix{{\BB}\ar@/^0.6pc/[rr]^{\tilde{T}} & {\scriptstyle \tilde{\eta}\;\simeq\;\tilde{\ep}} & {\YY}^{\rm op}\ar@/^0.6pc/[ll]^{\tilde{S}}}$ along $D$ and $E$.
In addition to the canonical isomorphism $\g:IS\to\tilde{S}J$, there is therefore the canonical isomorphism $\beta:E\tilde{T}\to TD$, determined by each of the conditions
$\beta\tilde{S}=\eta E$ and $S\beta\circ \ep D=D\tilde{\ep}$; furthermore, $\beta$ and $\g$ connect $\iota$ and $\rho$ via
$$ J\beta\circ \iota\tilde{T}=\tilde{T}\rho,\quad \rho\tilde{S}=\tilde{S}\iota\circ\g E.$$
\end{cor}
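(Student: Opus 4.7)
The plan is to proceed by a direct dualization of the proof of Corollary \ref{duality corefl}. The data here is the formal dual of the coreflective setup: $T$ is now left adjoint to $S$, the reflection unit $\iota:{\rm Id}_{\YY}\to JE$ plays the role dual to the coreflection counit $\pi:JE\to{\rm Id}_{\YY}$, and the class $\JJ_{\iota}$ of Construction \ref{dual construction} mirrors $\PP_{\pi}$. Because every $\YY$-morphism appearing in the construction gets its direction reversed, I prefer to redo the construction directly so that the bookkeeping stays transparent, rather than merely invoking duality from Corollary \ref{duality corefl}.

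First I would define the functors. Let $\tilde{T}:\BB\to\YY^{\rm op}$ be the projection $(A,j:Y\to TA)\mapsto Y$, $(\p,f)\mapsto f$; functoriality and faithfulness are immediate, and fullness follows from the fullness of $T$ together with the defining condition on morphisms in $\BB$. Next, for each $Y\in|\YY|$ define
\[
\tilde{S}Y\;=\;(SEY,\;\eta_{EY}\circ\iota_Y);
\]
this is legitimate because $\eta_{EY}:EY\to TSEY$ is an $\XX$-isomorphism, so $\eta_{EY}\circ\iota_Y\in\JJ_{\iota}$. For $f:Y'\to Y$ in $\YY$, the fullness of $T$ provides a unique $\AA$-morphism $\p_f:SEY\to SEY'$ whose image under $T$ makes the square required by the definition of a morphism in $\BB$ commute; set $\tilde{S}f=(\p_f,f)$. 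These choices give $\tilde{T}\tilde{S}={\rm Id}_{\YY}$ and $\tilde{\eta}=1_{{\rm Id}_{\YY}}$ on the nose, and the counit $\tilde{\ep}_{(A,j)}=(\p_{(A,j)},1_Y)$ is specified by the unique $\p_{(A,j)}:A\to SEY$ with $T\p_{(A,j)}\circ j=\eta_{EY}\circ\iota_Y$; the identities $\tilde{T}\tilde{\ep}=1_{\tilde{T}}$ and $\tilde{\ep}\tilde{S}=1_{\tilde{S}}$ are then formal.

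For the coreflector, I take $D:\BB\to\AA$ to be the other projection $(\p,f)\mapsto\p$, so that $DI={\rm Id}_{\AA}$ and $D\tilde{S}=SE$ hold strictly. The arrow $\rho_{(A,j)}=(1_A,j):(A,1_{TA})\to(A,j)$ is $I$-universal (any $(\p,f):(A,j)\to I A'$ factors uniquely as $I\p\circ\rho_{(A,j)}$), so it serves as the counit of $D\dashv I$, with $\rho I=1_I$. Because $j\in\JJ_{\iota}$, there is a unique $\XX$-isomorphism $\beta_{(A,j)}:EY\to TA$ with $J\beta_{(A,j)}\circ\iota_Y=j$; its naturality in $(A,j)$ yields the canonical isomorphism $\beta:E\tilde{T}\to TD$ predicted by Proposition \ref{lifting pro}(2), and the identities $\beta\tilde{S}=\eta E$ and $S\beta\circ\ep D=D\tilde{\ep}$ are read off the defining equation for $\beta$ together with the definitions of $\tilde{S}$ and $\tilde{\ep}$.

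The identities $J\beta\circ\iota\tilde{T}=\tilde{T}\rho$ and $\rho\tilde{S}=\tilde{S}\iota\circ\g E$ that connect $\beta$ and $\g$ with $\iota$ and $\rho$ then follow by evaluating both sides on a typical object and applying the defining equation $J\beta_{(A,j)}\circ\iota_Y=j$. I do not expect any serious conceptual obstacle; the only pitfall is notational, namely maintaining discipline about which morphisms live in $\YY$ and which in $\YY^{\rm op}$, and hence in which order to compose them. Once this is attended to, the entire argument of Corollary \ref{duality corefl} transfers verbatim, with $\pi$ replaced by $\iota$ and $\PP_\pi$ by $\JJ_\iota$, delivering the claimed left lifting and all the coherence identities.
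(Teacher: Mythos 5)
Your proposal is correct and follows exactly the paper's route: the paper obtains this corollary by formally dualizing Corollary \ref{duality corefl} (replacing $\pi$ by $\iota$ and $\PP_{\pi}$ by $\JJ_{\iota}$) and merely records the resulting pointwise formulas, which agree with yours. The one slip is in your defining equation for $\tilde{\ep}_{(A,j)}$: a $\BB$-morphism $(A,j)\to(SEY,\eta_{EY}\circ\iota_Y)$ with second component $1_Y$ must satisfy $T\p_{(A,j)}\circ\eta_{EY}\circ\iota_Y=j$, so your equation $T\p_{(A,j)}\circ j=\eta_{EY}\circ\iota_Y$ has the roles of $j$ and $\eta_{EY}\circ\iota_Y$ interchanged and does not even type-check for $\p_{(A,j)}:A\to SEY$ --- precisely the $\YY$-versus-$\YY^{\rm op}$ pitfall you yourself flagged.
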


For completeness, we list the ``pointwise" definitions of $\tilde{T},D,\tilde{S}, \tilde{\ep},\rho, \g,\beta$ appearing in Corollary \ref{duality refl}, as derived by dual adjustment of those given in Corollary \ref{duality corefl}:
\begin{itemize}
\item $\tilde{T}: ((\p,f):(A,j:Y\to TA)\to(A',j':Y'\to TA'))\mapsto (f:Y'\to Y)$;
\item $D: ((\p,f):(A,j)\to(A',j'))\mapsto (\p:A\to A')$;
\item $\tilde{S}: Y\mapsto (SEY,\eta_{EY}\circ\iota_Y),\;(f\!:\!Y'\!\to \!Y)\mapsto (\p_f,f)$, with $T\!\p_f\circ\eta_{EY'}\circ\iota_{Y'}=\eta_{EY}\circ\iota_Y\circ f$;
\item $\tilde{\ep}_{(A,j:Y\to TA)}=(\p_{(A,j)},1_Y)$, with $\p_{(A,j)}:A\to SEY$ satisfying $T\!\p_{(A,j)}\circ\eta_{EY}\circ\iota_Y=j$;
\item $\rho_{(A,j)}=(A,1_{TA})$;
\; $\g_X=(1_{SX},\eta_X)$;
\; $\beta_{(A,j:Y\to TA)}$ is determined by $\beta_{(A,j)}\circ j= \iota_Y$.
\end{itemize}

\section{A new approach to the de Vries duality}
As mentioned in \ref{projective}, with Gleason's Theorem one sees easily that the class $\PP$ of irreducible continuous maps of compact Hausdorff spaces with extremally disconnected domain is an {\bf EKH}-covering class in {\bf KHaus} (as defined in \ref{covering class}). An application of Theorem \ref{more duality} to the restricted Stone duality
\begin{center}
$\xymatrix{\AA^{\rm op}={\bf CBoo}^{\rm op}\ar@/^0.6pc/[rr]^{\quad T\,=\,{\sf Ult}} & {\scriptstyle \simeq} & {\bf EKH}=\XX\ar@/^0.6pc/[ll]^{\quad S\,=\,{\sf CO}}
}$
\end{center}
(see \ref{Stone}) produces the following dual representation of {\bf KHaus}:

\begin{pro}\label{Fed application}
$\xymatrix{\BB^{\rm op}:=({\sf C}(\AA,\PP,\XX)/\!\sim)^{\rm op}\ar@/^0.6pc/[rr]^{\quad\quad\tilde{T}} & {\scriptstyle \simeq} & \quad{{\bf KHaus}=:\YY}\ar@/^0.6pc/[ll]^{\quad\quad\tilde{S}}.
}$
\end{pro}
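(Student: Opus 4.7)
The plan is to apply Theorem \ref{more duality} directly, once we verify that the class $\PP$ of irreducible continuous surjections $p:X\to Y$ in $\bf KHaus$ with extremally disconnected domain $X$ is an $\EKH$-covering class in $\KHaus$ in the sense of Definition \ref{covering class}. Setting $\AA=\CBool$, $\XX=\EKH$, $\YY=\KHaus$, $T=\Ult$ and $S=\CO$, the restricted Stone duality recalled at the end of \ref{Stone} is exactly the dual equivalence to which Theorem \ref{more duality} is to be applied, and the conclusion of that theorem will then give the asserted dual equivalence with $\BB={\sf C}(\AA,\PP,\XX)/\!\sim$. So the entire work lies in checking conditions P1, P2, P3.

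Condition P1 is immediate: for any $X\in|\EKH|$ the identity map $1_X$ is a continuous surjection and trivially irreducible (no proper closed subset of $X$ maps onto $X$), so $1_X\in\PP$. Condition P2 is precisely Gleason's Theorem \cite{Gle}, as recalled in \ref{projective}: extremally disconnected compact Hausdorff spaces are exactly the projective objects of $\KHaus$ with respect to continuous surjections. Hence, given any $p:Y\to Y'$ in $\PP$ (which is, in particular, surjective) and any continuous map $f:X\to Y'$ with $X\in|\EKH|$, projectivity of $X$ yields a continuous $g:X\to Y$ with $p\circ g=f$. Condition P3 expresses the existence of projective covers in $\KHaus$: given any $Y\in|\KHaus|$, the space $EY=\Ult(\RC(Y))$ is extremally disconnected (being a Stone-dual of a complete Boolean algebra, hence an object of $\EKH$), and the canonical map $\pi_Y:EY\to Y$ assigning to an ultrafilter of regular closed sets the unique point in its intersection is a continuous irreducible surjection; this is the standard construction of the absolute, recalled in \ref{projective}.

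With P1--P3 verified, Theorem \ref{more duality} applies verbatim, producing the full embedding $I:\CBool\to\BB={\sf C}(\CBool,\PP,\EKH)/\!\sim$ and a right lifting
\[
\xymatrix{\BB^{\rm op}\ar@/^0.4pc/[rr]^{\tilde T} & {\scriptstyle \simeq} & \KHaus\ar@/^0.4pc/[ll]^{\tilde S}}
\]
of the restricted Stone duality along $I$ and the inclusion $\EKH\hookrightarrow\KHaus$, which is exactly the statement of the proposition. I do not anticipate a genuine obstacle here: the proposition is a direct instantiation of the general machinery in Theorem \ref{more duality}, and all three covering-class axioms are classical facts about absolutes of compact Hausdorff spaces. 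The substantive content of the paper comes later, when one identifies the category $\BB$ constructed this way with $\DHC$ (or rather with ${\bf deVBoo}/\!\sim$) and recovers the de Vries duality.
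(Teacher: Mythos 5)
Your proposal is correct and follows exactly the route the paper takes: it verifies that the class of irreducible surjections with extremally disconnected domain satisfies P1--P3 (via Gleason's Theorem and the standard construction of the absolute $\pi_Y:{\sf Ult}({\sf RC}(Y))\to Y$, both recalled in \ref{projective}) and then invokes Theorem \ref{more duality} applied to the restricted Stone duality between $\bf CBoo$ and $\bf EKH$. The paper compresses this verification into one sentence, so your write-up is simply a more explicit version of the same argument.
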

\noindent Here the objects of the category ${\sf C}(\AA,\PP,\XX)$ are pairs $(A,p\!:\!{\sf Ult}(A)\to Y)$ with a complete Boolean algebra $A$ and $p\in\PP$; morphisms $(\p,f):(A,p)\to(A',p')$ are given by Boolean homomorphisms $\p:A\to A'$ and maps $f$ that make the diagram
\begin{center}
$\xymatrix{{\sf Ult}(A)\ar[d]_{p} & {\sf Ult}(A')\ar[l]_{{\sf Ult}(\varphi)}\ar[d]^{p'}\\
            Y & Y'\ar[l]^{f}}$
            \end{center}
commute (see Construction \ref{const}). We note that such a map $f$ is necessarily continuous and uniquely determined by $\p$ since, as a continuous surjection of compact Hausdorff spaces, the map $p'$ in $\PP$ provides its codomain with the quotient topology of its domain. The projection functor ${\sf C}(\AA,\PP,\XX)^{\rm op}\to {\sf KHaus},\;(\p,f)\mapsto f,$ induces the compatible relation $\sim$ on ${\sf C}(\AA,\PP,\XX)$, so that
$((\p,f)\sim(\psi,g)\Longleftrightarrow f=g)$
for $(\p,f),(\psi,g):(A,p)\to(A',p')$.
 We obtain the quotient category $\BB$, with the same objects as in ${\sf C}(\AA,\PP,\XX)$. The contravariant functor  $\tilde{T}$ is induced by the projection functor; that is:
 $$\tilde{T}:\BB^{\rm op}\to{\bf KHaus},\;([\p,f]:(A,p)\to(A',p'))\mapsto f.$$
 With $\pi_Y: EY={\sf Ult}({\sf RC}(Y))\to Y$ denoting the Gleason cover of a compact Hausdorff space $Y$ (see \ref{projective}), the adjoint $\tilde{S}$ of $\tilde{T}$ as constructed in Theorem \ref{more duality} formally assigns to $Y$ the $\BB$-object $({\sf CO}(EY), \pi_Y\circ\eta\inv_{EY})$ which, however, is naturally isomorphic to $({\sf RC}(Y),\pi_Y)$; here $\eta$ and $\ep$ are as in \ref{Stone}:
 \begin{center}
 $\xymatrix{EY={\sf Ult(RC}(Y))\ar[dd]_{\pi_Y} & & {\sf Ult(CO}(EY))\ar[d]^{\eta\inv_{EY}}\ar[ll]_{\quad{\sf Ult}(\ep_{{\sf RC}(Y)})}\\
& & EY\ar[d]^{\pi_Y}  \\
 Y & & Y\ar[ll]_{1_Y}
 	}$
 \end{center}
 We may therefore assume $\tilde{S}(Y)=({\sf RC}(Y),\pi_Y)$
 and $\tilde{S}(f)=[\p_f,f]$ for $f:Y\ap\to Y$ in ${\bf KHaus}$, where $\p_f:{\sf RC}(Y)\to {\sf RC}(Y\ap)$ is a Boolean homomorphism such that
 $\pi_Y\circ {\sf Ult}(\p_f)=f\circ\pi_{Y\ap}$.
 This
  leaves all assertions of Theorem \ref{more duality} in tact. In fact, it simplifies them since we now have that the natural isomorphism $\gamma$ of Theorem \ref{more duality} is actually an identity transformation: $IS=\tilde{S}J$.

  In order to show that {\bf KHaus} is dually equivalent to the category {\bf deV}
(see \ref{deVries}),
we just need to exhibit {\bf deV} as equivalent to the category $\BB$ of Proposition \ref{Fed application}.
We start by proving a technical lemma which generalizes \cite[Proposition 1.5.4]{deV}.

  \begin{defi}\label{devriestransf}
\rm
For complete normal contact (= de Vries) algebras $A$ and $A'$ and a monotone function $\varphi:A\to A'$, we call the function
 $\p^{\vee}:A\lra A'$, defined by
 $$\p^{\vee}(a):= \bigvee\{\p(b)\st b\ll a\}$$
 for every $a\in A$, the {\em de Vries transform} of $\p$.
\end{defi}
With the pointwise order of functions of ordered sets, one notes immediately that
	(1) $\p^{\vee}$ is monotone; (2) $\p^{\vee}\leq \p$; (3) if $\p\leq\tilde{\p}$, then $\p^{\vee}\leq\tilde{\p}^{\vee}$. Less trivially one has:

\begin{lm}\label{mainlm2}
Let $A$, $A'$, $A''$ be de Vries algebras. If the monotone function $\p:A\to A'$ preserves the relation $\ll$,
then one has the implication
$$b\ll a\;\Longrightarrow\; \p(b)\ll\p^{\vee}(a)$$
for all $a,b\in A$; furthermore, for every monotone function  $\psi:A'\lra A''$ one obtains
$$(\psi\circ\p)^{\vee}=(\psi^{\vee}\circ  \p^{\vee})^{\vee}.$$
\end{lm}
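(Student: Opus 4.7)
The plan is to prove the two claims in turn, using the first as a lemma in the proof of the second. For the first claim, given $b\ll a$, I would apply the interpolation property (I\,5) to produce $c\in A$ with $b\ll c\ll a$. Since $\varphi$ preserves $\ll$, this gives $\varphi(b)\ll\varphi(c)$. Because $c\ll a$, the element $\varphi(c)$ lies in the family whose supremum defines $\varphi^{\vee}(a)$, so $\varphi(c)\leq\varphi^{\vee}(a)$. Axiom (I\,2) then upgrades the preceding non-tangential inclusion to $\varphi(b)\ll\varphi^{\vee}(a)$, as required.

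For the second claim I would prove the two inequalities $(\psi\circ\varphi)^{\vee}\leq(\psi^{\vee}\circ\varphi^{\vee})^{\vee}$ and $(\psi^{\vee}\circ\varphi^{\vee})^{\vee}\leq(\psi\circ\varphi)^{\vee}$ pointwise. For the first inequality at $a\in A$, fix $b\ll a$ and interpolate $b\ll c\ll a$ via (I\,5). The first claim applied to $b\ll c$ yields $\varphi(b)\ll\varphi^{\vee}(c)$, so $\psi(\varphi(b))$ appears in the supremum defining $\psi^{\vee}(\varphi^{\vee}(c))$; hence
$$\psi(\varphi(b))\;\leq\;\psi^{\vee}(\varphi^{\vee}(c))\;\leq\;(\psi^{\vee}\circ\varphi^{\vee})^{\vee}(a),$$
the latter because $c\ll a$. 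Taking the join over all $b\ll a$ produces the desired inequality.

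For the second (reverse) inequality, the key general observation is that for any monotone $f$ between de Vries algebras one has $f^{\vee}\leq f$ pointwise, since $y\ll x$ forces $y\leq x$ and hence $f(y)\leq f(x)$ by monotonicity. Applying this to $\varphi$ and $\psi$, for each $b\ll a$ I obtain
$$\psi^{\vee}(\varphi^{\vee}(b))\;\leq\;\psi(\varphi^{\vee}(b))\;\leq\;\psi(\varphi(b)),$$
where the last step uses monotonicity of $\psi$ and $\varphi^{\vee}(b)\leq\varphi(b)$. Since $b\ll a$, this is bounded by $(\psi\circ\varphi)^{\vee}(a)$, and taking the join over $b\ll a$ yields the claim.

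I would flag the first (direct) inequality of the second claim as the principal obstacle: it is the only step where the hypothesis that $\varphi$ preserves $\ll$ is used essentially, precisely via the first claim in order to push $\varphi(b)$ strictly below $\varphi^{\vee}(c)$ and thereby feed it into the supremum defining $\psi^{\vee}$. Without this preservation hypothesis, the joins on the right-hand side of the identity could a priori be too small to dominate those on the left, and the whole identity would collapse. The reverse inequality, by contrast, is purely formal, resting only on the general inequality $f^{\vee}\leq f$ for monotone $f$ and on monotonicity of $\psi$.
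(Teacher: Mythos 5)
Your proof is correct and follows essentially the same route as the paper's: interpolation via (I\,5) plus $\ll$-preservation for the first claim, the first claim feeding $\p(b)$ into the supremum defining $\psi^{\vee}(\p^{\vee}(c))$ for the hard inequality, and the pointwise inequality $f^{\vee}\leq f$ together with monotonicity for the easy one (which the paper packages as $\psi^{\vee}\circ\p^{\vee}\leq\psi\circ\p$ followed by monotonicity of $(-)^{\vee}$). No gaps.
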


\begin{proof}
For the first assertion, given $b\ll a$ in $A$, one picks $c\in A$ with $b\ll c\ll a$ and obtains
$\p(a)\ll\p(c)\leq\p^{\vee}(b)$, by the $\ll$-preservation of $\p$ and the definition of $\p^{\vee}(b)$.

For the second assertion, exploiting the properties (1-3) above, one first observes $\psi^{\vee}\circ\p^{\vee}\leq\psi\circ \p$ and concludes $(\psi^{\vee}\circ\p^{\vee})^{\vee}\leq(\psi\circ \p)^{\vee}$.
To prove ``$\geq$'', considering any $b\ll a$ in $A$, we again pick $c\in A$ with $b\ll c\ll a$ and obtain with the first assertion $\p(b)\ll\p^{\vee}(c)$ which, by definition of $\psi^{\vee}(\p^{\vee}(c))$ and $(\psi^{\vee}\circ\p^{\vee})^{\vee}(a)$, gives us
$$(\psi\circ\p)(b)=\psi(\p(b))\leq\psi^{\vee}(\p^{\vee}(c))=(\psi^{\vee}\circ\p^{\vee})(c)\leq(\psi^{\vee}\circ\p^{\vee})^{\vee}(a).$$
Thus, taking the join over all $b\ll a$, we see $(\psi\circ \p)^{\vee}(a)\leq (\psi^{\vee}\circ\p^{\vee})^{\vee}(a)$, as desired.
\end{proof}

One routinely verifies that the composition in the category $\bf deV$ of de Vries algebras, given by (see \ref{deVriesAlgebraic})
$$\psi\diamond\p=(\psi\circ\p)^{\vee},$$  stays in the range of de Vries morphisms. Now we form the auxiliary category
$${\bf deVBoo}$$
whose objects are the same as those of {\bf deV}, {\em i.e.}, are complete normal contact algebras; a morphism $\p: A\to A'$ in {\bf deVBoo} is a {\em Boolean} homomorphism reflecting the contact relation or, equivalently, preserving the associated relation $\ll$ (see \ref{deVriesAlgebraic}). Unlike $\diamond$ in $\bf deV$, the composition of ${\bf deVBoo}$-morphisms proceeds by ordinary map composition. With the map $\varepsilon_A:A\to{\sf CO(Ult}(A))$ defined as in \ref{Stone}, we first show:

\begin{pro}\label{UV}
There are functors
\begin{center}
$\xymatrix{{\sf C}(\AA,\PP,\XX)\ar[r]^{\;\;U} & {\bf deVBoo}\ar[r]^{V} & {\bf deV},}$
\end{center}
where $V$ maps objects identically and $U$ provides the complete Boolean algebra $A$ belonging to an object $(A,p)$ in ${\sf C}(\AA,\PP,\XX)$ with the normal contact relation
$$a\smallfrown_p b\iff p(\varepsilon_A(a))\cap p(\varepsilon_A(b))\neq \emptyset\iff\exists\,\mathfrak u, \mathfrak v\in{\sf Ult(A)}:a\in\mathfrak u,b\in\mathfrak v,p(\mathfrak u)=p(\mathfrak v).$$
\end{pro}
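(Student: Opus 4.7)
The approach is to let $U$ act on morphisms merely by forgetting $f$ (keeping the Boolean homomorphism $\p$), and to let $V$ act on morphisms by the de Vries transform $V(\p):=\p^\vee$ of Definition \ref{devriestransf}.

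To see that $(A,\smallfrown_p)$ is a de Vries algebra, I would invoke Alexandroff's Theorem (as recalled in \ref{projective}). Since $A$ is complete, $X = {\sf Ult}(A)$ is extremally disconnected, so $\varepsilon_A$ identifies $A$ with ${\sf RC}(X) = {\sf CO}(X)$; and because $p\in\PP$ is a closed irreducible surjection of compact Hausdorff spaces, $\rho_p:{\sf RC}(X)\to{\sf RC}(Y)$, $H\mapsto p(H)$, is a Boolean isomorphism. The composite Boolean isomorphism $A\cong {\sf RC}(Y)$ carries the standard contact relation on ${\sf RC}(Y)$ (which is normal because $Y$ is compact Hausdorff) to $\smallfrown_p$ on the nose; the equivalent ultrafilter description in the statement is then routine. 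For $U$ on a morphism $(\p,f):(A,p)\to(A',p')$, the Boolean homomorphism $\p$ must only be shown to reflect the contact relation. This follows by chasing the commutative square $p\circ{\sf Ult}(\p) = f\circ p'$: ultrafilters $\mathfrak u',\mathfrak v'$ of $A'$ witnessing $\p(a)\smallfrown_{p'}\p(b)$ with $p'(\mathfrak u')=p'(\mathfrak v')$ are carried by ${\sf Ult}(\p)$ to ultrafilters $\mathfrak u,\mathfrak v$ of $A$ that contain $a,b$ respectively and satisfy $p(\mathfrak u)=p(\mathfrak v)$. Functoriality of $U$ is then immediate from the definition of composition in ${\sf C}(\AA,\PP,\XX)$.

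For $V(\p)=\p^\vee$, properties V1 and V4 are routine: V1 is trivial, while V4, namely $(\p^\vee)^\vee=\p^\vee$, reduces directly to the interpolation axiom I5. For V2, distributivity of $\we$ over arbitrary joins in the complete Boolean algebra $A'$, combined with the fact that $c\ll a$ and $d\ll b$ force $c\we d\ll a\we b$ (a consequence of C4), yields $\p^\vee(a\we b)=\p^\vee(a)\we\p^\vee(b)$. The critical step is V3: given $a^*\ll b$, interpolate $a^*\ll e\ll b$ and use the dual formula $\p^\vee(a)^*=\bigwedge\{\p(c)\st a^*\ll c\}\leq \p(e)$ together with Lemma \ref{mainlm2}, which gives $\p(e)\ll\p^\vee(b)$, to conclude $\p^\vee(a)^*\ll\p^\vee(b)$ by I2. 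Functoriality of $V$ then follows smoothly: $V(1_A)=1_A$ is exactly condition C6 of normality, and $V(\psi\circ\p)=V(\psi)\diamond V(\p)$ is precisely the second assertion of Lemma \ref{mainlm2}. I expect the main obstacle to be the verification of V3, since $\p$ only reflects $\smallfrown$ and is not assumed sup-preserving, so the relation $\ll$ does not transfer through $\p$ directly; the cure is the combined use of Lemma \ref{mainlm2} and interpolation.
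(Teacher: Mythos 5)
Your proposal is correct and follows essentially the same route as the paper's proof: the object part via Alexandroff's Theorem and transport of the contact relation along $\rho_p\circ\varepsilon_A$, the reflection of contact by $\p$ via the commutative square, and the verification of (V1--V4) for $\p^{\vee}$ together with functoriality of $V$ from $a=\bigvee\{b\st b\ll a\}$ and Lemma \ref{mainlm2}. The only cosmetic difference is in (V3), where you interpolate once and invoke the first assertion of Lemma \ref{mainlm2}, whereas the paper interpolates twice ($a^*\ll c\ll d\ll b$) and uses the $\ll$-preservation of $\p$ directly; both arguments are sound.
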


\begin{proof} First we confirm that $\smallfrown_p$ as defined makes $A$ a complete normal contact algebra, for $(A,p:{\sf Ult}(A)\to Y)\in |{\sf C}(\AA,\PP,\XX)|$. Indeed, by Alexandroff's Theorem (see \ref{projective}), the map $\rho_p: {\sf RC(Ult}(A))\to {\sf RC}(Y),\;H\mapsto p(H),$
is a Boolean isomorphism, and so is $\varepsilon_A:A\to{\sf CO(Ult}(A))={\sf RC(Ult}(A))$, by the Stone duality. Hence, $A$ is, like ${\sf RC}(Y)$, a complete normal contact algebra, since the contact relation $\smallfrown_p$ has been obtained by transferring the contact relation of ${\sf RC}(Y)$ along the (inverse of the) isomorphism $\rho_p\circ\varepsilon_A$.

Defining $U$ on morphisms by $[(\p,f):(A,p)\to(A',p')]\longmapsto \p$, we must just show that $\p:A\to A'$ reflects the contact relations imposed by $U$. That, however, is obvious: if $\p(a)\smallfrown\p(b)$ in $A'$, then $p'(\mathfrak u')=p'(\mathfrak v')$ for some $\mathfrak u', \mathfrak v'\in{\sf Ult}(A')$ with $\p(a)\in\mathfrak u', \p(b)\in\mathfrak v'$; consequently,
$p(\p^{-1}(\mathfrak u'))=f(p'(\mathfrak u'))=f(p'(\mathfrak v'))=p(\p^{-1}(\mathfrak v')),$
which implies $a\smallfrown b$.

We define $V$ on a morphism $\p:A\to B$ in {\bf deVBoo} by
$V\p:=\p^{\vee}$
and must first confirm that the map $\p^{\vee}:A\to B$ satisfies conditions (V1-4) of \ref{deVriesAlgebraic}.
It is straightforward to show that, since $\p$ satisfies (V2), the map
 $\p^{\vee}$ satisfies (V2) and (V4).
  Condition (V1) holds trivially for $\p^{\vee}$. Hence, we are left with having to confirm that $\p^{\vee}$ satisfies (V3).
 Given $a,b\in A$ and $a^*\ll b$, we find $c,d\in A$ such that $a^*\ll c\ll d\ll b$. Since $\p$ preserves the Boolean negation and the relation $\ll$, we have

\medskip

\begin{tabular}{lll}
$(\p^{\vee}(a))^*$&$=(\bigvee\{\p(e)\st e\in A, e\ll a\})^*$ \\
&$=\bigwedge\;\{\p(e)^*\st e\in A,  e\ll a\}$ \\
&$=\bigwedge\;\{\p(e^*)\st e\in A,  e\ll a\}$ \\
&$=\bigwedge\;\{\p(e)\st e\in A,  a^*\ll e\}\;\le \p(c)\ll \p(d)\,\le \p^{\vee}(b)$.\\
 \end{tabular}

 \medskip

One has $a=\bigvee\{b\in A\st b\ll a\}$ for all $a\in A$, so that $V$ preserves the identity map on $A$.
 The preservation by $V$ of the composition follows from Lemma \ref{mainlm2}.
 \end{proof}

\begin{nist}\label{pupv}
\rm Note that if, $(A,p:{\sf Ult}(A)\to Y)\in |{\sf C}(\AA,\PP,\XX)|$, then for all $\mathfrak{u},\mathfrak{v}\in{\sf Ult}(A)$,
$$\mathfrak{u}\smallfrown_p \mathfrak{v}\Longleftrightarrow p(\mathfrak{u})=p(\mathfrak{v}).$$
Indeed, the implication ``$\Longrightarrow$'' follows easily from the Hausdorffness of $Y$ and the fact that $\{\ep_A(a)\st a\in A\}$ is an open base for ${\sf Ult}(A)$, while  the converse implication is obvious.
\end{nist}

\begin{pro}\label{U equivalence}
The functor $U$ has a right inverse $W:{\bf deVBoo}\to {\sf C}(\AA,\PP,\XX)$ with $W\circ U\cong \rm{Id}$. In particular, $U$ is an equivalence of categories.	
\end{pro}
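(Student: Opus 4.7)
The strategy is to build a canonical right inverse $W$ using the space of clusters. For a de Vries algebra $A$, I would set $W(A) = (A, p_A)$, where
$$p_A:{\sf Ult}(A)\lra {\sf Clust}(A),\quad \mathfrak{u}\longmapsto \mathfrak{c}_{\mathfrak{u}}.$$
By the de Vries duality recalled in Section~\ref{deVries}, the map $\tau_A:A\to{\sf RC}({\sf Clust}(A))$ is an isomorphism of de Vries algebras, so ${\sf Ult}(A)\cong {\sf Ult}({\sf RC}({\sf Clust}(A))) = E({\sf Clust}(A))$, and a short calculation (using that $\bigcap_{a\in\mathfrak{u}}\tau_A(a)=\{\mathfrak{c}_{\mathfrak{u}}\}$) shows that under this identification $p_A$ corresponds to the Gleason projection $\pi_{{\sf Clust}(A)}$; hence $p_A\in\PP$.

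For a morphism $\varphi:A\to A'$ in ${\bf deVBoo}$, I would define $W(\varphi):=(\varphi,f_\varphi)$ with
$$f_\varphi:{\sf Clust}(A')\lra{\sf Clust}(A),\quad \mathfrak{c}_{\mathfrak{v}'}\longmapsto\mathfrak{c}_{\varphi^{-1}(\mathfrak{v}')}.$$
Since $\varphi$ is a Boolean homomorphism, $\varphi^{-1}(\mathfrak{v}')$ is an ultrafilter of $A$, and since $\varphi$ reflects the contact relation, $\mathfrak{v}_1'\smallfrown\mathfrak{v}_2'$ implies $\varphi^{-1}(\mathfrak{v}_1')\smallfrown\varphi^{-1}(\mathfrak{v}_2')$, so $f_\varphi$ is well defined on cluster classes. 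The equation $p_A\circ{\sf Ult}(\varphi)=f_\varphi\circ p_{A'}$ holds by construction, and continuity of $f_\varphi$ is then automatic because $p_{A'}$ is a continuous surjection of compact Hausdorff spaces and therefore a topological quotient. Functoriality of $W$ reduces to the identity $\varphi^{-1}\circ\psi^{-1}=(\psi\circ\varphi)^{-1}$.

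The identity $U\circ W={\rm Id}_{{\bf deVBoo}}$ on objects is precisely the cluster characterization of contact given in Section~\ref{deVries}: for all $a,b\in A$,
$$a\smallfrown_{p_A} b\iff\exists\,\mathfrak{u},\mathfrak{v}\in{\sf Ult}(A):\ a\in\mathfrak{u},\ b\in\mathfrak{v},\ \mathfrak{c}_{\mathfrak{u}}=\mathfrak{c}_{\mathfrak{v}}\iff a\smallfrown b,$$
while on morphisms it is immediate.

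For the natural isomorphism $WU\cong{\rm Id}_{{\sf C}(\AA,\PP,\XX)}$, given an object $(A,p:{\sf Ult}(A)\to Y)$, I would define
$$h_{(A,p)}:Y\lra{\sf Clust}(A,\smallfrown_p),\quad p(\mathfrak{u})\longmapsto\mathfrak{c}_{\mathfrak{u}}.$$
This is well defined and injective by \ref{pupv}, surjective because every cluster has the form $\mathfrak{c}_{\mathfrak{u}}$, and continuous since $p$ is a quotient map with $h_{(A,p)}\circ p=p_A$ continuous; a continuous bijection between compact Hausdorff spaces is a homeomorphism. Then $\eta_{(A,p)}:=(1_A,h_{(A,p)}):(A,p_A)\to(A,p)$ is an isomorphism in ${\sf C}(\AA,\PP,\XX)$, and naturality with respect to $(\varphi,f):(A,p)\to(A',p')$ reduces to checking $h_{(A,p)}\circ f=f_\varphi\circ h_{(A',p')}$, which follows at once from $p(\varphi^{-1}(\mathfrak{v}'))=f(p'(\mathfrak{v}'))$. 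Combining $UW={\rm Id}$ and $WU\cong{\rm Id}$ shows that $U$ is an equivalence with quasi-inverse $W$. The chief subtlety, and the step that most warrants care, is keeping track of the (topologically contravariant) morphism convention of Construction~\ref{const} when writing the naturality square.
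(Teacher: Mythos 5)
Your construction is, up to the canonical bijection $[\mathfrak u]\mapsto\mathfrak c_{\mathfrak u}$ between ${\sf Ult}(A)/\!\smallfrown$ and ${\sf Clust}(A)$, the same as the paper's: the paper sets $WA=(A,\pi_A:{\sf Ult}(A)\to{\sf Ult}(A)/\!\smallfrown)$, defines $W\p=(\p,f_\p)$ by the same formula $\mathfrak c_{\mathfrak v'}\mapsto\mathfrak c_{\p^{-1}(\mathfrak v')}$, and exhibits the natural isomorphism $WU\cong{\rm Id}$ by the same induced homeomorphism on codomains. Those parts of your argument are fine (and your observation that reflection of $\smallfrown$ by $\p$ immediately gives preservation of $\smallfrown$ by ${\sf Ult}(\p)$ is correct).

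The genuine problem is the step ``$p_A\in\PP$.'' You justify it by invoking the de Vries duality -- specifically, that $\tau_A:A\to{\sf RC}({\sf Clust}(A))$ is an isomorphism, so that ${\sf Ult}(A)$ is identified with the Gleason cover of ${\sf Clust}(A)$. But Proposition \ref{U equivalence} is a stepping stone (via Theorem \ref{modified deVries}) in the paper's \emph{new proof} of the de Vries duality; the isomorphy of $\tau_A$ is exactly the object-level content of that duality and is only established later, in Proposition \ref{full prop}(1)--(2), whose proof itself uses the irreducibility of $\pi_A$. So your argument is circular in the context of this paper, and even the preliminary facts you need -- that ${\sf Clust}(A)$ is a compact Hausdorff space (used again when you declare $h_{(A,p)}$ a continuous bijection between compact Hausdorff spaces) and that $p_A$ is an irreducible surjection onto it -- are not available at this stage from Section \ref{deVries} alone. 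What is actually needed, and all that is needed, is the elementary fact (Lemma 4.4 of \cite{DDT}, proved directly from the normal contact algebra axioms) that $\smallfrown$ is an equivalence relation on ${\sf Ult}(A)$, that the quotient ${\sf Ult}(A)/\!\smallfrown$ is compact Hausdorff, and that the projection $\pi_A$ is irreducible; this puts $\pi_A$ (equivalently, your $p_A$) into $\PP$ without any appeal to the duality being proved. Replace your appeal to the de Vries duality by that lemma and the rest of your proof goes through.
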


\begin{proof}
With the contact relation $\smallfrown$ of an object $A$ in ${\bf deVBoo}$ extended to an equivalence relation on ${\sf Ult}(A)$ as in \ref{deVries}, one puts
$$WA=(A,\,\pi_A:{\sf Ult}(A)\to {\sf Ult}(A)/\!\smallfrown),$$
where $\pi_A$ is the canonical projection
 That $\pi_A$ is an irreducible continuous map in ${\bf KHaus}$ with extremally disconnected domain and, thus, belongs to $\PP$, has been confirmed in Lemma 4.4 of \cite{DDT}. Hence, $WA$ is indeed a ${\sf C}(\AA,\PP,\XX)$-object; moreover, as $\pi_A$
  induces the original contact relation on $A$ (see Proposition \ref{UV} and \ref{deVries}), one has $U(WA)=A$.

For a morphism $\p:A\to A' $ in $\bf deVBoo$, reflection of $\smallfrown$ by $\p$ implies preservation of $\smallfrown$ by ${\sf Ult}(\p)$; indeed, as shown in Proposition 4.6 of \cite{DDT}, this follows quite easily from the Hausdorffness of $Y={\sf Ult}(A)/\!\smallfrown$. Hence, with $Y'={\sf Ult}(A')/\!\smallfrown$ one obtains a uniquely determined continuous map $f_{\p}$ making the diagram
\begin{center}
$\xymatrix{{\sf Ult}(A)\ar[d]_{\pi_A} & {\sf Ult}(A')\ar[l]_{{\sf Ult}(\varphi)}\ar[d]^{\pi_{A'}}\\
            Y & Y'\ar[l]_{f_{\p}}}$
            \end{center}
commute; explicitly,
$f_{\p}(\pi_{A'}(\mathfrak u'))=\pi_A(\p\inv(\mathfrak u'))$, for all $\mathfrak u'\in{\sf Ult}(A')).$
Obviously then, with $W\p=(\p,f_{\p})$, we obtain a well-defined functor $W$. Since, trivially, $U(W\p)=\p$ for all $\p$, one has $U\circ W={\rm Id}_{\bf deVBoo}$.

Finally, for $(A,p)\in|{\sf C}(\AA,\PP,\XX)|$, $U$ provides $A$ with the contact relation $\smallfrown_p$, and then $W$ provides $A$ with the projection $\pi_A$. We may define $$\la_{(A,p)}: W(U(A,p))=(A,\pi_A)\to(A,p),\quad\la_{(A,p)}=(1_A,\ell_p),$$
with $\ell_p$ determined by the commutative diagram
\begin{center}
$\xymatrix{{\sf Ult}(A)\ar[d]_{\pi_A} & {\sf Ult}(A)\ar[l]_{{\sf Ult}(1_A)}\ar[d]^{p}\\
           {\sf Ult}A)/\!\smallfrown_{p}  & Y\ar[l]_{\quad\quad\ell_{p}}}$
            \end{center}
Quite trivially, $\ell_p$ is a homeomorphism, making $\lambda_{(A,p)}$ an isomorphism in ${\sf C}(\AA,\PP,\XX)$, which is obviously natural in $(A,p)$.
\end{proof}

Unlike $U$, the functor $V$ of Proposition \ref{UV} is {\em not} an equivalence of categories. But it induces an equivalence relation on $\bf deVBoo$ that is compatible with the equivalence relation $\sim$ on ${\sf C}(\AA,\PP,\XX)$, as considered in Proposition \ref{Fed application}. Concretely:

\begin{lm}\label{UV2}
For all morphisms $(\p,f),(\psi,g):(A,p)\to(A',p')$ in ${\sf C}(\AA,\PP,\XX)$, one has $f=g$ if, and only if, $V\p=V\psi$; that is: $(f=g\iff \p^{\vee}=\psi^{\vee})$.
\end{lm}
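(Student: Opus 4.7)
The plan is to reduce the equation $f=g$ to a purely order-theoretic condition on $\p$ and $\psi$ that can then be matched with $\p^{\vee}=\psi^{\vee}$. Since $p'\in\PP$ is irreducible and hence surjective, the commutativity $p\circ{\sf Ult}(\p)=f\circ p'$ (and analogously for $\psi,g$) forces $f(p'(\mathfrak u'))=p(\p^{-1}(\mathfrak u'))$ and $g(p'(\mathfrak u'))=p(\psi^{-1}(\mathfrak u'))$ for every $\mathfrak u'\in{\sf Ult}(A')$. So $f=g$ amounts to $p(\p^{-1}(\mathfrak u'))=p(\psi^{-1}(\mathfrak u'))$ for every $\mathfrak u'$, which by \ref{pupv} is equivalent to $\p^{-1}(\mathfrak u')\smallfrown_p\psi^{-1}(\mathfrak u')$ for every $\mathfrak u'$. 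Unfolding this via the definition of $\smallfrown$ on ultrafilters, and observing that an ultrafilter of $A'$ containing both $\p(a)$ and $\psi(b)$ exists exactly when $\p(a)\we\psi(b)\neq 0$, the condition becomes: ``for all $a,b\in A$, $\p(a)\we\psi(b)\neq 0$ implies $a\smallfrown_p b$''. Contraposing and substituting $c:=b^{*}$ (using that $\p,\psi$ are Boolean homomorphisms) yields the clean algebraic condition
$$(\ast)\quad \forall\, a,c\in A:\ a\ll_p c\ \Longrightarrow\ \p(a)\leq\psi(c),$$
which by the symmetry of $\smallfrown_p$ simultaneously yields $\psi(a)\leq\p(c)$ whenever $a\ll_p c$.

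It then remains to prove $(\ast)\Leftrightarrow \p^{\vee}=\psi^{\vee}$. For the forward direction, given $a\ll_p c$, I would use the interpolation axiom (I\,5) to pick $b$ with $a\ll_p b\ll_p c$; then $(\ast)$ yields $\p(a)\leq\psi(b)\leq\psi^{\vee}(c)$, the last inequality being definitional. Taking the join over all $a\ll_p c$ gives $\p^{\vee}(c)\leq\psi^{\vee}(c)$, and the symmetric form of $(\ast)$ yields the reverse inequality. Conversely, assuming $\p^{\vee}=\psi^{\vee}$, Lemma \ref{mainlm2} applied to the $\ll$-preserving Boolean homomorphism $\p$ gives $\p(a)\ll\p^{\vee}(c)=\psi^{\vee}(c)$ whenever $a\ll_p c$, whence $\p(a)\leq\psi^{\vee}(c)\leq\psi(c)$; the last inequality holds because $b\ll_p c$ implies $b\leq c$ by (I\,1), hence $\psi(b)\leq\psi(c)$ by monotonicity, so the join $\psi^{\vee}(c)$ is bounded above by $\psi(c)$.

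The only delicate step is the translation, in the first paragraph, of the ultrafilter-level condition into the purely algebraic form $(\ast)$: one has to see that surjectivity of $p'$ plus \ref{pupv} converts the diagrammatic equality $f=g$ into a statement about ultrafilter contact, and that the standard ultrafilter-extension argument then collapses this to the order-theoretic implication $(\ast)$. Once $(\ast)$ is in hand, the interpolation axiom and Lemma \ref{mainlm2} bridge it mechanically to the equality of the de Vries transforms.
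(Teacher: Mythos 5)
Your proof is correct and follows essentially the same route as the paper's: both reduce $f=g$ to the ultrafilter-contact condition $\p^{-1}(\mathfrak u')\smallfrown_p\psi^{-1}(\mathfrak u')$ via the surjectivity of $p'$ and the observation in \ref{pupv}, and both then rely on the same two ingredients --- the existence of an ultrafilter containing a nonzero element $\p(a)\wedge\psi(c^*)$, and interpolation $a\ll b\ll c$ --- so that your condition $(\ast)$ is exactly what the paper establishes implicitly inside its two proofs by contradiction. The only difference is organizational: you isolate $(\ast)$ as an explicit equivalent reformulation before matching it with $\p^{\vee}=\psi^{\vee}$ (and your appeal to Lemma \ref{mainlm2} in the converse direction is harmless overkill, since $\p(a)\leq\p^{\vee}(c)$ for $a\ll c$ is immediate from the definition of the join).
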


\begin{proof}
Under the hypothesis $f=g$ one has $p\circ{\sf Ult}(\p)=f\circ p'=g\circ p'=p\circ{\sf Ult}(\psi)$, which means $\p\inv(\mathfrak u')\smallfrown_p\psi\inv(\mathfrak u')$ for all $\mathfrak u'\in {\sf Ult}(A')$. To show $\p^{\vee}(a)\leq\psi^{\vee}(a)$ for $a\in A$, it suffices to prove that for all $b\ll a$ there is some $c\ll a$ with $\p(b)\leq\psi(c)$. Assuming the opposite and, for the then given $b\ll a$, picking some $c\in A$ with $b\ll c\ll a$, one has $\p(b)\not\leq\psi(c)$. This implies $d:=\p(b)\wedge\psi(c^*)>0$, so that we find $\mathfrak u'\in {\sf Ult}(A')$ containing $d$. Consequently, $b\in\p\inv({\mathfrak u'})$ and $c^*\in\psi\inv({\mathfrak u'})$. But then $b\smallfrown_p c^*$, which means $b\not\ll c$, a contradiction. Since $\psi^{\vee}(a)\leq\p^{\vee}(a)$ follows by symmetry, the identity $\p^{\vee}=\psi^{\vee}$ is thereby confirmed.

Conversely, having $\p^{\vee}=\psi^{\vee}$, since $p'$ is surjective, it suffices to show $\p\inv(\mathfrak u')\smallfrown_p\psi\inv(\mathfrak u')$ for all $\mathfrak u'\in {\sf Ult}(A')$ to conclude $f=g$
 (see \ref{pupv}).
 Assuming $\p\inv(\mathfrak u')\not\smallfrown_p\psi\inv(\mathfrak u')$ for some $\mathfrak u'\in {\sf Ult}(A')$ we obtain $b\in\p\inv(\mathfrak u')$ and $a\in A$ with  $a^*\in\psi\inv(\mathfrak u')$ and $b\not\smallfrown_p a^*$, which means $\p(b),\psi(a^*)\in\mathfrak u'$ and $b\ll a$. Consequently, with the monotonicity of $\psi$ one has
$$0<\p(b)\wedge\psi(a)^*\leq\p(b)\wedge(\bigvee_{c\ll a}\psi(c))^*,$$
which implies $\p(b)\not\leq\bigvee_{c\ll a}\psi(c)=(V\psi)(a)$. But since $\p(b)\leq \p^{\vee}(a)$, this contradicts the hypothesis $\p^{\vee}=\psi^{\vee}$.
\end{proof}

\begin{const}\label{quotientfunctors}
\rm
With the compatible equivalence relation $\sim$ on ${\sf C}(\AA,\PP,\XX)$ as defined in Proposition \ref{Fed application}, and with $\backsim$ denoting the equivalence relation on {\bf deVBoo} induced by $V$ (so that $\p\backsim\psi\iff \p^{\vee}=\psi^{\vee}$), the assertion of Lemma \ref{UV2} reads as ($(\p,f)\sim(\psi,g)\iff\p\backsim\psi) $. Denoting by $\langle\p\rangle$ the $\backsim$-equivalence class of a morphism $\p$ in {\bf deVBoo}, the functors $U$ and $V$ therefore induce faithful functors
$$\BB=\xymatrix{{\sf C}(\AA,\PP,\XX)/\!\sim\ar[r]^{\quad\overline{U}} & {\bf deVBoo}/\!\backsim\ar[r]^{\quad\overline{V}} & {\bf deV}},$$
mapping objects like $U$ and $V$, and morphisms by $[\p,f]\mapsto \langle\p\rangle$ and $\langle\p\rangle\mapsto \p^{\vee}$, respectively.
\end{const}

To obtain the de Vries duality, it now suffices to prove that
the functors $\overline{U}$ and $\overline{V}$ are equivalences of categories. The first of these two assertions comes almost for free now, and it actually produces a new duality for the category $\bf KHaus$ that is of independent interest:

\begin{theorem}\label{modified deVries} {\rm (Modified de Vries Duality Theorem)}
The functor $\overline{U}$ is an equivalence of categories. As a consequence, $\bf KHaus$ is dually equivalent to the category
$\bf deVBoo/\!\!\backsim$,
 whose objects are complete normal contact algebras and whose morphisms are equivalence classes of Boolean morphisms reflecting the contact relations, to be composed by ordinary map composition of their representatives.
\end{theorem}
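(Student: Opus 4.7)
The proof plan is to harvest two ingredients already established. First, Proposition \ref{U equivalence} shows that the functor $U:{\sf C}(\AA,\PP,\XX)\to{\bf deVBoo}$ is itself an equivalence, with quasi-inverse $W$ satisfying $U\circ W={\rm Id}$ and a natural isomorphism $\lambda:WU\to{\rm Id}$. Second, Lemma \ref{UV2} matches the two compatible equivalence relations $\sim$ and $\backsim$ with each other under $U$. Since $\overline{U}$ is, by definition, the functor induced by $U$ on the corresponding quotient categories, the remaining task is to verify that the equivalence properties of $U$ survive passage to the quotients.

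Essential surjectivity comes for free: every object of ${\bf deVBoo}/\!\!\backsim$ is a complete normal contact algebra $A$, and $A=U(WA)$ holds on the nose, so $\overline{U}[WA]=A$. For fullness, given $(A,p),(A',p')\in|{\sf C}(\AA,\PP,\XX)|$ and a morphism $\langle\p\rangle:A\to A'$ in ${\bf deVBoo}/\!\!\backsim$ with representative $\p:A\to A'$, the fullness of $U$ (which is automatic from its being an equivalence) provides a lift $(\p,f):(A,p)\to(A',p')$ in ${\sf C}(\AA,\PP,\XX)$ whose first coordinate is literally $\p$; concretely, one may take $(\p,f):=\lambda_{(A',p')}\circ W\p\circ\lambda_{(A,p)}^{-1}$, whose first coordinate computes as $1_{A'}\circ\p\circ 1_A=\p$, because each $\lambda_{(A,p)}$ has the form $(1_A,\ell_p)$. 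Hence $\overline{U}[\p,f]=\langle\p\rangle$. For faithfulness, suppose that $\overline{U}[\p,f]=\overline{U}[\psi,g]$ for parallel morphisms $[\p,f],[\psi,g]:(A,p)\to(A',p')$; then $\p\backsim\psi$, i.e.\ $\p^{\vee}=\psi^{\vee}$, and Lemma \ref{UV2} (its nontrivial direction) immediately forces $f=g$, whence $[\p,f]=[\psi,g]$ in the quotient.

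The consequence for $\bf KHaus$ is then immediate: dualizing $\overline{U}$ and composing with the dual equivalence $({\sf C}(\AA,\PP,\XX)/\!\sim)^{\rm op}\simeq{\bf KHaus}$ furnished by Proposition \ref{Fed application} yields the asserted dual equivalence $({\bf deVBoo}/\!\!\backsim)^{\rm op}\simeq{\bf KHaus}$, with the description of objects and morphisms recorded in the theorem. The plan encounters no real obstacle at this stage, since all the substantive work has been discharged beforehand: in Proposition \ref{U equivalence} (which leans on Gleason's theorem and Alexandroff's theorem to build the section $W$ and to confirm that $\smallfrown_p$ is a normal contact relation) and in the subtle ``$\Rightarrow$'' direction of Lemma \ref{UV2} (which translates the topological identity $f=g$ on quotient spaces into the algebraic identity $\p^{\vee}=\psi^{\vee}$ via ultrafilter separation arguments).
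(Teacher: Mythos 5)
Your proof is correct and rests on exactly the same ingredients as the paper's (Proposition \ref{U equivalence} and Lemma \ref{UV2}); the only difference is packaging, since the paper descends $W$ to an explicit quasi-inverse $\overline{W}:\langle\p\rangle\mapsto[\p,f_{\p}]$ and observes that $\overline{U}\circ\overline{W}={\rm Id}$ and $\overline{W}\circ\overline{U}\cong{\rm Id}$ survive the passage to the quotients, whereas you verify full faithfulness and essential surjectivity directly. One small slip in your closing summary: faithfulness uses the implication $\p^{\vee}=\psi^{\vee}\Rightarrow f=g$ of Lemma \ref{UV2} (as your faithfulness paragraph correctly does), not the direction $f=g\Rightarrow\p^{\vee}=\psi^{\vee}$, which is instead what guarantees that $U$ descends to $\overline{U}$ in the first place.
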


\begin{proof}
Just as $U$ induces the functor $\overline{U}$, the functor $W$ of Proposition \ref{U equivalence} induces the functor $$\overline{W}:{\bf deVBoo}/\!\backsim\,\longrightarrow {\sf C}(\AA,\PP,\XX)/\!\sim,\quad \langle \p\rangle\mapsto [\p,f_{\p}].$$
It maps objects as $W$ does and still satisfies $\overline{U}\circ\overline{W}={\rm Id}$ and $\overline{W}\circ\overline{U}\cong{\rm Id}$. Hence, $\overline{U}$ is an equivalence of categories, with quasi-inverse $\overline{W}$. The claimed duality now follows with Proposition \ref{Fed application}.
\end{proof}

Since $\overline{V}$ maps objects identically and is trivially faithful, it suffices to show that $\overline{V}$ is full, in order for us to conclude that it is an isomorphism of categories. To this end, we first confirm the following proposition in which the assertions (2) and (3) are
due to de Vries \cite{deV},
but our proofs are new. Here $\ep_A$ and $\tau_A$ are defined as in \ref{Stone} and \ref{deVries}, and $\pi_A:{\sf Ult}(A)\to{\sf Ult}(A)/\!\smallfrown$ is the irreducible projection of Proposition \ref{U equivalence}.

\begin{pro}\label{full prop} {\rm(1)} For every de Vries algebra $(A,\smallfrown)$, one has the homeomorphism
$$\gamma_A:{\sf Ult}(A)/\!\smallfrown\;\lra {\sf Clust}(A),\;[\mathfrak u]\longmapsto\mathfrak c_{\mathfrak u}=\{a\in A\st\forall b\in\mathfrak u:a\smallfrown b\}$$
(making ${\sf Clust}(A)$ a compact Hausdorff space),
and the Boolean isomorphism
$$\rho_{\gamma_A\circ\pi_A}:{\sf CO}({\sf Ult}(A))\lra{\sf RC}({\sf Clust}(A)),\;H\longmapsto\gamma_A(\pi_A(H)),$$
which fits into the commutative diagram
\begin{center}
$\xymatrix{& A\ar[ld]_{\ep_A}\ar[dr]^{\tau_A} &\\
{\sf CO}({\sf Ult}(A))\ar[rr]^{\rho_{\gamma_A\circ\pi_A}} & & {\sf RC}({\sf Clust}(A))\,.}$
\end{center}

\bigskip

{\rm (2)} $\tau_A$ is a Boolean isomorphism which preserves the relation $\ll$ and, thus, is a {\bf deV}-isomorphism.

\bigskip

{\rm (3)}
For every morphism $\alpha:A\to A'$ in $\bf deV$, one has the continuous map
$$\hat{\alpha}:{\sf Clust}(A')\to{\sf Clust}(A),\quad {\mathfrak c'}\mapsto \{a\in A\st \forall\, b\in \!A\; (\,b\ll a^* \Longrightarrow \alpha(b)^*\in\mathfrak c'\,)\},$$
which fits into the commutative diagram
\begin{center}
$\xymatrix{
A\ar[rr]^{\a}\ar[d]_{\tau_A} & & A'\ar[d]^{\tau_{A'}}\\
{\sf RC}({\sf Clust}(A))\ar[rr]^{{\sf RC}(\hat{\a})} & & {\sf RC}({\sf Clust}(A'))\,.\\
}$	
\end{center}
\end{pro}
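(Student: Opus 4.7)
The plan is to prove the three assertions in sequence, with part~(1) doing most of the geometric work via Alexandroff's theorem applied to the irreducible projection $\pi_A$, and parts~(2) and~(3) following by essentially algebraic manipulations. For (1), I would first observe that $\gamma_A$ is a well-defined bijection: well-definedness uses the identity $\mathfrak u\smallfrown\mathfrak v\iff\mathfrak c_{\mathfrak u}=\mathfrak c_{\mathfrak v}$ recorded in~\ref{deVries}; surjectivity uses the fact, also noted there, that every cluster in $A$ arises as $\mathfrak c_{\mathfrak u}$ for some $\mathfrak u\in\Ult(A)$; and injectivity is contrapositive. The decisive computation is $\gamma_A(\pi_A(\varepsilon_A(a)))=\tau_A(a)$, which I would verify by double inclusion, using $\mathfrak u\subseteq\mathfrak c_{\mathfrak u}$ in one direction and the stronger form of surjectivity (for every $a\in\mathfrak c$ one may find $\mathfrak u$ with $a\in\mathfrak u$ and $\mathfrak c_{\mathfrak u}=\mathfrak c$) in the other. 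Because $\Ult(A)$ is extremally disconnected as the Stone space of the complete algebra $A$, one has $\CO(\Ult(A))=\RC(\Ult(A))$, so $\varepsilon_A$ lands in $\RC(\Ult(A))$ as a Boolean isomorphism. Alexandroff's theorem applied to the closed irreducible map $\pi_A$ then produces the Boolean isomorphism $\rho_{\pi_A}$, and the identification of basic closed sets shows that $\gamma_A$ transports the quotient topology on $\Ult(A)/\!\smallfrown$ bijectively onto the topology defined on $\Clust(A)$ in~\ref{deVries}. The commutative triangle follows because $\rho_{\gamma_A\circ\pi_A}=\rho_{\gamma_A}\circ\rho_{\pi_A}$ sends $\varepsilon_A(a)$ to $\tau_A(a)$ by construction.

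For (2), the factorization $\tau_A=\rho_{\gamma_A\circ\pi_A}\circ\varepsilon_A$ from~(1) exhibits $\tau_A$ as a composite of Boolean isomorphisms, hence a Boolean isomorphism. To verify that it preserves $\ll$, I would argue by contraposition: if $\tau_A(a)\not\ll\tau_A(b)$, that is $\tau_A(a)\cap\tau_A(b^{\ast})\neq\emptyset$, pick a cluster $\mathfrak c$ containing both $a$ and $b^{\ast}$ and apply axiom (cl\,2) to obtain $a\smallfrown b^{\ast}$, i.e.\ $a\not\ll b$. A symmetric argument applied to $\tau_A^{-1}$, which is again a Boolean isomorphism, shows that $\tau_A$ also reflects $\ll$. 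Axioms V1 and V2 for $\tau_A$ are immediate from its being a Boolean homomorphism, V3 follows from $\ll$-preservation, and V4 is forced by the identity $a=\bigvee\{b:b\ll a\}$ combined with the fact that the Boolean isomorphism $\tau_A$ between complete algebras preserves arbitrary suprema; applying the same reasoning to $\tau_A^{-1}$ confirms that $\tau_A$ is an isomorphism in \textbf{deV}.

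For (3), I would first verify that $\hat\alpha(\mathfrak c')$ really is a cluster by checking axioms (cl\,1)--(cl\,4) directly from V1--V4 of $\alpha$, the most delicate point being the use of the interpolation property (I\,5) of $\ll$ inside $A$. To establish the commutative square and continuity of $\hat\alpha$ simultaneously, I would compute $\hat\alpha^{-1}(\tau_A(a^{\ast}))$ explicitly from the definition of $\hat\alpha$, then combine $\Int(\tau_A(a))=\Clust(A)\setminus\tau_A(a^{\ast})$ with $\RC(\hat\alpha)(\tau_A(a))=\cl(\hat\alpha^{-1}(\Int(\tau_A(a))))$ and identify the resulting regular closed set in $\Clust(A')$ with $\tau_{A'}(\alpha(a))$. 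Once this naturality identity is established, continuity of $\hat\alpha$ follows automatically because preimages of the basic closed sets $\tau_A(a)$ then differ from closed sets $\tau_{A'}(\alpha(a))$ only by boundary points.

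The main obstacle will be precisely this last step of part~(3): unlike in the Fedorchuk setting, where $\alpha$ is a genuine Boolean homomorphism and $\hat\alpha$ is essentially conjugate to $\Ult(\alpha)$ modulo the contact relation, a general de Vries morphism does not pull ultrafilters back to ultrafilters, so the verification must pass through V4, rewriting $\alpha(a)$ as the supremum of $\alpha(b)$ over $b\ll a$, in order to translate the set-theoretic manipulations of ``$\cl f^{-1}\Int(-)$'' into contact-theoretic statements inside $A'$. This is where the round structure of de Vries morphisms is genuinely used, and where the proof differs essentially from its Stone and Fedorchuk counterparts.
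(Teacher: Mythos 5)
Your proposal is correct and follows essentially the same route as the paper: part (1) rests on the bijectivity of $\gamma_A$, Alexandroff's theorem applied to $\pi_A$, and the key identity $\gamma_A(\pi_A(\ep_A(a)))=\tau_A(a)$; part (2) derives $\ll$-preservation from the cluster characterization of the contact relation; and part (3) verifies the cluster axioms directly and obtains naturality and continuity together from the computation $\cl(\hat{\a}^{-1}(\int(\tau_A(a))))=\tau_{A'}(\bigvee_{b\ll a}\a(b))=\tau_{A'}(\a(a))$ via (V4). Your closing remark correctly identifies the same crux the paper's argument turns on.
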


\begin{proof} (1) We first note that the family $\{\tau_A(a)\st a\in A\}$ can indeed be taken as a closed base for the space $\Clustsf(A)$
since $\tau_A(0)=\ems$, and for all $a,b\in A$ one has
$$\tau_A(a\vee b)=\{\mathfrak c\in\Clustsf(A)\st a\vee b\in\mathfrak c\}=\{\mathfrak c\in\Clustsf(A)\st a\in\mathfrak c\mbox{ or }b\in\mathfrak c\}=\tau_A(a)\cup\tau_A(b).$$

 From the assertions of \ref{deVries} we know that $\gamma_A$ is a well-defined bijective map.
Also, by applying Alexandroff's Theorem (see \ref{projective}) to $\pi_A:{\sf Ult}(A)\to Y ={\sf Ult}(A)/\!\smallfrown$, we obtain the Boolean isomorphism
$$\rho_{\pi_A}:{\sf CO}({\sf Ult}(A))={\sf RC}({\sf Ult}(A))\to {\sf RC}(Y),\quad H\mapsto \pi_A(H).$$
That $\gamma_A$ is actually a homeomorphism
 follows with
$$\gamma_A(\pi_A(\ep_A(a)))=\{{\mathfrak c}_{\mathfrak u}\st a\in\mathfrak u\in {\sf Ult}(A)\}=\{\mathfrak c\st a\in\mathfrak c\in{\sf Clust}(A)\}=\tau_A(a),$$
for all $a\in A$. Consequently,
$$\rho_{\gamma_A}:{\sf RC}(Y)\to{\sf RC}({\sf Clust}(A)),\quad K\mapsto \gamma_A(K),$$
and then also $\rho_{\gamma_A\circ\pi_A}=\rho_{\gamma_A}\circ\rho_{\pi_A}$, are Boolean isomorphisms, and the triangle of (1) commutes.
Finally, since ${\sf Ult}(A)/\!\smallfrown\;$ is a compact Hausdorff space (as shown in  \cite[Lemma 4.4]{DDT}) and $\g_A$ is a homeomorphism, ${\sf Clust}(A)$ is also a compact Hausdorff space.

(2) Clearly, by (1), $\tau_A$ is a Booolean isomorphism. With the assertions of \ref{deVries} (see also \cite[Corollary 3.4, p.222]{DV1}), for all $a,b\in A$ one has

\medskip

\begin{tabular}{ll}
$ a\smallfrown b$ & $\iff \exists\, \mathfrak u,\mathfrak v\in{\sf Ult}(A): a\in\mathfrak u,\, b\in\mathfrak v,\,\mathfrak u\smallfrown \mathfrak v$\\
 &$\iff \exists\, \mathfrak u,\mathfrak v\in{\sf Ult}(A): a\in\mathfrak u,\,b\in\mathfrak v,\,\mathfrak c_{\mathfrak u}=\mathfrak c_{\mathfrak v} $ \\
 &$\iff \exists\,\mathfrak c\in{\sf Clust}(A): a,b\in\mathfrak c$\\
 & $\iff \tau_A(a)\smallfrown_Z\tau_A(b),$\\
  \end{tabular}

  \medskip

 \noindent where $Z:=\Clustsf(A)$. Consequently, $\tau_A$ preserves the relation $\ll$. %Then $\tau_A$ is a {\bf deV}-isomorphism.

 \medskip

(3) We must first confirm that $\hat{\alpha}(\mathfrak c')$ is a cluster of $A$, for every $\mathfrak c'\in{\sf Clust}(A')$. Obviously,
$1\in\hat{\alpha}(\mathfrak c')$,
so that (cl\,1) is satisfied. For (cl\,2), we consider $a,b\in\hat{\alpha}(\mathfrak c')$ and assume that we had $a\not\smallfrown b$, that is: $a\ll b^*$. Choosing $c,d\in A$ such that $a\ll c\ll d^*\ll b^*$ or, equivalently, $b\ll d\ll c^* \ll a^*$, the definition of $\hat{\alpha}(\mathfrak c')$ gives $(\a(c^*))^*\in\mathfrak c'$ and $(\a(d^*))^*\in\mathfrak c'$, so that $(\a(c^*))^*\smallfrown(\a(d^*))^*$. However, since $c\ll d^*$, (V3) gives $(\a(c^*))^*\ll \a(d^*)$, which means $(\a(d^*))^*\not\smallfrown (\a(c^*))^*$, a contradiction.

To confirm (cl\,3), we consider $a,b\in A$ with $a\vee b\in\a(\mathfrak c')$ and assume we had
 $a\nin\hat{\a}(\mathfrak c')$ and $b\nin\hat{\a}(\mathfrak c')$. Then we obtain $c,d\in A$ such that $c\ll a^*$, $d\ll b^*$ and $(\a(c))^*\nin\mathfrak c'$,
$(\a(d))^*\nin\mathfrak c'$. From $c\we d\ll a^*\we b^*=(a\vee b)^*$ we conclude
$$(\a(c\we d))^*=(\a(c)\we\a(d))^*=(\a(c))^*\vee(\a(d))^*\in\mathfrak c',$$ which implies  $(\a(c))^*\in\mathfrak c'$ or $(\a(d))^*\in\mathfrak c'$, a contradiction.

Finally, for (cl\,4), we consider $a\in A$ with $a\smallfrown b$ for every $b\in\hat{\a}(\mathfrak c')$ and assume  $a\nin\hat{\a}(\mathfrak c')$, so that, for some $c\in A$, one has $c\ll a^*$, but $(\a(c))^*\nin\mathfrak c'$.
 Then
$\a(c)\in\mathfrak c'$. We claim that $c$ lies in $\hat{\a}(\mathfrak c')$. Indeed, if $d\in A$ satisfies $d\ll c^*$, then, as  $\a$ preserves $\ll$, we have $\a(d)\ll\a(c^*)$; therefore, $\a(c)\le(\a(c^*))^*\ll (\a(d))^*$ which, with $\a(c)\in\mathfrak c'$, implies $(\a(d))^*\in\mathfrak c'$;
thus $c\in\hat{\a}(\mathfrak c')$.
 But, since $c\ll a^*$, we have $a\not\smallfrown c$, despite $c\in\hat{\a}(\mathfrak c')$ -- a contradiction.

To show the continuity of $\hat{\a}$, we first observe that, since $\{\tau_A(a)\st a\in A\}={\sf RC}(Z)$ is a base for closed sets in $Z={\sf Clust}(A)$, the set $${\sf RO}(Z)=\{{\rm int}(F)\st F\in{\sf RC}(Z)\}=\{{\rm int}(\tau_A(a))\st a\in A\}$$
is a base for open sets in $Z$. We must therefore show that $\hat{\a}\inv({\rm int}(\tau_A(a)))$ is open in $Z'={\sf Clust}(A')$, for every $a\in A$. Indeed, denoting the Boolean operations of ${\sf RC}(Z)$ as in \ref{deVriesAlgebraic}, with $F= \tau_A(a)$ we have $$\int(F)=Z\stm\cl(Z\stm F)=Z\stm F^*=Z\stm \tau_A(a^*)=\{\mathfrak c\in Z\st a^*\nin\mathfrak c\}.$$
Hence, for every $\mathfrak c'\in Z'$, one obtains
$$\mathfrak c'\in\hat{\a}\inv({\rm int}(F))\iff a^*\nin \hat{\a}(\mathfrak c')\iff\exists\, b\in A\;(b\ll a,\,(\a(b))^*\nin\mathfrak c').$$
Since $(\a(b))^*\nin\mathfrak c'$ means equivalently $ \mathfrak c'\nin\tau_{A'}((\a(b))^*)$, we conclude that the set $$\hat{\a}\inv({\rm int}(F))=
\bigcup_{b\ll a}Z'\stm \tau_{A'}((\a(b))^*)=\bigcup_{b\ll a}\int(\tau_{A\ap}(\a(b)))$$
is indeed open in $Z'$. Furthermore, continuing to take advantage of the Boolean isomorphism $\tau_A$ and the de Vries morphism $\a$, we see that
$$\cl(\hat{\a}\inv({\rm int}(F)))=\cl(\bigcup_{b\ll a}\int(\tau_{A\ap}(\a(b))))
=\bigvee_{b\ll a}\tau_{A\ap}(\a(b))=\tau_{A\ap}(\bigvee_{b\ll a}\a(b))=\tau_{A\ap}(\a(a))$$
holds in $Z'$. But this proves precisely the claimed identity ${\sf RC}(\hat{\a})\circ\tau_A=\tau_{A'}\circ\a$.
\end{proof}	

As a final step in completing the proof of the fullness of the functor $\overline{V}$ we prove the following lemma. Here we understand $T_3$-space (resp., $T_4$-space) to mean a regular (resp., normal) Hausdorff space.

\begin{lm}\label{mainlm}
Let $p:X\to Y,\,p':X'\to Y',\,f:X'\to X,\,g:Y'\to Y$ be continuous maps of topological spaces with $p\circ f=g\circ p'$. If $p$ is closed irreducible and $Y$ a $T_4$-space, then every $G\in\RC(X)$ satisfies the identity

$$\cl_{Y'}(g\inv(\int_Y(p(G))))=\bigvee\{p\ap(f\inv(H))\st H\in\RC(X) \mbox{ and } p(H)\sbe\int_Y(p(G))\}.$$
\end{lm}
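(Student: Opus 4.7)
The plan is to establish the equality by two inclusions, letting $L$ denote the LHS and interpreting the RHS as $R:=\cl_{Y'}(\bigcup\{p'(f\inv(H))\st H\in\RC(X),\ p(H)\sbe\int_Y(p(G))\})$, namely the join in $\RC(Y')$. The inclusion $R\sbe L$ is routine: for any qualifying $H$ and any $x'\in f\inv(H)$, the commutativity $p\circ f=g\circ p'$ forces $g(p'(x'))=p(f(x'))\in p(H)\sbe\int_Y(p(G))$, so $p'(f\inv(H))\sbe g\inv(\int_Y(p(G)))$, and taking the closure of the union over all qualifying $H$ yields $R\sbe L$.

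For $L\sbe R$, since $R$ is closed it is enough to show $g\inv(\int_Y(p(G)))\sbe R$. Given $y'\in g\inv(\int_Y(p(G)))$ and an open neighborhood $V\ni y'$ in $Y'$, I will use the $T_3$-property of the $T_4$-space $Y$ to find an open $U$ with $g(y')\in U\sbe\cl_Y(U)\sbe\int_Y(p(G))$; then $K:=\cl_Y(U)\in\RC(Y)$ satisfies $g(y')\in\int_Y(K)\sbe K\sbe\int_Y(p(G))$. Alexandroff's Theorem applied to the closed irreducible map $p$ then supplies $H:=\cl_X(p\inv(\int_Y(K)))=\rho_p\inv(K)\in\RC(X)$ with $p(H)=K\sbe\int_Y(p(G))$, so $H$ qualifies. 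Setting $W:=V\cap g\inv(\int_Y(K))$, any point $z'=p'(x')\in W$ satisfies $p(f(x'))=g(z')\in\int_Y(K)$, hence $f(x')\in p\inv(\int_Y(K))\sbe H$ and $z'\in V\cap p'(f\inv(H))$. This places $y'$ in $R$.

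The hard part is guaranteeing that the open set $W$ actually meets the image of $p'$: this requires $p'$ to have dense image in $Y'$, which is not explicitly listed among the hypotheses. In the context in which this lemma is applied, however, $p'$ is drawn from the $\XX$-covering class $\PP$ of closed irreducible surjections, so this density is automatic. Apart from this subtlety, the argument is a direct combination of Alexandroff's Theorem --- which transfers the sandwiching of $g(y')$ between $\RC(Y)$ and $\RC(X)$ via $\rho_p$ --- with the regularity of $Y$, used to find the regular closed set $K$ squeezed between $g(y')$ and $\int_Y(p(G))$.
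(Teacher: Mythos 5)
Your proof is correct (granting the implicit hypothesis you identify at the end), and it assembles the same ingredients as the paper in a noticeably different way. The paper proves the \emph{exact} set identity $g\inv(\int_Y(p(G)))=\bigcup\{p\ap(f\inv(H))\st H\in\RC(X),\ p(H)\sbe\int_Y(p(G))\}$ \emph{before} taking closures: regularity of $Y$ plus Alexandroff's Theorem give $\int_Y(p(G))=\bigcup\{p(H)\st p(H)\sbe\int_Y(p(G))\}$; surjectivity of $p'$ rewrites $g\inv(p(H))$ as $p\ap(f\inv(p\inv(p(H))))$; and then \emph{normality} of $Y$ (with Alexandroff again) is needed to absorb the non-regular-closed set $p\inv(p(H))$ into a larger qualifying $H'\in\RC(X)$. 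Your neighbourhood-chasing argument establishes only the equality of the closures, which is all the lemma asserts; in exchange you dispense with the normality step entirely (only $T_3$ is used) and you need only that $p'$ has dense image rather than that it is surjective. Both proofs run on the same engine --- the isomorphism $\rho_p$ and the regularity of $Y$, used to squeeze a set $p(H)$ with $H\in\RC(X)$ strictly inside $\int_Y(p(G))$ --- and your reading of $\bigvee$ as $\cl_{Y'}(\bigcup\;\cdot\;)$ agrees with the paper's appeal to the join formula of \ref{deVriesAlgebraic}.

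The issue you raise about the image of $p'$ is not a defect of your argument but of the lemma as stated: the paper's own proof invokes ``since $p'$ is surjective'' even though no such hypothesis appears in the statement, and without it the identity fails (take $X'=\emptyset$ and $g$ mapping a nonempty $Y'$ into $\int_Y(p(G))$: the left-hand side is $Y'$ while the right-hand side is empty). As you observe, in the sole application of the lemma --- the proof of Theorem \ref{newdeVries}, where $p\ap=\g_{A\ap}\circ\pi_{A\ap}$ is a closed irreducible surjection --- the missing hypothesis is satisfied, so no harm results; but it should be added to the statement, and your proof shows that dense image of $p'$ (and mere regularity of $Y$) already suffices.
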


\begin{proof}
Alexandroff's Theorem (see \ref{projective}) gives us the Boolean isomorphism $$\rho_p:\RC(X)\lra \RC(Y),\ \ H\mapsto p(H),$$
  whose inverse maps $K\in\RC(Y)$ to $\cl_X(p\inv(\int_Y(K)))$.
  Since $Y$ is a $T_3$-space, for every $G\in\RC(X)$ the theorem implies
  $$\int(p(G))=\bigcup\{p(H)\st H\in\RC(X)\mbox{  and }p(H)\sbe\int(p(G))\}.$$
  In describing the inverse image under $g$ of this union, we first note that, since $p'$ is surjective, we have
   $g\inv (p(H))=p'(p'^{-1} (g\inv(p(H))))=p'(f\inv(p\inv(p(H)) ))$
  for every $H\in\RC(X)$, which gives us
   $$g\inv(\int(p(G)))=\bigcup\{p\ap(f\inv(p\inv(p(H))))\st H\in\RC(X)\mbox{ and }p(H)\sbe\int(p(G))\}.$$
  Since $Y$ is a $T_4$-space, Alexandroff's Theorem gives us for every $H$ with $p(H)\sbe\int(p(G))$ contributing to this union a set $H\ap\in\RC(X)$ with $p(H)\sbe \int(p(H\ap))\sbe p(H\ap)\sbe\int(p(G))$; moreover, $H'=\rho_p\inv(\rho_p(H'))=\cl(p\inv(\int(p(H'))))$. Consequently,
  $H\sbe p\inv(p(H))\sbe p\inv(\int(p(H')))\sbe \cl(p\inv(\int(p(H'))))=H'.$
  As a result,
    $$g\inv(\int(p(G)))=\bigcup\{p\ap(f\inv(H))\st H\in\RC(X)\mbox{ and }p(H)\sbe\int(p(G))\}.$$
  Finally, using  \ref{deVriesAlgebraic}, we conclude the claimed formula.
 \end{proof}

We can now sum up and return to the functor
$$\overline{V}:{\bf deVBoo}/\!\!\backsim\;\longrightarrow{\bf deV}$$
of \ref{quotientfunctors} and complete our alternative proof of the de Vries dual equivalence.

\begin{theorem}\label{newdeVries} {\rm (The de Vries Duality Theorem)}
The functor
$\overline{V}$
is an isomorphism of categories.
Consequently, the category $\bf KHaus$ is dually equivalent to the category $\bf deV$ of de Vries algebras and their morphisms.
\end{theorem}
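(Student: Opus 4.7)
Since $\overline{V}$ is the identity on objects and faithful by the very definition of the equivalence $\backsim$, it suffices to prove fullness: given any de Vries morphism $\alpha\colon A\to B$, we must exhibit a Boolean homomorphism $\varphi\colon A\to B$ reflecting the contact relation with $\varphi^{\vee}=\alpha$. The plan is to transport $\alpha$ to the cluster level, lift along projective covers, and then read off $\varphi$ from the Stone duality.

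By Proposition \ref{full prop}(3), $\alpha$ induces the continuous map $\hat{\alpha}\colon\Clustsf(B)\to\Clustsf(A)$. By Proposition \ref{full prop}(1) (together with \cite[Lemma~4.4]{DDT}), the maps $p:=\gamma_A\circ\pi_A\colon\ult(A)\to\Clustsf(A)$ and $p':=\gamma_B\circ\pi_B\colon\ult(B)\to\Clustsf(B)$ are closed irreducible surjections whose domains are extremally disconnected. Since $\ult(B)$ is therefore Gleason-projective, the composite $\hat{\alpha}\circ p'$ lifts along $p$ to a continuous map $f\colon\ult(B)\to\ult(A)$ with $p\circ f=\hat{\alpha}\circ p'$, and Stone duality assigns to $f$ the Boolean homomorphism $\varphi\colon A\to B$ characterised by $\ep_B\circ\varphi=f\inv\circ\ep_A$. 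That $\varphi$ reflects $\smallfrown$ is now immediate: given $\mathfrak u',\mathfrak v'\in\ult(B)$ with $\varphi(a)\in\mathfrak u'$, $\varphi(b)\in\mathfrak v'$ and $p'(\mathfrak u')=p'(\mathfrak v')$, we have $a\in f(\mathfrak u')$, $b\in f(\mathfrak v')$ and $p(f(\mathfrak u'))=\hat{\alpha}(p'(\mathfrak u'))=\hat{\alpha}(p'(\mathfrak v'))=p(f(\mathfrak v'))$, whence $a\smallfrown b$ in $A$.

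It remains to verify $\varphi^{\vee}=\alpha$, and this is precisely where Lemma \ref{mainlm} does the heavy lifting. Applied to the commutative square $p\circ f=\hat{\alpha}\circ p'$ (note that $\Clustsf(A)$ is compact Hausdorff, hence $T_4$, and $p$ is closed irreducible) with $G:=\ep_A(a)\in\RC(\ult(A))$ for an arbitrary $a\in A$, the lemma yields
$$\cl\bigl(\hat{\alpha}\inv(\int(p(\ep_A(a))))\bigr)=\bigvee\bigl\{\,p'(f\inv(H))\st H\in\RC(\ult(A)),\ p(H)\sbe\int(p(\ep_A(a)))\,\bigr\}$$
in $\RC(\Clustsf(B))$. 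Using $\tau_A=\rho_p\circ\ep_A$ from Proposition \ref{full prop}(1), the left-hand side rewrites as $\RC(\hat{\alpha})(\tau_A(a))$, which equals $\tau_B(\alpha(a))$ by Proposition \ref{full prop}(3). On the right, every $H\in\RC(\ult(A))=\CO(\ult(A))$ is uniquely of the form $\ep_A(b)$; the condition $p(\ep_A(b))\sbe\int(p(\ep_A(a)))$ reads $\tau_A(b)\ll\tau_A(a)$ in $\RC(\Clustsf(A))$ and hence, since $\tau_A$ is a de Vries isomorphism, is equivalent to $b\ll a$ in $A$, while the summand itself rewrites as $p'(\ep_B(\varphi(b)))=\tau_B(\varphi(b))$. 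Because $\tau_B$ is a Boolean isomorphism between complete Boolean algebras, it preserves arbitrary suprema, so the right-hand side equals $\tau_B\bigl(\bigvee\{\varphi(b)\st b\ll a\}\bigr)=\tau_B(\varphi^{\vee}(a))$. Injectivity of $\tau_B$ then forces $\alpha(a)=\varphi^{\vee}(a)$.

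The main obstacle---really the only step demanding care---is the bookkeeping in the final paragraph: translating the topological condition $p(H)\sbe\int(p(G))$ into the algebraic non-tangential inclusion $b\ll a$ through $\tau_A$, and justifying the interchange of the join on the right-hand side of Lemma \ref{mainlm} with $\tau_B\inv$. Both facts rest squarely on $\tau_A$ and $\tau_B$ being isomorphisms of complete Boolean algebras, a property already delivered to us by Proposition \ref{full prop}(2).
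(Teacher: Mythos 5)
Your proof is correct and follows essentially the same route as the paper: lift $\hat{\alpha}$ along the Gleason covers $p=\gamma_A\circ\pi_A$ and $p'$ to get $f$, read off $\varphi$ via Stone duality, check contact reflection through the commutative rectangle, and invoke Lemma \ref{mainlm} for the identity $\varphi^{\vee}=\alpha$. The only (harmless, arguably cleaner) divergence is in the last step: you evaluate Lemma \ref{mainlm} pointwise at $G=\ep_A(a)$ and push the join through the sup-preserving isomorphism $\tau_B$, whereas the paper works with whole composites and therefore also needs the composition formula $(\psi\circ\p)^{\vee}=(\psi^{\vee}\circ\p^{\vee})^{\vee}$ of Lemma \ref{mainlm2} to unwind $(\tau_{A'}^{-1}\circ\zeta^{\vee}\circ\tau_A)^{\vee}$.
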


\begin{proof}
In order to show that $\overline{V}$ is full, given $\alpha:A\to A'$ in $\bf deV$, we must show  $\alpha=V\p$, for some $\p:A\to A'$ in
$\bf deVBoo$. Proposition \ref{full prop} produces the continuous map $\hat{\alpha}={\sf Clust}(\a):{\sf Clust}(A')\to{\sf Clust}(A)$, as well as the homeomorphism
$\gamma_A:{\sf Ult}(A)/\!\smallfrown\;\lra {\sf Clust}(A),\;[\mathfrak u]\mapsto\mathfrak c_{\mathfrak u}$. With $\bar{f}=\gamma_A\inv\circ\hat{\a}\circ\gamma_{A'}$, the
Gleason Theorem (see \ref{projective}) together with the
fact that the map $\pi_A:{\sf Ult}(A)\to{\sf Ult}(A)/\!\smallfrown$ is a surjection
give us a continuous map $f$ making the diagram
\begin{center}
$\xymatrix{{\sf Ult}(A)\ar[d]_{\pi_A} & {\sf Ult}(A')\ar[l]_{f}\ar[d]^{\pi_{A'}}\\
{\sf Ult}(A)/\!\smallfrown\ar[d]_{\gamma_A} & {\sf Ult}(A')/\!\smallfrown\ar[l]_{\bar{f}}\ar[d]^{\gamma_{A'}}\\
{\sf Clust}(A) & {\sf Clust}(A')\ar[l]^{\hat{\alpha}}\\
}$	
\end{center}
commute. By the Stone duality, $f={\sf Ult}(\p)$, for a Boolean homomorphism $\p:A\to A'$.
Obviously,
the commutativity of the above diagram implies that ${\sf Ult}(\p)$ preserves the contact relation for ultrafilters, so that
(by Proposition 4.6 of \cite{DDT})
 $\p$ itself must reflect the contact relation and therefore be a morphism in $\bf deVBoo$.

Apply\-ing Lemma \ref{mainlm} to the outer rectangle of the above diagram, thus
putting $p:=\g_A\circ\pi_A,\, p\ap:=\g_{A\ap}\circ\pi_{A\ap}$ and $g:=\hat{\alpha}$, we obtain
$$\RCsf(\hat{\a})=(\rho_{p\ap}\circ \COsf(\Ultsf(\p))\circ\rho_p\inv)^{\vee}.$$
 Now, Proposition \ref{full prop}(3) implies
 $\tau_{A\ap}\circ\a\circ\tau_A\inv=(\rho_{p\ap}\circ \COsf(\Ultsf(\p))\circ\rho_p\inv)^{\vee}$ and thus
 $$\a=\tau_{A\ap}\inv\circ\z^{\vee}\circ\tau_A,\ \ \mbox{where}\ \  \z:=\rho_{p\ap}\circ \COsf(\Ultsf(\p))\circ\rho_p\inv.$$
 With Lemma \ref{mainlm2} and Proposition \ref{full prop}(2) we then obtain

 \medskip

 \begin{tabular}{ll}
$\a$ & $=\a^{\vee} =((\tau_{A\ap}\inv\circ\z^{\vee})\circ\tau_A)^{\vee}=(((\tau_{A\ap}\inv)^{\vee}\circ\z^{\vee})^{\vee}\circ(\tau_A)^{\vee})^{\vee}$\\ & $=
 ((\tau_{A\ap}\inv\circ\z)^{\vee}\circ(\tau_A)^{\vee})^{\vee} =
 (\tau_{A\ap}\inv\circ\z\circ\tau_A)^{\vee}$\\ & $=(\tau_{A\ap}\inv\circ\rho_{p\ap}\circ \COsf(\Ultsf(\p))\circ\rho_p\inv\circ\tau_A)^{\vee}.$\\
 \end{tabular}

 \medskip
\noindent Finally, applying Proposition \ref{full prop}(1) and the Stone Duality, we conclude  $$\a=(\ep_{A\ap}\inv\circ \COsf(\Ultsf(\p))\circ \ep_A)^{\vee}=\p^{\vee}=V(\p),$$
as desired.
\end{proof}

We also confirm that our constructions leading up to Theorem \ref{newdeVries} give, up to natural isomorphisms, the functors $\sf Clust$ and $\sf RC$ furnishing the classical de Vries dual equivalence, with unit $\sigma$ and counit $\tau$ as described in \ref{deVries}; that is:
\begin{cor}
The diagram
\begin{center}
$\xymatrix{{\bf deV}^{\rm op}\ar@/^0.2pc/[r]^{\overline{V}^{-1}\;\,}\ar@/^2.8pc/[rrr]^{\sf Clust} & ({\bf deVBoo}/\!\backsim)^{\rm op}\ar@/^0.2pc/[r]^{\overline{W}\quad}\ar@/^0.2pc/[l]^{\overline{V}\;\;} & ({\sf C}(\AA,\PP,\XX)/\!\sim)^{\rm op}\ar@/^0.2pc/[r]^{\quad\tilde{T}}\ar@/^0.2pc/[l]^{\overline{U}\quad} & {\bf KHaus}\ar@/^0.2pc/[l]^{\quad\tilde{S}}\ar@/^2.8pc/[lll]^{\sf RC}\\
}$	
\end{center}
commutes in an obvious sense, up to natural isomorphism.	
\end{cor}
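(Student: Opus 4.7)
The plan is to track objects and morphisms through the composites in both directions, using the technical results already established.

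For the top row, on an object $A\in\DHC$ the composite $\tilde T\circ\overline W\circ\overline V\inv$ yields $\tilde T(A,\pi_A)=\Ultsf(A)/\!\smallfrown$, which by Proposition \ref{full prop}(1) is canonically homeomorphic to $\Clustsf(A)$ via $\g_A$; this $\g$ will serve as the component of the required natural isomorphism. Naturality in a $\DHC$-morphism $\a:A\to A\ap$ is verified by using Theorem \ref{newdeVries} to pick a representative $\langle\p\rangle\in({\bf deVBoo}/\!\backsim)(A,A\ap)$ with $\p^{\vee}=\a$; then $\overline W(\langle\p\rangle)=[\p,f_\p]$, where $f_\p$ is characterised by $\pi_A\circ\Ultsf(\p)=f_\p\circ\pi_{A\ap}$, and the commutative rectangle appearing in the proof of Theorem \ref{newdeVries} gives $\g_A\circ f_\p=\hat\a\circ\g_{A\ap}$, which is exactly the required naturality square.

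For the bottom row, on an object $Y\in\HC$ the composite $\overline V\circ\overline U\circ\tilde S$ returns $\rc(Y)$ equipped with the contact relation $\smallfrown_{\pi_Y}$. Alexandroff's Theorem applied to the Gleason cover $\pi_Y$ yields the Boolean isomorphism $\rho_{\pi_Y}:H\mapsto\pi_Y(H)$ inverse to $\varepsilon_{\rc(Y)}$, so $\pi_Y(\varepsilon_{\rc(Y)}(a))=a$ for every $a\in\rc(Y)$; consequently $\smallfrown_{\pi_Y}$ coincides on the nose with the standard intersection contact relation, and the two functors agree strictly on objects. For a morphism $f:Y\to Y\ap$ in $\HC$, one has $\tilde S(f)=[\p_f,f]$ with $\pi_Y\circ\Ultsf(\p_f)=f\circ\pi_{Y\ap}$, and $\overline V\overline U$ sends this class to $\p_f^{\vee}$. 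Applying Lemma \ref{mainlm} to this commutative square with $G=\varepsilon_{\rc(Y)}(a)$ and invoking $\pi_Y(\varepsilon(b))=b$ on both sides yields $\cl(f\inv(\int(a)))=\bv\{\p_f(b)\st b\ll a\}$, that is $\rc(f)(a)=\p_f^{\vee}(a)$.

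The main technical input, already established in the excerpt, is the Alexandroff identification $\rho_{\pi_Y}\circ\varepsilon_{\rc(Y)}=\Id$ combined with Lemma \ref{mainlm}; once these are in hand, the upper triangle commutes up to the nontrivial natural isomorphism $\g$, while the lower triangle commutes strictly, giving the claim.
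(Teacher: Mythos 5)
Your argument is correct, and for the top composite it coincides with the paper's: both identify $\tilde{T}(\overline{W}(\overline{V}^{-1}(A)))={\sf Ult}(A)/\!\smallfrown$ with ${\sf Clust}(A)$ via the homeomorphism $\gamma_A$ of Proposition \ref{full prop}(1); you additionally carry out the naturality check (via the rectangle $\gamma_A\circ f_{\p}=\hat{\a}\circ\gamma_{A'}$ extracted from the proof of Theorem \ref{newdeVries}), which the paper dismisses as routine. Where you genuinely diverge is the bottom composite: the paper disposes of it in one line by uniqueness of adjoints --- once $\tilde{T}\circ\overline{W}\circ\overline{V}^{-1}\cong{\sf Clust}$, its left adjoint $\overline{V}\circ\overline{U}\circ\tilde{S}$ must be isomorphic to ${\sf RC}$ --- whereas you verify it directly, showing strict agreement on objects via $\pi_Y(\varepsilon_{{\sf RC}(Y)}(a))=a$ and on morphisms via a second application of Lemma \ref{mainlm} with $G=\varepsilon_{{\sf RC}(Y)}(a)$. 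Both routes are sound; the paper's is more economical, while yours is self-contained and yields the stronger, explicit statement that the lower triangle commutes on the nose rather than merely up to isomorphism. The one step you should justify rather than attribute to Alexandroff's Theorem alone is the identity $\rho_{\pi_Y}\circ\varepsilon_{{\sf RC}(Y)}={\rm id}_{{\sf RC}(Y)}$: Alexandroff gives that $\rho_{\pi_Y}$ is an isomorphism with inverse $K\mapsto\cl(\pi_Y^{-1}(\int K))$, and one still needs the short computation that $\cl(\pi_Y^{-1}(\int F))=\varepsilon_{{\sf RC}(Y)}(F)$ (using that $F\notin\mathfrak u$ forces $\pi_Y(\mathfrak u)\notin\int F$, plus regularity of $\varepsilon(F)$); this is standard and implicit in the paper's identification of $({\sf CO}(EY),\pi_Y\circ\eta^{-1}_{EY})$ with $({\sf RC}(Y),\pi_Y)$, but it is not literally stated there.
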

\begin{proof}
It suffices to confirm that the functor $\tilde{T}\circ\overline{W}\circ\overline{V}^{-1}$ is naturally isomorphic to $\sf Clust$ since then its left adjoint, $\overline{V}\circ\overline{U}\circ\tilde{S}$, must be naturally isomorphic to $\sf RC$. Indeed, for $(A,\smallfrown)\in |{\bf deV}|$, by Proposition \ref{full prop} one has the homeomorphism
$$\xymatrix{\tilde{T}(\overline{W}(\overline{V}^{-1}(A,\smallfrown)))=\tilde{T}(A,\pi_A)={\sf Ult}(A)/\!\smallfrown\ar[rr]^{\qquad\qquad\qquad\gamma_A} & & {\sf Clust}(A).\\
}$$
Checking that $\gamma_A$ is natural in $A$ involves going back to the morphism definitions of the functors involved, but that is a routine matter.	
\end{proof}

\section{Two pathways to the Fedorchuk duality}

Sections 3 and 4 lead to two novel, but distinct methods of establishing the Fedorchuk duality, one using the de Vries duality, the other {\em not} using it. Under the former method, having established the de Vries duality with the help of the categorical extension technique as in Section 4, one may simply follow Fedorchuk's original approach \cite{F} and obtain his duality by showing that the de Vries duality may be restricted accordingly (as visualized by the top panel of the diagram of \ref{summary}).

Under the latter method, similarly to how we established the de Vries duality in this paper, one applies the categorical construction to the restricted Stone duality
\begin{center}
$\xymatrix{\AA^{\rm op}=({\bf CBoo}_{\rm sup})^{\rm op}\ar@/^0.6pc/[rr]^{\quad T\,=\,\sf Ult} & {\scriptstyle \simeq} & {\bf EKH}_{\rm open}=\XX\ar@/^0.6pc/[ll]^{\quad S\,=\,\sf CO}\;.
}$
\end{center}
As this approach has been outlined in \cite {DDT}, here we give only a brief summary of it.

\medskip

{\em Step 1}. One first shows that the category $\XX$ as above is coreflective in the category of compact Hausdorff spaces and quasi-open maps, $\YY={\bf KHaus}_{\rm q-open}$; see \cite{Rump} or Proposition 4.1 of \cite{DDT}.

{\em Step 2}. Now one can choose $\PP$ to be the class of coreflections (as in Example \ref{coreflective rem}(1)) and apply Corollary \ref{duality corefl} to obtain the dual equivalence
\begin{center}
$\xymatrix{\BB^{\rm op}:={\sf C}(\AA,\PP,\XX)^{\rm op}\ar@/^0.6pc/[rr]^{\tilde{T}} & {\scriptstyle \simeq} & \quad{{\bf KHaus}_{\rm q-open}=\YY}\ar@/^0.6pc/[ll]^{\tilde{S}}\;.
}$
\end{center}

{\em Step 3}. One establishes an equivalence
\begin{center}
$\xymatrix{{\bf Fed}\ar@/^0.6pc/[rr]^{{F}\quad} & {\scriptstyle \simeq} & {\sf C}(\AA,\PP,\XX)\,,\ar@/^0.6pc/[ll]^{{G}\quad}}$
\end{center}
of categories, with the functors $F$ and $G$ defined on objects like the functors $W$ und $U$ of Propositions \ref{U equivalence} and \ref{UV}, respectively.

{\em Step 4}. The composition of the dual equivalence of Step 2 with the dualization of the equivalence of Step 3 produces the Fedorchuk duality, as in

\begin{center}
$\xymatrix{{\bf Fed}^{\rm op}\ar@/^0.4pc/[rr]^{{F}\quad}\ar@/^2.3pc/[rrrr]^{\sf Clust} & {\scriptstyle \simeq} & {\sf C}(\AA,\PP,\XX)^{\rm op}\ar@/^0.4pc/[ll]^{{G}\quad}\ar@/^0.4pc/[rr]^{\tilde{T}\quad} & {\scriptstyle \simeq} & {{\bf KHaus}_{\rm q-open}}\ar@/^0.4pc/[ll]^{\tilde{S}\quad}\ar@/^2.3pc/[llll]^{\sf RC}\;.
}$
	
\end{center}

\section{The hom representation of the de Vries duality}
In \ref{Stone} we sketched the fact that the Stone duality arises naturally from a dual adjunction represented by the Boolean algebra ${\sf 2}$ inducing the ultrafilter monad on ${\bf Set}$, as a restriction of the comparison functor into the Eilenberg-Moore category of the monad, ${\bf KHaus}$. In this Section we show that the de Vries duality may be obtained in exactly the same manner; actually, more succinctly so, as no restriction of the comparison functor is necessary. The key observation, proved first in \cite{D-diss}, is that, as a contravariant $\bf Set$-valued functor, ${\sf Clust}$ is represented by $\sf 2$, now considered as a discrete de Vries algebra.

In more detail,
for every de Vries algebra $A$, we consider the map
$$\omega_A:{\bf deV}(A,{\sf 2})\lra {\sf Clust}(A),\quad \p\longmapsto \{a\in A\st \,\p(a^*)=0\}.$$
\begin{pro}\label{Clust as hom}
 The maps $\omega_A\;(A\in|{\bf deV}|)$ define a natural isomorphism $\omega:{\bf deV}(-,{\sf 2})\lra{\sf Clust}$ of functors \, ${\bf  deV}^{\rm op}\lra {\bf Set}$.	
\end{pro}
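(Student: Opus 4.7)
The plan is to establish the bijectivity of each $\omega_A$ by exhibiting an explicit inverse, and then to check naturality by unwinding the definitions of the two functors in question. The verification of the cluster axioms and of the de Vries morphism axioms are the bulk of the work; the naturality check, once the definitions are set out, should be a short computation.

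First I would verify that $\omega_A(\p)\df\{a\in A\st \p(a^*)=0\}$ is indeed a cluster for every $\p\in\DHC(A,\2)$. For (cl\,1) note $1\in\omega_A(\p)$ by V1. For (cl\,3) observe that $a\vee b\in\omega_A(\p)$ says $\p(a^*\we b^*)=0$, and V2 splits this meet in $\2$, yielding $a\in\omega_A(\p)$ or $b\in\omega_A(\p)$. For (cl\,2), if $a,b\in\omega_A(\p)$ but $a\ll b^*$, substituting into V3 gives $\p(a^*)^*\ll\p(b^*)$, which forces $\p(b^*)=1$, contradicting $b\in\omega_A(\p)$. For (cl\,4), if $\p(a^*)=1$, then by V4 some $c\ll a^*$ has $\p(c)=1$; interpolating $c\ll d\ll a^*$ and using that $\ll$ is preserved by $\p$ (which follows from V1--V3), one gets $\p(d)=1$, hence $\p(d^*)=0$ by V2 applied to $d\we d^*=0$, so $d\in\omega_A(\p)$; since $d\ll a^*$, we have $a\not\smf d$, contradicting the hypothesis of (cl\,4).

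Next I would construct the inverse map $\mathfrak c\mapsto\p_{\mathfrak c}$ by setting $\p_{\mathfrak c}(a)=1$ iff $a^*\notin\mathfrak c$. Axioms V1 and V2 reduce directly to ``$\mathfrak c$ is upward closed'' and to (cl\,3) applied to $a^*\vee b^*$. For V3, if $a^*\ll b$ and $\p_{\mathfrak c}(a)=0$, so $a^*\in\mathfrak c$, then (cl\,2) rules out $b^*\in\mathfrak c$ (since $a^*\not\smf b^*$), giving $\p_{\mathfrak c}(b)=1$. For V4, if $\p_{\mathfrak c}(a)=1$, i.e.\ $a^*\notin\mathfrak c$, (cl\,4) supplies $d\in\mathfrak c$ with $d\ll a$; interpolating $d\ll b\ll a$ forces $b^*\notin\mathfrak c$ (else (cl\,2) would give $d\smf b^*$), hence $\p_{\mathfrak c}(b)=1$ with $b\ll a$. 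Conversely, if some $b\ll a$ has $\p_{\mathfrak c}(b)=1$, upward closure of $\mathfrak c$ forces $a^*\notin\mathfrak c$. Mutual inverseness then reduces to the two tautologies $\p_{\omega_A(\p)}(a)=1\iff\p(a^{**})=\p(a)\neq 0$ and $\omega_A(\p_{\mathfrak c})=\mathfrak c$.

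Finally, for naturality in $\a:A\to A'$ one computes
\[
\omega_A(\p'\di\a)=\{a\st(\p'\di\a)(a^*)=0\}=\{a\st\forall b\ll a^*\colon \p'(\a(b))=0\},
\]
using the definition of $\di$ from~\ref{deVriesAlgebraic}. Rewriting $\p'(\a(b))=0$ as $\a(b)^*\in\omega_{A'}(\p')$ puts the right-hand side in the exact form of the formula for $\Clustsf(\a)(\omega_{A'}(\p'))$ recalled at the end of~\ref{deVries}, so the square commutes.

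The main obstacle I anticipate is axiom V4 for $\p_{\mathfrak c}$: showing that the join over $b\ll a$ actually recovers $\p_{\mathfrak c}(a)$ requires a careful two-step interpolation combined with (cl\,4), as sketched above; the remaining verifications are essentially bookkeeping once the interplay between ``$a^*\in\mathfrak c$'' and ``$\p(a)=0$'' has been internalized.
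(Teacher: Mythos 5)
Your proposal is correct and follows essentially the same route as the paper's proof: the same explicit inverse $\mathfrak c\mapsto\p_{\mathfrak c}$ determined by $(\p_{\mathfrak c}(a)=0\iff a^*\in\mathfrak c)$, the same case-by-case verification of (cl\,1--4) and (V1--4), and the same naturality computation via $(\p\circ\a)^{\vee}(a^*)=\bigvee\{(\p\circ\a)(b)\st b\ll a^*\}$. The only cosmetic difference is an extra (unnecessary but harmless) interpolation step in your check of (cl\,4) for $\omega_A(\p)$, where the paper reaches the contradiction directly from $0=\p(c\we c^*)=\p(c)\we\p(c^*)$.
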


\begin{proof}
For every de Vries morphism $\p:A\to{\sf 2}$, we should first confirm that $\omega_A(\p)$ is indeed a cluster in $A$.

(cl\,1) Since $\p(1^*)=\p(0)=0$, one trivially has $1\in\omega_A(\p)\neq\emptyset$.

(cl\,2) For $a,b\in\omega_A(\p)$, let's suppose we had $a\not\smallfrown b$, that is: $a\ll b^*$. Then (V3) implies $(\p(a^*))^*\ll\p(b^*)$, {\em i.e.}, $1\leq 0$, a contradiction.

(cl\,3) In the presence of (V2), from $a\vee b\in \omega_A(\p)$ one obtains $0=\p((a\vee b)^*)=\p(a^*\we b^*)=\p(a^*)\we\p(b^*)$, which gives $a\in\omega_A(\p)$ or $b\in\omega_A(\p)$.

 (cl\,4) For $a\in A$ with $a\smallfrown b$ for all $b\in\omega_A(\p)$, let's suppose we had $a\nin\omega_A(\p)$, so that $\p(a^*)=1$. Then, by (V4), there must be some
 $b\in A$ with $b\ll a^*$ and $\p(b)=1$. Since  $a\not\smallfrown b$, by hypothesis on $a$ we must have $b\nin\omega_A(\p)$, which means $\p(b^*)=1$. But this contradicts the trivial fact $0=\p(b\we b^*)=\p(b)\we\p(b^*)$.

Since, by definition, the cluster $\mathfrak c=\omega_A(\p)$ satisfies
\begin{equation*}\label{phisigma}
\p(a)=0\iff a^*\in\mathfrak c
\end{equation*}
for all $\a\in A$, one can now take this equivalence to, conversely, define a map $\p=\p_{\mathfrak c}:A\to{\sf 2}$ for any given cluster $\mathfrak c$ in $A$. Once we have shown that $\p_{\mathfrak c}$ is a de Vries morphism, it is then clear that $\mathfrak c\mapsto \p_{\mathfrak c}$ is inverse to $\omega_A$, so that $\omega_A$ is bijective.

(V1): From $0^*=1\in\mathfrak c\neq\emptyset$ one obtains  $\p_{\mathfrak c}(0)=0$.

(V2): Since the cluster $\mathfrak c$ is upwards closed, the map $\p_{\mathfrak c}$ is monotone, so that $\p_{\mathfrak c}(a\we b)\leq \p_{\mathfrak c}(a)\we\p_{\mathfrak c}(b)$ follows for all $a,b\in A$.
 Assuming now $\p_{\mathfrak c}(a\we b)=1$, so that $(a\we b)^*\nin\mathfrak c$, we have $a^*\vee b^*\nin\mathfrak c$. This means that neither $a^*$ nor $b^*$ can lie in the upwards closed set $\mathfrak c$. %Now, using (CL3) and (C4), we obtain that $a^*\nin\s$ and $b^*\nin\s$.
 Consequently, $\p_{\mathfrak c}(a)=1=\p_{\mathfrak c}(b)$. We conclude that $\p_{\mathfrak c}(a\we b)=\p_{\mathfrak c}(a)\we \p_{\mathfrak c}(b)$ holds always.

(V3):  Given $a,b\in A$ with $a^*\ll b$, assume first $\p_{\mathfrak c}(a)=0$, so that $a^*\in\s$. Since $a^*\not\smallfrown b^*$, we then obtain that $b^*\nin\mathfrak c$, or $\p_{\mathfrak c}(b)=1$. We conclude that $(\p_{\mathfrak c}(a))^*\ll\p_{\mathfrak c}(b)$ holds, trivially so when $\p_{\mathfrak c}(a)=1$.

(V4): Since $\p_{\mathfrak c}$ is monotone (see (V2)), we certainly have for all $a\in A$ that
$\p_{\mathfrak c}(a)\geq\bigvee\{\p_{\mathfrak c}(b)\st b\in A,\, b\ll a\}$. To show equality, assume $\p_{\mathfrak c}(a)=1$, that is, $a^*\nin\mathfrak c$. Then, by (cl\,4), there exists $c\in\mathfrak c$ with $a^*\not\smallfrown c$, or $c\ll a$. We may then pick some $b\in A$ with  $c\ll b\ll a$. From $c\not\smallfrown b^*$ we now obtain $b^*\nin\mathfrak c$, which means $\p_{\mathfrak c}(b)=1$. As $b\ll a$, this confirms (V4).

We must finally confirm the naturality of $\omega$, that is: for every de Vries morphism $\a:A\to A'$, we have to show the commutativity of the diagram
\begin{center}
$\xymatrix{{\bf deV}(A,{\sf 2})\ar[d]_{\omega_A} && {\bf deV}(A',{\sf 2})\ar[d]^{\omega_{A'}}\ar[ll]_{{\bf deV}(\a,{\sf 2})}\\
{\sf Clust}(A) && {\sf Clust}(A')\ar[ll]^{{\sf Clust}(\a)}\\
}$	
\end{center}
Here the map $\omega_A\circ{\bf deV}(\a,{\sf 2})$ sends every de Vries morphism $\p:A'\to{\sf 2}$ to the cluster $$\omega_A(\p\diamond\a)=\{a\in A\st(\p\circ\a)^{\vee}(a^*)=0\},$$
while the map ${\sf Clust}(\a)\circ\omega_{A'}$ sends $\p$ to the cluster
$$\{a\in A\st\forall\,b\,(b\ll a^*\Rightarrow(\a(b))^*\in\omega_{A'}(\p))\}=\{a\in A\st\forall\,b\,(b\ll a^*\Rightarrow(\p\circ\a)(b)=0)\}.$$
But since $(\p\circ\a)^{\vee}(a^*)=\bigvee\{(\p\circ\a)(b)\st b\in A,\, b\ll a^*\}$, the two clusters coincide.	
\end{proof}

 In strong analogy to the fundamental adjunction
\begin{center}
$\xymatrix{{\bf Boo}^{\rm op}\ar@/^0.6pc/[rr]^{{\bf Boo}(-,{\sf 2})} & {\scriptstyle \top} & {\bf Set}\ar@/^0.6pc/[ll]^{{\bf Set}(-,{\sf 2})}
}$
\end{center}
of \ref{Stone} underlying the Stone duality, we will now set up a dual adjunction, replacing Boolean by de Vries algebras, and then show that it may be used to build up the de Vries duality in a categorical manner, via the comparison functor of ${\bf deV}^{\rm op}$ into the Eilenberg-Moore category of its induced ${\bf Set}$-monad which, just as in the case of the Stone duality \ref{Stone}, turns out to be (up to isomorphism) the category ${\bf KHaus}$.
To this end, in what follows we regard the power set ${\sf P}(X)$ of a set $X$ as a discrete de Vries algebra, by taking $\ll$ to be the inclusion order; that is, ${\bf Set}(X,{\sf 2})$ is regarded as a discrete de Vries algebra with the pointwise order inherited from the two-chain $\sf 2$.
\begin{pro}\label{second fundamental} There is an adjunction
\begin{center}
$\xymatrix{{\bf deV}^{\rm op}\ar@/^0.6pc/[rr]^{{\bf deV}(-,{\sf 2})} & {\scriptstyle \top} & {\bf Set}\ar@/^0.6pc/[ll]^{{\bf Set}(-,{\sf 2})}
}$
\end{center}
whose induced monad on ${\sf Set}$ is (isomorphic to) the ultrafilter monad.	
\end{pro}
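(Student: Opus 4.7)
The plan is to parallel the construction of the basic dual adjunction in \ref{Stone}. For every de Vries algebra $A$ and every set $X$, I propose to establish a natural bijection
$$\Phi_{A,X}: {\bf deV}(A, {\sf P}(X)) \longrightarrow {\bf Set}(X, {\bf deV}(A, {\sf 2})), \quad \Phi(\varphi)(x)(a) = \varphi(a)(x),$$
where ${\sf P}(X)={\bf Set}(X,{\sf 2})$ carries its discrete de Vries structure (so that $\ll$ coincides with $\subseteq$, per \ref{deVriesAlgebraic}). In the forward direction, given a de Vries morphism $\varphi$, I would verify that each ``column'' $f(x):=\Phi(\varphi)(x):A\to{\sf 2}$ satisfies V1--V4: V1, V2 and V4 transfer pointwise from $\varphi$ to $f(x)$ because the bottom, binary meets, and arbitrary joins in ${\sf P}(X)$ are all computed pointwise, while V3 holds for $f(x)$ since $\ll$ reduces to $\leq$ in every discrete de Vries algebra. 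The inverse assignment, gluing a family $\{f(x)\}_{x\in X}\subseteq{\bf deV}(A,{\sf 2})$ into a single morphism $A\to{\sf P}(X)$, proceeds symmetrically.

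Naturality of $\Phi$ in $X$ is immediate from the pointwise definition, since ${\bf Set}(g,{\sf 2})$ is a Boolean (hence a de Vries) morphism between discrete algebras. Naturality in $A$ requires more care because the morphisms of ${\bf deV}$ compose via the operation $\diamond$ of \ref{deVriesAlgebraic} rather than ordinary map composition. For $\alpha:A'\to A$ in ${\bf deV}$ and $\varphi:A\to{\sf P}(X)$ with columns $f(x)$, the crucial identity
$$(\varphi\diamond\alpha)(a)(x)=\bigvee\{\varphi(\alpha(b))(x)\st b\ll a\}=(f(x)\diamond\alpha)(a)$$
would then follow from the pointwise computation of joins in ${\sf P}(X)$, yielding the required naturality square.

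To identify the induced monad on ${\bf Set}$, I would observe that its underlying endofunctor sends $X$ to ${\bf deV}({\sf P}(X),{\sf 2})$, which by Proposition \ref{Clust as hom} is naturally isomorphic to ${\sf Clust}({\sf P}(X))$. Since ${\sf P}(X)$ is discrete, its contact relation is $\smfc$; the cluster formula $\mathfrak c_{\mathfrak u}=\{a\st\forall b\in\mathfrak u:a\smfc b\}$ from \ref{deVries} degenerates to $\mathfrak c_{\mathfrak u}=\mathfrak u$ (since for $a\notin\mathfrak u$ one has $a^*\in\mathfrak u$ with $a\wedge a^*=0$), giving ${\sf Clust}({\sf P}(X))={\sf Ult}({\sf P}(X))=\beta X$. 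Tracing $\Phi_{{\sf P}(X),X}$ on the identity of ${\sf P}(X)$ shows the adjunction unit maps $x\mapsto\{M\st x\in M\}$, which under these identifications is the principal-ultrafilter unit of $\beta$; a parallel chase on the counit components identifies the multiplications. The main subtlety throughout is correctly tracking the $\diamond$-composition, but the pointwise structure on ${\sf P}(X)$ together with condition V4 reduces every such verification to the analogous identity in the two-chain ${\sf 2}$.
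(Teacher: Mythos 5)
Your proposal is correct and follows essentially the same route as the paper: the same exponential-transpose bijection $f(x)(a)=\varphi(a)(x)$ between ${\bf deV}(A,{\sf P}(X))$ and ${\bf Set}(X,{\bf deV}(A,{\sf 2}))$, followed by the identification ${\bf deV}({\sf P}(X),{\sf 2})\cong{\sf Clust}({\sf P}(X))={\sf Ult}({\sf P}(X))$ via Proposition \ref{Clust as hom} and the degeneration of clusters to ultrafilters on a discrete de Vries algebra. You merely spell out a few verifications (V1--V4 for the columns $f(x)$, and the $\diamond$-compatibility behind naturality in $A$) that the paper leaves as routine.
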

\begin{proof}
It suffices to establish, for every set $X$, a bijective correspondence between the de Vries morphisms $\p:A\to{\bf Set}(X,{\sf 2})$ and the maps $f:X\to{\bf deV}(A,{\sf 2})$, naturally so for every de Vries algebra $A$. Such a correspondence may be facilitated by the constraints
$$f(x)(a)=\p(a)(x)$$
for all $x\in X,\,a\in A$, {\em i.e.}, $f(x)(-)=\p(-)(x)$ for all $x\in X$. Indeed, given a de Vries morphism $\p$, the values of the map $f$ defined by these constraints are obviously de Vries morphisms, and conversely.
The naturality of this correspondence is easily established, similarly to the naturality proof of $\omega$, which completes the proof for the claimed adjunction.

To compute the induced monad, by Proposition \ref{Clust as hom} one has, for every set $X$, the natural isomorphisms
$${\bf deV}({\bf Set}(X,{\sf 2}),{\sf 2})\cong {\bf deV}({\sf P}(X),{\sf 2})\cong{\sf Clust}({\sf P}(X))={\sf Ult}({\sf P}(X)).$$
That the last identity holds more generally for all complete Boolean algebras was already mentioned at the end of \ref{deVries}. In the case of the Boolean algebra  ${\sf P}(X)$, it is easy to see that a  cluster $\mathfrak c$ in ${\sf P}(X)$ is indeed an ultrafilter, since  $\mathfrak c$ may be written as $\mathfrak c_{\mathfrak u}=\{F\subseteq X\st \forall\, G\in\mathfrak u\,(F\cap G\neq \emptyset)\}$, for some ultrafilter $\mathfrak u$ on $X$ (see \ref{deVries}). But every set $F\in\mathfrak c_{\mathfrak u}$ must actually lie in $\mathfrak u$, since otherwise we would have its complement lying in $\mathfrak u$ which, by definition of $\mathfrak c_{\mathfrak u}$, is impossible. Hence, $\mathfrak c=\mathfrak c_{\mathfrak u}=\mathfrak u$ is an ultrafilter on $X$. Proving that the resulting identity ${\sf Clust}({\sf P}(X))={\sf Ult}({\sf P}(X))$ extends to set maps involves only another routine check.

Since the bijective correspondence underlying the adjunction is the same as the correspondence of the fundamental adjunction of \ref{Stone} leading to the Stone duality, the pointwise definitions of the units and counits of the current adjunction are the same as the corresponding definitions for the fundamental adjunction. Consequently, the current adjunction induces a monad isomorphic to the monad induced by the fundamental adjunction, {\em i.e.}, the ultrafilter monad.
\end{proof}

Since the category of Eilenberg-Moore algebras of the ultrafilter monad on $\bf Set$ is (isomorphic to) the category ${\bf KHaus}$ (see \cite{Manes},\cite{MacLane},\cite{AHS}), the $\bf Set$-valued functor ${\bf deV}(-,{\sf 2})$ ``lifts'' to become the comparison functor
$${\bf deV}(-,{\sf 2}):{\bf deV}^{\rm op}\lra {\bf KHaus},$$
with the topology on ${\bf deV}(A,{\sf 2})$ to be induced by the counit $\tilde{\tau}_A$ of the adjunction, for every de Vries algebra $A$. But since ${\bf deV}(-,{\sf 2})\cong {\sf Clust}$ by Proposition \ref{Clust as hom}, this counit must arise as a composite of the map $\tau_A$ of \ref{deVries} with the natural bijection induced by $\omega_A$ of Proposition \ref{Clust as hom}, as in
\begin{center}
$\tilde{\tau}_A = \xymatrix{[\;A\ar[rr]^{\tau_A\qquad} & & {\sf RC}({\sf Clust}(A))\ar[rr]^{{\sf RC}(\omega_A)\;\;} & & {\sf RC}({\bf deV}(A,{\sf 2}))\;].}$
\end{center}
As a consequence of the de Vries duality theorem one obtains the following theorem:
\begin{theorem}\label{hom de Vries}
The de Vries dual equivalence may be presented as the comparison functor into the Eilenberg-Moore category of the monad induced by the adjunction of Proposition {\em \ref{second fundamental}}.	
\end{theorem}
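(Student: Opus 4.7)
The plan is to combine Proposition \ref{Clust as hom}, which gives the natural isomorphism $\omega:{\bf deV}(-,{\sf 2})\to {\sf Clust}$ of ${\bf Set}$-valued functors, with the de Vries Duality Theorem (Theorem \ref{newdeVries}). Let $K:{\bf deV}^{\rm op}\to {\bf Set}^\beta$ denote the comparison functor of the adjunction $S={\bf Set}(-,{\sf 2})\dashv {\bf deV}(-,{\sf 2})=T$ from Proposition \ref{second fundamental}. By definition $K(A)$ is ${\bf deV}(A,{\sf 2})$ equipped with the $\beta$-algebra structure $T(\tilde\varepsilon_A)$ coming from the counit $\tilde\varepsilon_A:A\to ST(A)={\bf Set}({\bf deV}(A,{\sf 2}),{\sf 2})$, and $K$ acts on morphisms as $T$. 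Identifying ${\bf Set}^\beta$ with ${\bf KHaus}$, the aim is to establish a natural isomorphism $K\cong {\sf Clust}$ of ${\bf KHaus}$-valued functors; since ${\sf Clust}$ is known to be a dual equivalence, this delivers the asserted presentation.

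First I would realise ${\sf Clust}$ itself as a comparison functor for an adjunction canonically isomorphic to the one from Proposition \ref{second fundamental}. Composing the de Vries dual equivalence ${\sf RC}\dashv{\sf Clust}:{\bf deV}^{\rm op}\to {\bf KHaus}$ with the monadic adjunction $F^\beta\dashv U:{\bf Set}\to {\bf KHaus}$ produces an adjunction $({\sf RC}\circ F^\beta)\dashv (U\circ {\sf Clust})$ between ${\bf deV}^{\rm op}$ and ${\bf Set}$. Because ${\sf Clust}$ is an equivalence with quasi-inverse ${\sf RC}$, the composite ${\sf Clust}\circ {\sf RC}$ is canonically isomorphic (via $\sigma$) to the identity, so this composite adjunction induces on ${\bf Set}$ a monad canonically isomorphic to $\beta$. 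The standard fact that the composite of an equivalence with a monadic adjunction is monadic, with comparison functor naturally isomorphic to the equivalence itself, identifies the comparison functor of this composite as ${\sf Clust}:{\bf deV}^{\rm op}\to {\bf KHaus}={\bf Set}^\beta$. By Proposition \ref{Clust as hom} the natural isomorphism $\omega:T\to U\circ {\sf Clust}$ matches the right adjoint $T$ of Proposition \ref{second fundamental} with the right adjoint $U\circ{\sf Clust}$ of the composite adjunction. Since an adjunction is determined, up to natural isomorphism, by either adjoint, the two adjunctions are canonically isomorphic, induce the same monad, and share the same comparison functor up to natural isomorphism; hence $K\cong {\sf Clust}$.

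The main technical point is to verify that the compact Hausdorff topology that $K$ assigns to ${\bf deV}(A,{\sf 2})$ via the $\beta$-algebra structure $T(\tilde\varepsilon_A)$ is carried by $\omega_A$ onto the de Vries topology on ${\sf Clust}(A)$ recalled in \ref{deVries}. Unwinding definitions, the counit $\tilde\varepsilon_A:A\to {\bf Set}({\bf deV}(A,{\sf 2}),{\sf 2})\cong {\sf P}({\bf deV}(A,{\sf 2}))$ sends $a\in A$ to the set $\{\p\in {\bf deV}(A,{\sf 2}):\p(a)=1\}$, which under $\omega_A$ corresponds precisely to $\{\mathfrak c\in {\sf Clust}(A):a\in \mathfrak c\}=\tau_A(a)$, a basic closed set for the de Vries topology. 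Since both topologies are generated by exactly these sets, the bijection $\omega_A$ is a homeomorphism, and naturality in $A$ then promotes $\omega$ to the required natural isomorphism in ${\bf KHaus}$. The identity $\tilde\tau_A={\sf RC}(\omega_A)\circ \tau_A$ displayed just before the statement of the theorem records this compatibility at the level of the counits, so with this verification the proof is complete.
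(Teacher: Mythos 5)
Your overall strategy is sound and, in substance, coincides with the paper's: both arguments reduce the theorem to Proposition \ref{Clust as hom}, Proposition \ref{second fundamental}, Manes' identification of ${\bf Set}^{\beta}$ with ${\bf KHaus}$, and the de Vries duality theorem itself (which the paper explicitly acknowledges using ``in a significant way''). Your packaging via the standard facts that naturally isomorphic right adjoints give isomorphic adjunctions and hence comparison functors agreeing up to the induced identification of Eilenberg--Moore categories, and that composing a monadic adjunction with an adjoint equivalence yields a monadic adjunction whose comparison functor is (up to isomorphism) that equivalence, is a legitimate and arguably tidier organization of what the paper does more informally; your first two paragraphs already constitute a complete proof.

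The final paragraph, however, which you bill as ``the main technical point'', contains a genuine error. The set $\tilde\varepsilon_A(a)=\{\varphi\in{\bf deV}(A,{\sf 2})\mid \varphi(a)=1\}$ does \emph{not} correspond under $\omega_A$ to $\tau_A(a)=\{\mathfrak c\mid a\in\mathfrak c\}$. By definition $a\in\omega_A(\varphi)$ means $\varphi(a^*)=0$, and for a de Vries morphism $\varphi:A\to{\sf 2}$ one only has $\varphi(a)=1\Rightarrow\varphi(a^*)=0$ (from $\varphi(a^*)\le\varphi(a)^*$), not the converse. Hence $\omega_A^{-1}(\tau_A(a))=\{\varphi\mid\varphi(a^*)=0\}=\tilde\tau_A(a)$ is the basic \emph{regular closed} set, while $\{\varphi\mid\varphi(a)=1\}$ is only its interior, a basic open set -- exactly the distinction recorded in the remark following Theorem \ref{hom de Vries} in the paper. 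In addition, your assertion that the Eilenberg--Moore topology on ${\bf deV}(A,{\sf 2})$ -- which is defined by the convergence map $T\tilde\varepsilon_A:\beta({\bf deV}(A,{\sf 2}))\to{\bf deV}(A,{\sf 2})$, not directly by the sets $\tilde\varepsilon_A(a)$ -- is ``generated by exactly these sets'' is not automatic and would itself require an argument. Neither point is fatal, because once the two adjunctions are identified via $\omega$ the homeomorphism $K(A)\cong{\sf Clust}(A)$ is forced by the abstract argument; but as written, the concrete verification you single out as the crux is incorrect and should either be fixed (replacing $\tau_A(a)$ by its interior and justifying the description of the algebra topology) or simply deleted.
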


\begin{rem}
\rm
(1) We note that here we derived Theorem \ref{hom de Vries} by {\em using} the de Vries duality theorem in a significant way. But our presentation shows that this may be avoided. Indeed, since the category ${\bf deV}$ has equalizers, the general theory of monads describes how to obtain a left adjoint to the comparison functor ${\bf deV}(-,{\sf 2}):{\bf deV}^{\rm op}\lra {\bf KHaus}$ induced by the adjunction of Proposition \ref{second fundamental}, providing also an explicit description of the units and counits of the ``lifted'' adjunction. The core of the proof of the duality theorem then rests on the need to establish that these units and counits are all isomorphisms, for which one may use some of the key tools provided in Section 4.

(2) To the benefit of readers not familiar with basic monad theory and the Eilenberg-Moore construction, here is a more elaborate description of the functors and natural isomorphisms appearing in Theorem \ref{hom de Vries}, as they emerge from the categorical setting.

Transporting, for every $A\in|\DHC|$, the topological structure of ${\sf Clust}(A)$ onto $\DHC(A,\2)$, via the bijection $\om_A$, one obtains the family $\{\om_A\inv(\tau_A(a))\st a\in A\}$
 as a base for closed sets for the topology on $\DHC(A,\2)$ under which the map $\om_A$ becomes a homeomorphism. Then, for every $\a\in\DHC(A,A\ap)$, the map $$\DHC(\a,\2):\DHC(A\ap,\2)\lra\DHC(A,\2),\;\p'\longmapsto \p'\diamond\a,$$ is continuous, by the commutativity of the diagram in the proof of Proposition \ref{Clust as hom}. This defines the functor
 $$\DHC(-,\2):\DHC^{\rm op}\lra\HC,$$
 which, qua definition, is isomorphic to the de Vries equivalence $\sf Clust$ and, thus, produces the {\em hom version of the de Vries duality}.
 The counits
$$\tilde{\tau}_A=\rc(\om_A)\circ\tau_A:A\lra\rc({\bf deV}(A,{\sf 2}))$$
of the dual adjunction may (as defined above) be computed as
 $$\tilde{\tau}_A(a)=\omega_A\inv(\{\mathfrak c\in{\sf Clust}(A)\st a\in \mathfrak c \})=\{\p\in{\bf deV}(A,{\sf 2})\st\p(a^*)=0\},$$
for all $a\in A$. Their complements in ${\bf deV}(A,\sf 2)$ are given by\;
${\bf deV}(A,\sf 2)\stm \tilde{\tau}_A(a)=\{\p\in {\bf deV}(A,{\sf 2})\st\p(a^*)=1\}$, and their interiors by $$\int_{{\bf deV}(A,{\sf2})}(\tilde{\tau}_A(a))=\{\p\in {\bf deV}(A,{\sf 2})\st\p(a)=1\},$$ for all $a\in A$; they form a base for open sets of ${\bf deV}(A,{\sf 2})$.
 In the hom-version, the adjunction units, for every compact Hausdorff space $X$ (see \ref{deVries}), are (by necessity) given by
\begin{center}
$\tilde{\sigma}_X = \xymatrix{[\;X\ar[rr]^{\sigma_X\qquad} & & {\sf Clust}({\sf RC}(X))\ar[rr]^{\omega_{{\sf RC}(X)}\inv\;\;} & & {\bf deV}({\sf RC}(X),{\sf 2}))\;],}$
\end{center}
so that, for all $x\in X$ and $F\in{\sf RC}(X)$, one has
$$\tilde{\sigma}_X(x)(F)=1\iff x\in {\rm int}_X(F).$$
\end{rem}

\section{Extending de Vries' duality to  Tychonoff spaces}

Having presented the de Vries duality in the form
\begin{center}
$\xymatrix{\AA^{\rm op}={\bf deV}^{\rm op}\ar@/^0.6pc/[rr]^{\quad T\,=\,{\bf deV}(-,{\sf 2})} & {\scriptstyle \simeq} & {\bf KHaus}=\XX\ar@/^0.6pc/[ll]^{\quad S\,=\,{\sf RC}}
}$
\end{center}
(see Section 6), we now wish to extend it to the category $\YY={\bf Tych}$ of all Tychonoff spaces and continuous maps, through an application of Corollary \ref{duality refl}. As ${\bf Tych}$ is reflective in ${\bf KHaus}$, to this end one considers the class $\JJ$ of the Stone-\v{C}ech-compactifications of Tychonoff spaces, that is: $\JJ$ contains precisely all continuous maps
$j:Y\to X$ with $Y\in{|\bf Tych}|$ and $ X\in |{\bf KHaus}|$, satisfying the universal property that every continuous map $f:Y\to Z$ with $Z\in |{\bf KHaus}|$ factors through $j$, by a uniquely determined continuous map $X\to Z$; equivalently, the maps $j:Y\to X$ in $\JJ$ are
%the restrictions of homeomorphisms $\beta Y\to X$ to the subspace $Y$, for a chosen Stone-\v{Cech} compactification $\beta Y$ of the Tychonoff space $Y$.
dense embeddings,  with $Y\in{|\bf Tych}|$ and $ X\in |{\bf KHaus}|$ such that $j(Y)$ is $C^*$-embedded in $X$.

According to Construction \ref{dual construction}, we now form the category
${\sf D}(\AA,\JJ,\XX)$ whose

\begin{itemize}

\item objects
are pairs $(A,j)$ with a de Vries algebra $A$ and $j:Y\to {\bf deV}(A,{\sf 2})$ in $\JJ$;

\item  morphisms $(\a,f):(A,j)\lra(A',j')$ are given by de Vries
morphisms $\a:A\lra A'$ and continuous maps $f:Y'\lra Y$ in ${\bf Tych}$
with  $T\a\circ j'=j\circ f$:
\begin{center}
$\xymatrix{TA & TA'\ar[l]_{T\a}\\
            Y\ar[u]^{j} & Y'\ar[l]^{f}\ar[u]_{j'}}$
            \end{center}

\item    composition in ${\sf D}(\AA,\JJ,\XX)$ proceeds by the horizontal pasting of diagrams;

\item the identity morphism of a ${\sf D}(\AA,\JJ,\XX)$-object $(A,j)$ is $(1_A,1_{\dom(j)})$.
\end{itemize}
The category ${\bf deV}$ is fully and coreflectively embedded into $\BB={\sf D}(\AA,\JJ,\XX)$ via
$$I:\AA\to\BB,\;(\a:A\ra A')\longmapsto (\;I\a=(\a,T\a):(A, 1_{TA})\ra(A',1_{TA'})\;).$$
From Corollary \ref{duality refl} we obtain a first extension statement for the de Vries duality:

\begin{pro}\label{deV application}
The de Vries duality extends to the duality
$$\xymatrix{{\sf D}(\AA,\JJ,\XX)^{\rm op}\ar@/^0.6pc/[rr]^{\quad\tilde{T}} & {\scriptstyle \simeq} & {{\bf Tych}}\,,\ar@/^0.6pc/[ll]^{\quad\tilde{S}}
}$$
with object assignments $\tilde{T}:(A,j:Y\to TA)\longmapsto Y$ and (see Thm. {\em \ref{hom de Vries}} for notation)
$$\tilde{S}:Y\longmapsto({\sf RC}(\beta Y),\,[\xymatrix{Y\ar@{^(->}[r] & \beta Y\ar[r]^{\tilde{\sigma}_{\beta Y}\qquad\quad} & {\bf deV}({\sf RC}(\beta Y),{\sf 2})
}]).$$
\end{pro}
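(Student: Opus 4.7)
The proof plan is to apply Corollary \ref{duality refl} directly to the hom-version of the de Vries dual equivalence (Theorem \ref{hom de Vries}), taking $\AA={\bf deV}$, $\XX={\bf KHaus}$, $\YY={\bf Tych}$, with $S={\sf RC}$ and $T={\bf deV}(-,{\sf 2})$. To invoke the corollary, one first needs the reflective full embedding $J:{\bf KHaus}\hookrightarrow{\bf Tych}$; this is classical, with left adjoint given by the Stone-\v{C}ech functor $E=\beta$ and unit $\iota_Y:Y\to\beta Y$ the dense $C^*$-embedding characterized by the usual universal property.

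Next, I would identify the class $\JJ$ described in the text with the class $\JJ_{\iota}$ of Corollary \ref{duality refl}. A continuous map $j:Y\to X$ with $Y\in|{\bf Tych}|$, $X\in|{\bf KHaus}|$ has the stated universal property if and only if there is a (necessarily unique) isomorphism $\alpha:\beta Y\to X$ in ${\bf KHaus}$ with $\alpha\circ\iota_Y=j$; this is precisely the definition of $\JJ_{\iota}$. The equivalent characterization (dense embedding with $j(Y)$ being $C^*$-embedded) is standard.

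With the hypotheses in place, Corollary \ref{duality refl} immediately produces a dual equivalence
\[
\xymatrix{\BB^{\rm op}\ar@/^0.4pc/[rr]^{\tilde{T}} & {\scriptstyle \simeq} & {\bf Tych}\ar@/^0.4pc/[ll]^{\tilde{S}}}
\]
with $\BB={\sf D}({\bf deV},\JJ,{\bf KHaus})$, that lifts the de Vries duality along $I$ and $J$. The object formula $\tilde{T}(A,j:Y\to TA)=Y$ follows directly from the corollary's pointwise recipe ($\tilde T$ is the projection on objects and morphisms). For $\tilde S$, the recipe $\tilde{S}(Y)=(SEY,\eta_{EY}\circ\iota_Y)$ translates, in our instantiation, to $({\sf RC}(\beta Y),\,\tilde{\sigma}_{\beta Y}\circ\iota_Y)$, because $SEY={\sf RC}(\beta Y)$ and the unit $\eta_{\beta Y}$ of the hom-version de Vries duality at the compact Hausdorff space $\beta Y$ is $\tilde{\sigma}_{\beta Y}$, as explicitly recorded after Theorem \ref{hom de Vries}.

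There is no real obstacle here: the entire argument is a bookkeeping exercise routing the Stone-\v{C}ech reflection through the categorical template of Corollary \ref{duality refl}. The only mild subtlety to check carefully is the identification $\JJ=\JJ_{\iota}$, which relies on the observation that any map with the universal property is, up to unique ${\bf KHaus}$-isomorphism of its codomain, a Stone-\v{C}ech unit; once this is granted, the stated duality and the explicit form of $\tilde S$ and $\tilde T$ on objects follow mechanically from the corollary.
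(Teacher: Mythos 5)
Your proposal is correct and follows essentially the same route as the paper: the paper likewise obtains this proposition by applying Corollary \ref{duality refl} to the hom-form of the de Vries duality ($T={\bf deV}(-,{\sf 2})$, $S={\sf RC}$), with $E=\beta$ the Stone-\v{C}ech reflector, $\JJ=\JJ_{\iota}$ the class of compactification units (up to ${\bf KHaus}$-isomorphism of the codomain), and the unit $\eta_{\beta Y}=\tilde{\sigma}_{\beta Y}$ as recorded after Theorem \ref{hom de Vries}. Nothing is missing.
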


\begin{rem}\label{avoiding beta}
\rm
The direct reference to the the Stone-\v{C}ech compactification in the definition of the functor $\tilde{S}$ may be avoided, as follows. Let us first note that, while the Boolean algebras $\rc(Y)$ and $\rc(\b Y)$ are isomorphic for any Tychonoff space $Y$ (see \ref{projective}), generally the contact algebras $(\rc(Y),\sfr_Y)$ and $(\rc(\b Y),\sfr_{\b Y})$ are not
as
such, unless $Y$ is a normal Hausdorff space (thanks to Urysohn's Lemma). However, under the relation $\sfr_Y^{\b}$ defined for all $F,G\in\rc(Y)$ by

\medskip

\begin{tabular}{ll}
$F\sfr_Y^\b G$ & $\iff F$  and $G$ are not completely separated\\
& $\iff \not{\exists}\;f:Y\to[0,1]$ continuous with $f(F)=0$ and $f(G)=1,$\\
\end{tabular}

\medskip

 \noindent the contact algebras  $(\rc(Y),\sfr_Y^\b)$ and $(\rc(\b Y),\sfr_{\b Y})$ become isomorphic. Consequently, the object assignment of the functor $\ti{S}$ is equivalently described by
 $$\tilde{S}:Y\longmapsto((\rc(Y),\sfr_Y^\b) ,\,[\xymatrix{Y\ar[r]^{\tilde{\sigma}_{Y}\qquad\qquad} & {\bf deV}((\rc(Y),\sfr_Y^\b) ,{\sf 2})}]).$$
 \end{rem}

 We now embark on describing the objects $(A,j:Y\to{\bf deV}(A,{\sf 2}))$ of ${\sf D}(\AA,\JJ,\XX)$ in a more algebraic fashion, without direct reference to the categories $\XX={\bf KHaus}$ or ${\bf Tych}$. First of all, replacing $Y$ by the homeomorphic subspace $j(Y)$ of ${\bf deV}(A,{\sf 2})$ one observes that $(A,j)$ is isomorphic to $(A,\,j(Y)\hookrightarrow{\bf deV}(A,{\sf 2}))$ in ${\sf D}(\AA,\JJ,\XX)$. Hence, without loss of generality, we may assume $Y$ to be a subspace of the compact Hausdorff space ${\bf deV}(A,{\sf 2})$. The task is then to express, in algebraic terms, the status of ${\bf deV}(A,{\sf 2})$ as a largest compactification of $Y$, so that (1) $Y$ is dense in ${\bf deV}(A,{\sf 2})$, and (2) any dense embedding of $Y$ into a compact Hausdorff space may be extended to a continuous map on ${\bf deV}(A,{\sf 2})$. To this end we use for all $a\in A$ the abbreviation
 $$Y_a=Y\cap\tilde{\tau}(a)=\{\p\in Y\st \p(a^*)=0\}   $$
 and employ the following terminology, which will be justified by Proposition \ref{deVries pair}.
 \begin{defi}\label{universaldeVpair}
 \rm
 (1) For a de Vries algebra $A$ and a subset $Y\sbe{\bf deV}(A,{\sf 2})$ we call $(A,Y)$ a {\em de Vries pair} if for every element $a>0$ in $A$ one has some $\p\in Y$ with $\p(a)=1$.

 (2) A de Vries pair $(A,Y)$ is called {\em universal}\/ if for any other de Vries pair $(A',Y')$ and any bijection $h:Y\to Y'$ with	$\{h\inv(Y'_{a'})\st a'\in A'\}=\{Y_a\st a\in A\}$ there is a de Vries morphism $\a:A'\to A$ with $h(\p)=\p\diamond\a$ for all $\p\in Y$.
 \end{defi}

 \begin{pro}\label{deVries pair} {\em (1)} For a de Vries algebra $A$ and a subset\/ $Y$ of\/ ${\bf deV}(A,{\sf 2})$, the pair $(A,Y)$ is a de Vries pair if, and only if, $Y$ is a dense subset of the space\/ ${\bf deV}(A,{\sf 2})$.

 {\em (2)} A de Vries pair $(A,Y)$ is universal if, and only if, the space ${\bf deV}(A,{\sf 2})$ serves as a Stone-\v{C}ech compactification of its subspace $Y$.	\end{pro}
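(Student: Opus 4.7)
For part (1), the plan is to use the description of the basic open sets of $K := \DHC(A,\2)$ recalled at the end of Section 6: for each $a \in A$, $\int_K(\tilde{\tau}_A(a)) = \{\p \in K : \p(a) = 1\}$. Since $\tilde{\tau}_A: A \to \rc(K)$ is a Boolean isomorphism by the de Vries duality, for each $a > 0$ the set $\tilde{\tau}_A(a)$ is a non-empty regular closed subset of $K$; from the identity $F = \cl(\int_K(F))$ characterising regular closedness it has non-empty interior, while $\int_K(\tilde{\tau}_A(0)) = \emptyset$. Thus $Y \cap \int_K(\tilde{\tau}_A(a)) \neq \emptyset$ for every $a > 0$ if and only if $Y$ meets every non-empty basic open set, equivalently, $Y$ is dense in $K$; but the former is exactly the defining property of a de Vries pair.

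For part (2), my first step is to translate the universal condition through the hom-presentation of the de Vries duality: for de Vries algebras $A, A'$, a de Vries morphism $\a: A' \to A$ corresponds to the continuous map $\DHC(\a,\2): K \to K' := \DHC(A',\2)$, $\p \mapsto \p \diamond \a$, so $h(\p) = \p \diamond \a$ for all $\p \in Y$ says exactly that $\DHC(\a,\2)$ is the (necessarily unique) continuous extension of the bijection $h: Y \to Y'$. My second step is to decode the trace condition $\{h\inv(Y'_{a'}) : a' \in A'\} = \{Y_a : a \in A\}$. Combining the bijection $\tilde{\tau}_A: A \to \rc(K)$ with the Boolean isomorphism $F \mapsto F \cap Y$ from $\rc(K)$ to $\rc(Y)$ recalled at the end of \ref{projective}, one has $\{Y_a : a \in A\} = \rc(Y)$ and symmetrically $\{Y'_{a'} : a' \in A'\} = \rc(Y')$; hence the trace condition amounts to $h$ being a bijection whose preimage operation restricts to a bijection $\rc(Y') \to \rc(Y)$. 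Since $Y$ and $Y'$ are Tychonoff (being dense subspaces of compact Hausdorff spaces), regular closed sets form a generating family for closed sets in each (every closed set is an intersection of regular closed sets), so this bijection forces both $h$ and $h\inv$ to be continuous; thus $h$ is a homeomorphism, and the converse is clear.

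Combining these translations, universality of $(A,Y)$ is equivalent to: every homeomorphism $h: Y \to Y'$ onto a dense subspace $Y'$ of a compact Hausdorff space of the form $\DHC(A',\2)$ extends to a continuous map $K \to K'$. I now invoke the standard characterisation of the Stone-\v{C}ech compactification as the maximum compactification: $K$ is the Stone-\v{C}ech compactification of $Y$ if and only if, for every compactification $cY$ of $Y$, the inclusion $Y \hra cY$ extends continuously to $K$. For $(\Leftarrow)$, assuming $K$ is the Stone-\v{C}ech compactification, any such $h$ is in particular a continuous map into a compact Hausdorff space and hence extends to a continuous $H: K \to K'$, whereupon the hom-duality produces the required $\a$ with $H = \DHC(\a,\2)$. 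For $(\Rightarrow)$, given an arbitrary compactification $cY$, set $A' := \rc(cY)$ equipped with its canonical de Vries structure and identify $cY$ with $K' = \DHC(A',\2)$ via the homeomorphism $\tilde{\sigma}_{cY}$; the image $Y'$ of $Y$ in $K'$ yields a de Vries pair $(A',Y')$, and the induced bijection $h: Y \to Y'$ is a homeomorphism, so in particular satisfies the trace condition. Universality then produces $\a: A' \to A$ whose associated continuous map $\DHC(\a,\2): K \to K' \cong cY$ extends the inclusion $Y \hra cY$, establishing the maximality of $K$. The main obstacle is the trace-condition analysis in the middle paragraph; the rest is routine manipulation of the already-established de Vries duality.
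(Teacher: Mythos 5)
Your proof is correct and follows essentially the same route as the paper: part (1) is the observation that the sets $\{\p\st\p(a)=1\}=\int(\tilde{\tau}_A(a))$ form an open base of ${\bf deV}(A,{\sf 2})$, and part (2) translates the trace condition into ``$h$ is a homeomorphism'' (via the identification of the $Y_a$ with ${\sf RC}(Y)$) and then matches universality against the maximality property of the Stone--\v{C}ech compactification through the hom-form of the de Vries duality. The extra details you supply (non-empty regular closed sets have non-empty interior; regular closed sets generate the closed sets of a regular space) are correct and only make explicit what the paper leaves implicit.
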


 \begin{proof}
 (1) Since the sets $\{\p\in {\bf deV}(A,{\sf 2})\st\p(a)=1\}\,(a\in A)$ form a base for open sets in ${\bf deV}(A,{\sf 2})$, the statement is obvious.

 (2) Assuming first the universality of the de Vries pair $(A,Y)$ and considering any  compactification $c:Y\to X$ one has a homeomorphism $g:X\to{\bf deV}(A',{\sf 2})$ with a de Vries algebra $A'$. The map $g\circ c$ restricts to a homeomorphism $h:Y\to Y':=g(c(Y))$, which implies $\{h\inv(F')\st F'\in{\sf RC}(Y')\}={\sf RC}(Y)$. With \ref{projective}, this equality reads as $\{h\inv(Y'_{a'})\st a'\in A'\}=\{Y_a\st a\in A\}$. Therefore, the defining property of a universal de Vries pair gives a de Vries morphism $\a:A'\to A$ which makes $h$ a restriction of the continuous map ${\bf deV}(\a,{\sf 2}):{\bf deV}(A,{\sf 2})\to {\bf deV}(A',{\sf 2}),\,\p\mapsto\p\diamond\a$. Equivalently, the map $g\inv\circ{\bf deV}(\a,{\sf 2}):{\bf deV}(A,{\sf 2})\to X$ is a continuous extension of $c$, as desired.

 Conversely, let ${\bf deV}(A,{\sf 2})$ be a Stone-\v{C}ech compactification of its subspace $Y$ and consider a de Vries pair $(A',Y')$, along with a bijection $h:Y\to Y'$ satisfying the equality $\{h\inv(Y'_{a'})\st a'\in A'\}=\{Y_a\st a\in A\}$. This equality says precisely that $h$ and $h\inv$ map the chosen bases of closed sets in $Y$ and $Y'$ given by the regular closed sets onto each other, so that $h$ must be a homeomorphism.  Hence, enlarging its codomain to ${\bf deV}(A',{\sf 2})$, we obtain another compactification of $Y$ which, by hypotheis, must factor through a (uniquely determined) continuous map $f:{\bf deV}(A,{\sf 2})\to{\bf deV}(A',{\sf 2})$. But by the de Vries duality, we may write $f$ as $f={\bf deV}(\a,{\sf 2})$ for a (uniquely determined) de Vries morphism $\a:A'\to A$, so that $h$ becomes a restriction of ${\bf deV}(\a,{\sf 2})$, as needed.
 \end{proof}

 \begin{rem}\label{uniqueness}
 \rm The proof above shows that, in the defining property of a universal de Vries pair $(A,Y)$, the de Vries morphism $\a$ is uniquely determined by the bijection $h$.
 \end{rem}

 Definition \ref{universaldeVpair} gives us the objects of the category
  $${\bf UdeV} $$
  of universal de Vries pairs whose morphisms $\a:(A,Y)\to(A',Y')$ are de Vries morphisms $\a:A\to A'$ with $\p'\diamond\a\in Y$ for all $\p'\in Y'$;
  they get composed as in ${\bf deV} $. %is the same as in the category ${\bf deV} $.
  With
  $$f_{\a}:Y'\to Y$$
   denoting the restriction of the map ${\bf deV}(\a,{\sf 2})$, by Proposition \ref{deVries pair} one then has the ${\sf D}(\AA,\JJ,\XX)$ morphism $(\a,f_{\a}):(A,j_Y)\to(A',j_{Y'})$, with inclusion maps $j_Y, j_{Y'}$, as visualized by
\begin{center}
	$\xymatrix{{\bf deV}(A,{\sf 2}) && {\bf deV}(A',{\sf 2})\ar[ll]_{{\bf deV}(\a,{\sf 2})}\\
	Y\ar@{^(->}[u]^{j_Y} && Y'\;,\ar[ll]_{f_{\a}}\ar@{^(->}[u]_{j_{Y'}}\\
	} $	
	\end{center}
		This defines the full embedding
	$$\Psi:{\bf UdeV}\lra{\sf D}(\AA,\JJ,\XX), \;(A,Y)\longmapsto(A,j_Y).$$
	Proposition \ref{deVries pair} allows us to define also the functor
	$$\Phi: {\sf D}(\AA,\JJ,\XX)\lra{\bf UdeV}, \;(A,j:Y\to{\bf deV}(A,{\sf 2}))\longmapsto(A,j(Y)),$$
with its obvious definition on morphisms. Clearly then, $\Phi\circ\Psi={\rm Id}_{\bf UdeV}$ and $\Psi\circ\Phi\cong{\rm Id}_{	{\sf D}(\AA,\JJ,\XX)}$. Together with Proposition \ref{deV application}, this proves the following duality theorem for the category of Tychonoff spaces.	

\begin{theorem}\label{fdttychth}
The category ${\bf UdeV}$ is an equivalent retract of the category\/ ${\sf D}(\AA,\JJ,\XX)$ and therefore dually equivalent to the category $\Tych$.
\end{theorem}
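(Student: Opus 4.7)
My plan is to establish the theorem by verifying that the pair of functors $\Psi:{\bf UdeV}\to {\sf D}(\AA,\JJ,\XX)$ and $\Phi:{\sf D}(\AA,\JJ,\XX)\to{\bf UdeV}$, whose object assignments are already given in the paragraph preceding the theorem statement, exhibit ${\bf UdeV}$ as an equivalent retract of ${\sf D}(\AA,\JJ,\XX)$; the claimed dual equivalence with $\Tych$ then follows by composition with the equivalence of Proposition \ref{deV application}.

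First, I would confirm that $\Psi$ is well-defined. If $(A,Y)\in|{\bf UdeV}|$, then by Proposition \ref{deVries pair}(1) the subset $Y$ is dense in ${\bf deV}(A,{\sf 2})$, and by Proposition \ref{deVries pair}(2) the space ${\bf deV}(A,{\sf 2})$ is a Stone-\v{C}ech compactification of $Y$; hence the inclusion $j_Y\in\JJ$, so $(A,j_Y)$ is a legitimate ${\sf D}(\AA,\JJ,\XX)$-object. For a morphism $\a:(A,Y)\to(A',Y')$ in ${\bf UdeV}$, the condition $\p'\diamond\a\in Y$ for all $\p'\in Y'$ ensures that the restriction $f_\a:Y'\to Y$ of ${\bf deV}(\a,{\sf 2})$ is well-defined and continuous, and the identity $j_Y\circ f_\a=T\a\circ j_{Y'}$ holds by construction, giving $(\a,f_\a):(A,j_Y)\to(A',j_{Y'})$ in ${\sf D}(\AA,\JJ,\XX)$. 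Functoriality of $\Psi$ is then immediate from the definition of composition in ${\sf D}(\AA,\JJ,\XX)$, and the full embedding property will follow once I have (see below) the fullness verification.

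Next, I would verify that $\Phi$ is well-defined. Given $(A,j:Y\to {\bf deV}(A,{\sf 2}))\in|{\sf D}(\AA,\JJ,\XX)|$, the map $j$ restricts to a homeomorphism $Y\to j(Y)$ onto a dense subspace of ${\bf deV}(A,{\sf 2})$, and since $j\in\JJ$, the space ${\bf deV}(A,{\sf 2})$ realizes the Stone-\v{C}ech compactification of $j(Y)$. Proposition \ref{deVries pair} then gives that $(A,j(Y))$ is a universal de Vries pair. On a morphism $(\a,f):(A,j)\to(A',j')$, the commutativity $j\circ f=T\a\circ j'$ forces $\a$ to satisfy $\p'\diamond\a\in j(Y)$ for every $\p'\in j'(Y')$, so $\a$ defines a ${\bf UdeV}$-morphism $(A,j(Y))\to(A',j'(Y'))$; this simultaneously shows fullness of $\Psi$, since in the case $j=j_Y$, $j'=j_{Y'}$ the morphism $f$ is forced to equal $f_\a$. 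Faithfulness of $\Psi$ is trivial.

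Finally, it is clear from the definitions that $\Phi\circ\Psi={\rm Id}_{\bf UdeV}$. For the converse, given $(A,j:Y\to{\bf deV}(A,{\sf 2}))$, the corestriction $j:Y\to j(Y)$ is a homeomorphism, and it is immediate that the pair $\nu_{(A,j)}:=(1_A,j):(A,j_{j(Y)})\to(A,j)$ is an isomorphism in ${\sf D}(\AA,\JJ,\XX)$ whose component realizes $\Psi\Phi(A,j)\cong(A,j)$; naturality in $(A,j)$ is a direct check from the definition of morphisms in ${\sf D}(\AA,\JJ,\XX)$. Combining $\Phi\circ\Psi={\rm Id}$ and $\Psi\circ\Phi\cong{\rm Id}$ with Proposition \ref{deV application} yields the asserted dual equivalence ${\bf UdeV}^{\rm op}\simeq\Tych$. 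The only mildly delicate point in the whole argument is checking the fullness of $\Psi$, where one must translate the equality $j_Y\circ f=T\a\circ j_{Y'}$ in ${\bf KHaus}$ into the identity $f(\p')=\p'\diamond\a$ for $\p'\in Y'$, but this is immediate once $Y,Y'$ are regarded as subspaces of the respective hom-spaces.
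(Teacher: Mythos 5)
Your proposal is correct and follows exactly the route the paper takes: the paper's proof of this theorem consists of the paragraph preceding it, where the functors $\Psi$ and $\Phi$ are introduced and the identities $\Phi\circ\Psi={\rm Id}_{\bf UdeV}$ and $\Psi\circ\Phi\cong{\rm Id}$ are asserted (with Proposition \ref{deVries pair} supplying the well-definedness and Proposition \ref{deV application} supplying the duality with $\Tych$). You have merely filled in the routine verifications that the paper labels as ``clear,'' and all of your checks (well-definedness, fullness of $\Psi$, and the natural isomorphism $\Psi\Phi\cong{\rm Id}$ via the corestriction of $j$) are accurate.
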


The composite dual equivalence
\begin{center}
$
\xymatrix{{\bf UdeV}^{\rm op}\ar@/^0.6pc/[rr]^{\Psi\quad} & \scriptstyle{\simeq}
 &{\sf D}(\AA,\JJ,\XX)^{\rm op}\ar@/^0.6pc/[rr]^{\quad\tilde{T}}\ar@/^0.6pc/[ll]^{\Phi\quad } & {\scriptstyle \simeq} & {{\bf Tych}}\ar@/^0.6pc/[ll]^{\quad\tilde{S}}}$
\end{center}
and its natural isomorphisms $\hat{\tau},\hat{\s}$ arising from $\tilde{\tau},\tilde{\s}$ are easily described by
\begin{itemize}
\item $\tilde{T}\circ\Psi:\quad [\a:(A,Y)\to(A',Y')]\longmapsto	[f_{\a}:Y'\to Y];   $
\item $\Phi\circ\tilde{S}:\quad[f:Y'\to Y]\mapsto[ {\sf RC}(\b f):({\sf RC}(\b Y),\tilde{\s}_{\b Y}(Y))\to ({\sf RC}(\b Y'),\tilde{\s}_{\b Y'}(Y'))];$
\item $\hat{\tau}_{(A,Y)}:(A,Y)\lra\Phi\tilde{S}\tilde{T}\Psi(A,Y)=({\sf RC}({\bf deV}(A,{\sf 2})),\{\p\diamond\tilde{\tau}_A\inv\st\p\in Y\})   $, where, for simplicity, one assumes $\b Y={\bf deV}(A,{\sf 2})$;
\item $\hat{\s}_Y:Y\lra\tilde{T}\Psi\Phi\tilde{S}(Y)=\tilde{\s}_{\b Y}(Y)$.
\end{itemize}

In order to obtain the Bezhanishvili-Morandi-Olberding Duality Theorem \cite{BMO}, we now undertake a further ``algebraization'' of the objects $(A,Y)$ of ${\bf UdeV}$.
Following the key idea from \cite{BMO}, we
encode the subset inclusion $Y\hookrightarrow{\bf deV}(A,{\sf 2})$ of a de Vries pair $(A,Y)$ by a de Vries morphism $A\to{\sf P}(Y)$, with the complete atomic Boolean algebra ${\sf P}(Y)$ regarded as a discrete de Vries algebra (so that $\ll_{\circ}\;=\;\subseteq$; see \ref{deVriesAlgebraic}). As $Y$ is recovered from ${\sf P}Y$ as its set of atoms, no loss of information occurs in the process, as we make precise in what follows.

\begin{defi}\label{associated deV ext}
\rm  	
Given a de Vries pair $(A,Y)$, maintaining the notation of Theorem \ref{hom de Vries},
we call the composite map
\begin{center}
$\gamma_{(A,Y)} = \xymatrix{[\;A\ar[rr]^{\tilde{\tau}_A\qquad\quad} & & {\sf RC}({\bf deV}(A,{\sf 2}))\ar[r]^{\rm int}_{\cong} &  {\sf RO}({\bf deV}(A,{\sf 2}))\ar[rr]^{\quad Y\cap(-)}& &{\sf P}(Y)\;]\; }$
\end{center}
the {\em Booleanization} of $(A,Y)$. By definition, $\gamma_{(A,Y)}$ is for every $a\in A$ described by
$$\gamma_{(A,Y)}(a)=Y\cap{\rm int}(\tilde{\tau}(a))=Y\cap\{\p\in{\bf deV}(A,{\sf 2})\st \p(a)=1\}= \{\p\in Y\st\p(a)=1\}.$$
\end{defi}
A proposition similar to the next one was established in \cite{BMO}, but in another setting.

\begin{pro}\label{UBMO prop} Let $(A,Y)$ be a de Vries pair.

\medskip
{\em (1)}
	With the complete atomic Boolean algebra ${\sf P}(Y)$
regarded as a discrete de Vries algebra,
 the Booleanization $\gamma_{(A,Y)}:A\to {\sf P}(Y)$ of $(A,Y)$
	is an injective de Vries morphism, and every atom in ${\sf P}(Y)$ may be presented as a meet of elements in the image of $\gamma_{(A,Y)}$.

\medskip
	
	{\em(2)}
	For any other de Vries pair $(A',Y')$ and any bijection $h:Y\to Y'$  satisfying (in the notation of Definition {\em \ref{deVries pair}})
$\{h\inv(Y'_{a'})\st a'\in A'\}=\{Y_a\st a\in A\}$,
\begin{itemize}
\item[\em(a)]	
the de Vries morphism\/ ${\sf P}(h)\diamond \gamma_{(A',Y')}$ has the same image in\/ ${\sf P}(Y)$ as\/  $\gamma_{(A,Y)}$, and
	\item[\em (b)]
if the de Vries pair $(A,Y)$ is universal, then
  there exists a unique de Vries morphism $\alpha:A'\to A$ %with $\gamma\diamond \alpha=\gamma'$.
  rendering the diagram
 \begin{center}$\xymatrix{A'\ar[r]^{\a}\ar[d]_{\gamma_{(A',Y')}} & A\ar[d]^{\gamma_{(A,Y)}}\\
 {\sf P}(Y')\ar[r]^{{\sf P}(h)} & {\sf P}(Y)\\
 }$	
 \end{center}
  commutative in the category ${\bf deV}$, that is: ${\sf P}(h)\diamond\gamma_{(A',Y')}=\gamma_{(A,Y)}\diamond\a$.
	\end{itemize}
\end{pro}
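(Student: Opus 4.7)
My plan is to prove the three assertions in order, using the concrete formula
$\gamma_{(A,Y)}(a)=\{\varphi\in Y:\varphi(a)=1\}$
that arises from the description $\int(\tilde\tau_A(a))=\{\varphi\in{\bf deV}(A,{\sf 2}):\varphi(a)=1\}$ recorded at the end of Section 6. For part (1), since ${\sf P}(Y)$ carries the discrete de Vries structure (so that $\ll_\circ\,=\,\subseteq$), the axioms (V1)--(V4) for $\gamma_{(A,Y)}$ can be checked pointwise using that each $\varphi\in Y$ is itself a de Vries morphism $A\to{\sf 2}$: (V1) is immediate from $\varphi(0)=0$; (V2) reduces to $\varphi(a\wedge b)=\varphi(a)\wedge\varphi(b)$ with intersection on the right; and (V4) unfolds into $\gamma_{(A,Y)}(a)=\bigcup\{\gamma_{(A,Y)}(b):b\ll a\}$ by way of $\varphi(a)=\bigvee\{\varphi(b):b\ll a\}$. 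The axiom (V3) requires a two-step interpolation: given $a^{*}\ll b$, insert $c$ with $a^{*}\ll c\ll b$; for $\varphi\in Y$ with $\varphi(a)=0$, (V3) for $\varphi$ on the pair $(a,c)$ yields $1=\varphi(a)^{*}\leq\varphi(c)$, and (V3) for $\varphi$ on the pair $(c^{*},b)$, combined with the inequality $\varphi(c^{*})\leq\varphi(c)^{*}$ noted in \ref{deVriesAlgebraic}, produces $1=\varphi(c)=\varphi(c^{*})^{*}\leq\varphi(b)$.

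Injectivity of $\gamma_{(A,Y)}$ is a direct consequence of the de Vries pair condition: if $a\ne a'$, then (WLOG) $a\wedge(a')^{*}>0$, so the pair condition yields $\varphi\in Y$ with $\varphi(a\wedge(a')^{*})=1$; by (V2), $\varphi(a)=1$ and $\varphi((a')^{*})=1$, whence $\varphi(a')=0$ from $\varphi(a')\wedge\varphi((a')^{*})=\varphi(0)=0$. To represent an atom $\{\varphi\}\in{\sf P}(Y)$ as a meet in $\mathrm{Im}\,\gamma_{(A,Y)}$, I claim $\{\varphi\}=\bigcap\{\gamma_{(A,Y)}(a):\varphi(a)=1\}$: for $\psi\ne\varphi$ in $Y$, the clusters $\mathfrak c_{\varphi}$ and $\mathfrak c_{\psi}$ are distinct and, by the non-comparability of distinct clusters recalled in \ref{deVries}, neither contains the other, so an element $a\in\mathfrak c_{\psi}\setminus\mathfrak c_{\varphi}$ yields $c:=a^{*}$ with $\varphi(c)=1$ and $\psi(c)=0$, excluding $\psi$.

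For part (2a), the central observation is that ${\sf P}(h)=h^{-1}$ is a Boolean isomorphism between complete atomic Boolean algebras, so that it preserves all suprema; by the description of $\diamond$ in \ref{deVriesAlgebraic}, this gives ${\sf P}(h)\diamond\gamma_{(A',Y')}={\sf P}(h)\circ\gamma_{(A',Y')}=h^{-1}\circ\gamma_{(A',Y')}$. Next, from $\int(\tilde\tau_{A}(a))={\bf deV}(A,{\sf 2})\setminus\tilde\tau_{A}(a^{*})$ I obtain the identity $\gamma_{(A,Y)}(a)=Y\setminus Y_{a^{*}}$; since $a\mapsto a^{*}$ is a bijection on $A$, the image of $\gamma_{(A,Y)}$ equals $\{Y\setminus Y_{c}:c\in A\}$, and likewise for $(A',Y')$. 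The hypothesis $\{h^{-1}(Y'_{a'}):a'\in A'\}=\{Y_{a}:a\in A\}$ now directly yields $h^{-1}(\mathrm{Im}\,\gamma_{(A',Y')})=\{Y\setminus h^{-1}(Y'_{c'}):c'\in A'\}=\{Y\setminus Y_{c}:c\in A\}=\mathrm{Im}\,\gamma_{(A,Y)}$.

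For part (2b), the universality of $(A,Y)$ applied to the bijection $h$ and the de Vries pair $(A',Y')$ produces a de Vries morphism $\alpha:A'\to A$ with $h(\varphi)=\varphi\diamond\alpha$ for every $\varphi\in Y$, unique by Remark \ref{uniqueness}. Commutativity of the displayed square is then verified pointwise: for $a'\in A'$,
${\sf P}(h)(\gamma_{(A',Y')}(a'))=h^{-1}\{\varphi'\in Y':\varphi'(a')=1\}=\{\varphi\in Y:(\varphi\diamond\alpha)(a')=1\}$,
and unfolding $\varphi\diamond\alpha=(\varphi\circ\alpha)^{\vee}$ rewrites this set as $\bigcup\{\gamma_{(A,Y)}(\alpha(b')):b'\ll a'\}$, which coincides with $(\gamma_{(A,Y)}\diamond\alpha)(a')$ because suprema in ${\sf P}(Y)$ are unions. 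Uniqueness of $\alpha$ follows since any other $\alpha'$ making the diagram commute would satisfy the same pointwise equality, hence $h(\varphi)=\varphi\diamond\alpha'$ for all $\varphi\in Y$, and Remark \ref{uniqueness} then forces $\alpha'=\alpha$. The main obstacle is the two-step interpolation needed for (V3) in part (1), together with the careful bookkeeping that translates the hypothesis (phrased in terms of the sets $Y_{a}$) into a statement about the image of $\gamma_{(A,Y)}$, via the identity $\gamma_{(A,Y)}(a)=Y\setminus Y_{a^{*}}$.
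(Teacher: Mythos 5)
Your proof is correct and follows essentially the same route as the paper: the de Vries axioms (V1)--(V4) for $\gamma_{(A,Y)}$ are verified pointwise from the corresponding axioms of the morphisms $\varphi\in Y$, part (2a) reduces to the identification of the image of $\gamma_{(A,Y)}$ with $\{Y\setminus Y_c\st c\in A\}$ (the paper phrases this as $\gamma_{(A,Y)}(A)={\sf RO}(Y)$), and part (2b) is the same chain of equivalences converting $h(\varphi)=\varphi\diamond\alpha$ into commutativity of the square, with uniqueness via Remark \ref{uniqueness}. Two local sub-arguments differ, both validly: for injectivity you separate $a\neq a'$ directly with a $\varphi\in Y$ satisfying $\varphi(a\wedge (a')^*)=1$, where the paper instead factors $\gamma_{(A,Y)}$ through a chain of Boolean isomorphisms between regular-open/closed algebras; and for the atoms you use the non-comparability of distinct clusters to exhibit $\{\varphi\}$ as $\bigcap\{\gamma_{(A,Y)}(a)\st\varphi(a)=1\}$, where the paper argues topologically that the $\gamma_{(A,Y)}(a)$ form an open base of the $T_1$-space $Y$. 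Your two-step interpolation for (V3) is harmless but unnecessary: since ${\sf 2}$ is discrete, $a^*\ll b$ gives $\varphi(a)^*\le\varphi(b)$ directly from (V3) for $\varphi$.
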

\begin{proof}
(1)  To see that $\gamma:=\gamma_{(A,Y)}$ is injective, with $X:={\bf deV}(A,{\sf 2})$ it suffices to show that the map
$Y\cap (-):\ro(X)\lra {\sf P}(Y)$ is injective. But this map is just a codomain enlargement
of $Y\cap (-):\ro(X)\lra \ro(Y)$, which is the composite of the Boolean isomorphisms $\cl_X:\ro(X)\lra\rc(X)$,  $r=Y\cap(-):\rc(X)\lra\rc(Y)$ (see \ref{projective}), and $\int_Y:\rc(Y)\lra\ro(Y)$.

Furthermore, since the sets ${\rm int}(\tilde{\tau}(a))\;(a\in A)$ form a base for open sets in $X$, the sets $\g(a)\;(a\in A)$ form a base for open sets in $Y$. Every singleton set $\{\p\}$ in this
$T_1$-space, {\em i.e.} every atom of ${\sf P}(Y)$, is an intersection of its basic open neighbourhoods in $Y$, and these lie all in the image of $\g$.

We verify that $\g$ is a de Vries morphism. Indeed, as for every de Vries morphism $\p:A\to{\sf 2}$ one has  $\p(0)=0$, trivially  $\gamma(0)=\emptyset$ follows, {\em i.e.}, $\g$ satisfies (V1). Similarly, one derives the satisfaction of (V2-4) for $\g$ from the corresponding properties of the de Vries morphisms $\p$, as follows. (V2): Since $1=\p(a\we b)=\p(a)\we\p(b)$ means equivalently $\p(a)=1=\p(b)$ for  all $\p$, the equality $\g(a\we b)=\g(a)\cap\g(b)$ follows.\\
(V3):\ Since $a^*\ll b$ in $A$ implies $\p(a)^*\ll\p(b)$ for all $\p$, so that when $\p(a)=0$ one must have $\p(a)^*=1$ and then also $\p(b)=1$, we conclude $\g(a)^*=\{\p\in Y\st \p(a)=0\}\subseteq\g(b)$. (V4): Since $\p(a)=\bigvee\{\p(b)\st b\ll a\}$, for all $\p$ one has $\p(a)=1$ precisely when $\p(b)=1$ for some $b\ll a$ in $A$. Hence, $\g(a)=\bigcup\{\g(b)\st b\ll a\}$.

 \medskip

 (2)(a) Let us note first that $\PPP(h):\PPP(Y\ap)\to\PPP(Y)$ is a (sup-preserving) Boolean isomorphism and, hence, a de Vries morphism satisfying $\PPP(h)\diamond \g'={\sf P}(h)\circ\g'$, where $\g'=\g_{(A',Y')}$.
 Since $\g(A)=\ro(Y)$ (see the proof of (1)) and, likewise, $\g'(A\ap)=\ro(Y\ap)$,
 and since the hypotheses make $h:Y\to Y\ap$ a homeomorphism of the respective subspaces of ${\sf deV}(A,{\sf 2})$ and ${\sf deV}(A',{\sf 2})$, we conclude
 $$({\sf P}(h)\diamond\g')(A')={\sf P}(h)({\sf RO}(Y'))=\ro(Y)=\g(A).$$

  (b)
   The universality of the de Vries pair  $(A,Y)$ gives us a uniquely determined de Vries morphism
 $\a: A'\to A$ with $h(\p)=\p\di\a$ for all $\p\in Y$. We claim that this last equality implies the commutativity of the diagram above, {\em i.e.}, that $h\inv(\g'(a'))=(\g\di\a)(a')$ holds for all $a'\in A'$, and that this implication is in fact reversible.

 Indeed, for the necessity of the condition we have that, for all $a'\in A'$ and $\p\in Y$,

 \medskip

 \begin{tabular}{ll}
$\p\in h\inv(\g\ap(a'))$&$\iff h(\p)\in\g(a')$ \\
&$\iff   h(\p)(a')=1$\\
& $\iff(\p\di\a)(a')=1$ \qquad\qquad\qquad\qquad  (by hypothesis)\\
&$\iff\exists\; b'\ll a'	\text{ in }A': \p(\a(b'))=1$\\
&$\iff\exists\; b'\ll a'	\text{ in }A': \p\in\g(\a(b'))$\\
&$\iff\p\in (\g\di\a)(a')$.\\
\end{tabular}

\medskip

\noindent Conversely, assuming the commutativity of the diagram and reshuffling the above equivalences, one obtains for all $\p\in Y$ and $a'\in A'$ the equivalence
$$h(\p)(a')=1\iff(\p\di\a)(a')=1,$$
which means  $h(\p)=\p\di\a$ for all $\p\in Y$. In conclusion, while the necessity of the condition shows the existence of the desired de Vries morphism $\a$, the sufficiency implies the uniqueness of $\a$, by Remark \ref{uniqueness}.
\end{proof}

\begin{rem}
\rm
The de Vries duality associates the de Vries algebra ${\sf 2}$ with the singleton space $\sf 1\cong{\bf deV}({\sf 2,\sf 2})$ which, in categorical terms, is a regular generator (also called separator) in the category ${\bf KHaus}$ (since every compact Hausdorff space is a quotient of the Stone-\v{C}ech compactification of its underlying set, considered as a discrete space).
Consequently, $\sf 2$ is a regular cogenerator (or coseparator) in the category ${\bf deV}$. This makes every de Vries algebra a subalgebra of some power of $\sf 2$. The injectivity of the map $\g_{(A,Y)}$ of Proposition \ref{UBMO prop} may be seen as a consequence of this important categorical role of the de Vries algebra $\sf 2$.

Moreover, $\sf 2$ is regular injective in $\bf deV$ since, trivially, the singleton space $\sf  1$ is regular projective in $\bf KHaus$, {\em i.e.}, projective with respect to regular epimorphisms in ${\bf KHaus}$, which are the surjective continuous maps.
\end{rem}

Proposition \ref{UBMO prop} motivates the following definition	

\begin{defi}\label{UBMO def0}
\rm
(1) An injective de Vries morphism $\gamma:A\to B$ into a complete atomic Boolean algebra $B$ (regarded as a discrete de Vries algebra) is called a {\em Boolean de Vries extension} (or just a {\em de Vries extension}, \cite{BMO}) of the de Vries algebra $A$ if every atom in $B$ is a meet of elements in the image of $\g$.

(2) A Boolean de Vries extension $\gamma:A\to B$ is called {\em universal} (or {\em maximal}, \cite{BMO}), if every Boolean de Vries extension $\g\ap:A\ap\to B$ into the same Boolean algebra $B$ with $\g(A)=\g\ap(A\ap)$ factors as $\g'=\g\di\a$, for some de Vries morphism $\a: A'\to A$.
\end{defi}

We should clarify immediately how to regard a Boolean de Vries extension as a de Vries pair.
Considering the fact that the complete Boolean homomorphisms $\xi:B\to{\sf 2}$ of the complete atomic Boolean algebra $B$ correspond bijectively to the atoms $x$ in $B$, so that the map $$\vk_B:{\sf At}(B)\lra {\bf CABA}(B,{\sf 2}),\ \ x\mapsto \xi_x,$$ where $(\xi_x(b)=1\iff x\leq b)$ for all $b\in B$, is a natural bijection
 (see \ref{Tarski}), we define:

\begin{defi}\label{Theta}
\rm
For a Boolean de Vries extension $\g:A\to B$, we consider the hom map
$${\bf deV}(\g,{\sf 2}):{\bf deV}(B,{\sf 2})\lra{\bf deV}(A,{\sf 2}),\quad\xi\longmapsto \xi\diamond\g,$$
and then the image $Y^{\g}$ of ${\bf CABA}(B,2)\subseteq{\bf deV}(B,2)$ under this map:
$$Y^{\g}=\{\xi\diamond\g\st\xi\in{\bf CABA}(B,{\sf 2})\}=\{\xi\circ\g\st\xi\in{\bf CABA}(B,{\sf 2})\}.$$
Since, for every $a>0$ in $A$, the injectivity of $\g$ guarantees $\g(a)>0$ and, thus,
$\g(a)\geq x$ for some $x\in{\sf At}(B)$, we obtain $(\xi_x\di\g)(a)=\xi_x(\g(a))=1$ with $\xi_x\di\g\in Y^{\g}$. Hence, $(A,Y^{\g})$ is a de Vries pair, and we call it {\em induced}\/ by $\g$ and $Y^{\g}$ the {\em trace} of $\g$.
\end{defi}

With $\g_{\rm at}$ denoting the (``atomic'') restriction of ${\bf deV}(\g,{\sf 2})$
we have the trivially commuting diagram
\begin{center}
$\xymatrix{{\bf deV}(A,{\sf 2}) & {\bf deV}(B,{\sf 2})\ar[l]_{{\bf deV}(\g,{\sf 2})}\\
Y^{\g}\ar@{^(->}[u] & {\bf CABA}(B,{\sf 2})\,.\ar@{^(->}[u]\ar[l]_{\g_{\rm at}\quad }\\
}$	
\end{center}
We regard $Y^{\g}$ as a subspace of the compact Hausdorff space ${\bf deV}(A,{\sf 2})$
and, likewise, ${\bf CABA}(B,{\sf 2})$ as a subspace of the Stone space ${\bf deV}(B,{\sf 2})\cong {\sf Ult}(B)$ and first prove:

\begin{pro}\label{atomic lemma}
 {\em (1)} For every Boolean de Vries extension $\g:A\lra B$, the map $\g_{\rm at}:{\bf CABA}(B,{\sf 2})\lra Y^{\g}\;,\quad\xi\longmapsto\xi\circ\g,$
 is a
 continuous bijection.

 {\em (2)} The Booleanization $\g_{(A,Y)}$ of a universal de Vries pair $(A,Y)$ is a universal Boolean de Vries extension.
\end{pro}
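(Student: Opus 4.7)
The plan is to handle Part (1) by direct calculation using the Tarski bijection $\vk_B:{\sf At}(B)\to{\bf CABA}(B,{\sf 2})$, and to reduce Part (2) to Proposition \ref{UBMO prop}(2)(b) by producing an appropriate bijection $h$ between $Y$ and the trace of $\g'$. For Part (1), continuity of $\g_{\rm at}$ follows by observing that it is the restriction-corestriction of the continuous map ${\bf deV}(\g,{\sf 2}):{\bf deV}(B,{\sf 2})\to{\bf deV}(A,{\sf 2})$ (continuity recorded in Theorem \ref{hom de Vries}) to the subspace ${\bf CABA}(B,{\sf 2})$ in the domain and to $Y^\g$ in the codomain. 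Surjectivity is built into the definition of $Y^\g$. For injectivity, I would transport $\xi_x,\xi_{x'}\in{\bf CABA}(B,{\sf 2})$ to atoms $x,x'\in B$ via $\vk_B$; the equality $\xi_x\circ\g=\xi_{x'}\circ\g$ then amounts to $(x\le\g(a))\iff(x'\le\g(a))$ for every $a\in A$. Writing $x=\bigwedge\g(S)$ for some $S\subseteq A$ (possible by Definition \ref{UBMO def0}(1)), I obtain $x'\le\g(a)$ for every $a\in S$, hence $x'\le x$; the symmetric argument yields $x=x'$.

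For Part (2), I set $\g=\g_{(A,Y)}:A\to{\sf P}(Y)$ and consider any other Boolean de Vries extension $\g':A'\to{\sf P}(Y)$ with $\g(A)=\g'(A')$. I form the de Vries pair $(A',Y')$ with $Y'=Y^{\g'}\subseteq{\bf deV}(A',{\sf 2})$ (Definition \ref{Theta}) and define $h:Y\to Y'$ by $h(\p)=\chi_Y^\p\diamond\g'$, which reduces to the ordinary composition $\chi_Y^\p\circ\g'$ since $\chi_Y^\p$, being a complete Boolean homomorphism, is sup-preserving. Surjectivity of $h$ follows from the definition of $Y^{\g'}$ combined with the Tarski identification ${\bf CABA}({\sf P}(Y),{\sf 2})=\{\chi_Y^\p:\p\in Y\}$, while injectivity uses that $\g'(A')=\g(A)={\sf RO}(Y)$ is a base for the open sets of the regular Hausdorff space $Y$ (the equality $\g(A)={\sf RO}(Y)$ being part of the proof of Proposition \ref{UBMO prop}(1)). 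To check the compatibility condition required by Proposition \ref{UBMO prop}(2)(b), I would compute $Y_a=Y\setminus\g(a^*)$ and $h^{-1}(Y'_{a'})=Y\setminus\g'((a')^*)$ directly from the respective definitions, and then use that $a\mapsto a^*$ is a bijection on $A$ (resp.\ on $A'$) to see that $\{\g(a^*):a\in A\}=\g(A)=\g'(A')=\{\g'((a')^*):a'\in A'\}$, whence the two families of complements in $Y$ coincide. Invoking the universality of $(A,Y)$ via Proposition \ref{UBMO prop}(2)(b) then yields a unique de Vries morphism $\a:A'\to A$ with ${\sf P}(h)\diamond\g_{(A',Y')}=\g\diamond\a$. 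A final direct computation, using $h(\p)(a')=1\iff\p\in\g'(a')$ and the sup-preserving character of the Boolean isomorphism ${\sf P}(h)$, shows ${\sf P}(h)\diamond\g_{(A',Y')}=\g'$, so $\g'=\g\diamond\a$, establishing the universality of $\g_{(A,Y)}$.

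The main obstacle is not any hard technical step but conceptual bookkeeping: tracking the Tarski correspondence between atoms of ${\sf P}(Y)$ and points of $Y$, and ensuring that the hypothesis $\g(A)=\g'(A')$ (phrased as an equality of subsets of ${\sf P}(Y)$) translates cleanly into the set-theoretic compatibility condition required by Proposition \ref{UBMO prop}(2)(b). The easiest place to go wrong is to treat $\g$ or $\g'$ as a Boolean homomorphism into the full power set ${\sf P}(Y)$; the Boolean-homomorphism property holds only as a map into ${\sf RO}(Y)$, so one must instead use the bijection $a\mapsto a^*$ together with the set-theoretic descriptions of $Y_a$ and $h^{-1}(Y'_{a'})$ to secure the compatibility condition.
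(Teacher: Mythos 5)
Your proof is correct and follows essentially the same route as the paper's: part (1) via continuity-by-restriction plus the atoms-as-meets-of-image-elements argument for injectivity, and part (2) by forming the induced de Vries pair $(A',Y^{\g'})$, the bijection $h(\p)=\chi_Y^{\p}\circ\g'$, and an appeal to Proposition \ref{UBMO prop}(2)(b). The only cosmetic differences are that the paper obtains bijectivity of $h$ by citing part (1) rather than re-running the base-of-open-sets argument, and that it first verifies ${\sf P}(h)\diamond\g_{(A',Y')}=\g'$ and then deduces the compatibility condition $\{h^{-1}(Y'_{a'})\st a'\in A'\}=\{Y_a\st a\in A\}$ from the resulting equality of images, whereas you check that condition directly via $Y_a=Y\setminus\g(a^*)$ and the bijection $a\mapsto a^*$.
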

\begin{proof}
(1) As $\g_{\rm at}$ is a surjective subspace restriction of the continuous map ${\bf deV}(\g,{\sf 2})$, it suffices to prove that $\g_{\rm at}$ is injective.	 Every atom $x$ in $B$ may be written as $x=\bigwedge_{i\in I}\g(a_i)$. Hence, with the complete Boolean homomorphism $\xi_x:B\to{\sf 2}$ representing $x$, one has $1=\xi_x(x)=\bw_{i\in I}\xi(\g(a_i))$. Assuming $\xi_x\circ\g=\xi_z\circ\g$ with $\xi_z$ representing $z\in{\sf At}(B)$, we then obtain $\xi_z(x)=\bw_{i\in I}\xi_z(\g(a_i))=\bw_{i\in I}\xi_x(\g(a_i))=1$, or $z\leq x$.
Hence $z=x$ and thus $\xi_x=\xi_z$.

(2) By Proposition \ref{UBMO prop}(1), $\g=\g_{(A,Y)}:A\to{\sf P}(Y)=B$  is a Boolean de Vries extension. To confirm its universality, let $\g':A'\to B$ be an injective de Vries morphism with $\g'(A')=\g(A)$. With (1) we have that the composite map
\begin{center}
$\xymatrix{
h:=(Y\ar[r]^{\vk_B\quad} & {\bf CABA}(B,{\sf 2})\ar[r]^{\quad \g'_{\rm at}} & Y^{\g'}=:Y'),\\
}$
$\p\longmapsto \xi_{\p}\diamond\g',$
\end{center} is a bijection.	 Moreover, the Boolean isomorphism ${\sf P}(h)$ makes the Boolean de Vries extension $\g':A'\to B$ factor through the Booleanization of the de Vries pair $(A',Y')$, induced by $\g'$, as in
\begin{center}
$\xymatrix{& {\sf P}(Y')\ar[rd]^{{\sf P}(h)} & \\
A'\ar[ur]^{\g_{(A',Y')}}\ar[rr]^{\g'} & & {\sf P}(Y)\,.\\
}$	
\end{center}

\noindent Indeed, for all $a'\in A'$ and $\p\in Y$ one has
\smallskip

\begin{tabular}{ll}
$\p\in({\sf P}(h)\di\g_{(A',Y')})(a')$ & $\iff h(\p)\in\g_{(A',Y')}(a')$\\
&$\iff h(\p)(a')=1$\\
&$\iff \xi_{\p}(\g'(a'))=1$\\
&$\iff \p\in\g\ap(a\ap)$.\\
\end{tabular}

\medskip
\noindent Consequently,
$${\sf P}(h)({\sf RO}(Y'))={\sf P}(h)(\g_{(A',Y')}(A'))=\g'(A')=\g(A)={\sf RO}(Y);$$ equivalently, ${\sf P}(h)({\sf RC}(Y'))={\sf RC}(Y),$ or, in the notation of Proposition
\ref{UBMO prop}(2),
$$\{h\inv(Y'_{a'})\st a'\in A'\}=\{Y_a\st a\in A\},$$ which then gives the existence of a
(unique) de Vries morphism $\alpha:A'\to A$ with $\gamma\diamond \alpha=\gamma'$.
\end{proof}

The universal Boolean de Vries extensions are the objects of the category
$${\bf UBdeV}$$
whose morphisms $(\alpha,\delta):\gamma\to\gamma'$ are given by commutative squares
\begin{center}
$\xymatrix{A\ar[r]^{\a}\ar[d]_{\g} & A'\ar[d]^{\g'}\\
B\ar[r]^{\delta} & B'\\
}$	
\end{center}
in ${\bf deV}$, with a de Vries morphism $\a$ and a complete Boolean homomorphism $\delta$, so that  $\delta\diamond\g=\g'\diamond\a $;
note that $\delta\diamond \g$ is simply the map composite $\delta\circ\g$ since $\delta$ preserves suprema. The composition in ${\bf UBdeV}$ proceeds as in the arrow category of ${\bf deV}$, {\em i.e.}, by horizontal pasting of diagrams, so that ${\bf UBdeV}$ is in fact a full subategory of ${\bf deV}^{\sf 2}$.

\begin{pro}\label{Gamma functor}
{\em (1)} Assigning to every universal de Vries pair $(A,Y)$ its Booleani-\ zation $\g_{(A,Y)}:A\to{\sf P}(Y)$	 (as in Definition {\em \ref{associated deV ext}}) describes the object map of a functor
$$\Gamma:{\bf UdeV} \lra{\bf UBdeV}\,.$$
{\em (2)} Assigning to every universal Boolean de Vries extension $\g:A\to B$ its induced de Vries pair $(A,Y^{\g})$ (as in Definition {\em \ref{Theta})} describes the object map of a functor
$$\Delta: {\bf UBdeV}\lra{\bf UdeV}\,.$$
\end{pro}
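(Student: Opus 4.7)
The plan is to define both $\Gamma$ and $\Delta$ on morphisms, verify that the needed commutativities and membership conditions hold, and confirm functoriality.

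For $\Gamma$, given a ${\bf UdeV}$-morphism $\a:(A,Y)\to(A',Y')$, the defining condition $\p'\diamond\a\in Y$ for every $\p'\in Y'$ yields a map $f_\a:Y'\to Y$, $\p'\mapsto\p'\diamond\a$, and I would set
$$\Gamma(\a):=(\a,{\sf P}(f_\a)):\g_{(A,Y)}\longrightarrow\g_{(A',Y')}.$$
Three points need to be checked: that ${\sf P}(f_\a)$ is a complete Boolean homomorphism (immediate); that the square commutes, that is, ${\sf P}(f_\a)\diamond\g_{(A,Y)}=\g_{(A',Y')}\diamond\a$; and that the target $\g_{(A',Y')}$ is a universal Boolean de Vries extension, which is Proposition \ref{atomic lemma}(2). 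The commutativity reduces at $a\in A$ to the identity $\{\p'\in Y' \st (\p'\diamond\a)(a)=1\}=\bigcup_{b\ll a}\{\p'\in Y'\st\p'(\a(b))=1\}$, which holds because $(\p'\diamond\a)(a)=\bigvee_{b\ll a}\p'(\a(b))$ takes values in the two-chain ${\sf 2}$. Functoriality then follows from the functoriality of ${\sf P}$ together with $f_{\a'\diamond\a}=f_\a\circ f_{\a'}$, itself a direct consequence of the associativity of $\diamond$.

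For $\Delta$, once the object map is known to take values in ${\bf UdeV}$ (addressed below), I would set $\Delta((\a,\d)):=\a$ on morphisms. Verifying that $\a$ is then a ${\bf UdeV}$-morphism $(A,Y^\g)\to(A',Y^{\g'})$ requires that $\p'\diamond\a\in Y^\g$ for every $\p'\in Y^{\g'}$. Writing $\p'=\xi'\circ\g'$ with $\xi'\in{\bf CABA}(B',{\sf 2})$ and using the commutativity $\g'\diamond\a=\d\diamond\g=\d\circ\g$ (the last identity because $\d$ preserves suprema), one computes
$$\p'\diamond\a=\xi'\diamond(\g'\diamond\a)=\xi'\diamond(\d\circ\g)=(\xi'\circ\d)\circ\g,$$
exploiting the sup-preservation of $\xi'$ and $\d$. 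Since $\xi'\circ\d\in{\bf CABA}(B,{\sf 2})$, this exhibits $\p'\diamond\a$ as an element of $Y^\g$, as required. Functoriality of $\Delta$ is immediate, since composition in both ${\bf UBdeV}$ and ${\bf UdeV}$ is given in the first coordinate by composition in ${\bf deV}$.

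The main obstacle is confirming that $(A,Y^\g)$ is a \emph{universal} de Vries pair, not just a de Vries pair, whenever $\g:A\to B$ is a universal Boolean de Vries extension. My strategy invokes Tarski duality to produce a complete Boolean isomorphism $\mu:{\sf P}(Y^\g)\to B$, built by composing the bijection $\g_{\rm at}:{\bf CABA}(B,{\sf 2})\to Y^\g$ of Proposition \ref{atomic lemma}(1) with the natural bijection ${\bf CABA}(B,{\sf 2})\cong{\sf At}(B)$ and the Tarski iso $B\cong{\sf P}({\sf At}(B))$. A direct unpacking shows the key identity $\mu\circ\g_{(A,Y^\g)}=\g$. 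Now, given a de Vries pair $(A',Y')$ with a bijection $h:Y^\g\to Y'$ satisfying $\{h\inv(Y'_{a'})\st a'\in A'\}=\{Y^\g_a\st a\in A\}$, this last condition translates through ${\sf P}(h)$ and $\mu$ into the equality $\mu({\sf P}(h)(\g_{(A',Y')}(A')))=\g(A)$. Hence the composite $\g'':=\mu\circ{\sf P}(h)\circ\g_{(A',Y')}:A'\to B$ is a Boolean de Vries extension with $\g''(A')=\g(A)$, so by the universality of $\g$ there is a unique de Vries morphism $\a:A'\to A$ with $\g\diamond\a=\g''$. A final evaluation-style computation, expressing $\xi\circ\mu$ as ``membership of $\g_{\rm at}(\xi)\in Y^\g$'' applied to a subset, then shows $h(\p)=\p\diamond\a$ for all $\p\in Y^\g$, establishing universality.
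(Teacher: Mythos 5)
Your proposal is correct and follows essentially the same route as the paper: the same morphism assignments $(\a,{\sf P}(f_\a))$ and $(\a,\d)\mapsto\a$, the same pointwise verification of the square ${\sf P}(f_\a)\circ\g_{(A,Y)}=\g_{(A',Y')}\diamond\a$ via $(\p'\diamond\a)(a)=\bigvee_{b\ll a}\p'(\a(b))$, and the same Tarski-isomorphism argument (your $\mu$ is the paper's $\nu_B=\tilde{\theta}_B\inv\circ{\sf P}(\g_{\rm at})$) reducing the universality of $(A,Y^{\g})$ to the universality of $\g$ as a Boolean de Vries extension.
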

\begin{proof}
(1) By Proposition \ref{atomic lemma}(1), $\Gamma$ is well-defined on objects.
 Further,  for a morphism $\a:(A,Y)\to(A',Y')$ in ${\bf UdeV} $, one lets $\GA(\a) =(\a,\PPP(f_{\a})):\g_{(A,Y)}\to\g_{(A',Y')}$ and must confirm the commutativity of the diagram %Then $\Gamma(\a,f):\Gamma(A,Y)\to\Gamma(A',Y')$ is given by the diagram
\begin{center}
$\xymatrix{A\ar[rr]^{\a}\ar[d]_{\g_{(A,Y)}} && A'\ar[d]^{\g_{(A',Y')}}\\
{\sf P}(Y)\ar[rr]^{\PPP(f_{\a})} && {\sf P}(Y')\\
}.$	
\end{center}
But this may be be seen similarly as at the end of
 the proof of
 Proposition \ref{UBMO prop}(2). Indeed, for all $a\in A$ and $\p'\in Y'$ one has
\begin{align*}
\p'\in \PPP(f_{\a})(\g_{(A,Y)}(a)) &\iff f_{\a}(\p\ap)\in \g_{(A,Y)}(a)\\
&\iff (\p'\diamond\a)(a)=1\\
&\iff \exists\, b\ll a:\p'(\a(b))=1\\
&\iff \p'\in\bigcup\{\g_{(A',Y')}(\a(b))\st b\ll a\}\\
& \iff \p'\in (\g_{(A',Y')}\diamond\a)(a) \,.\\
\end{align*}
The functoriality of  $\Gamma$ is now trivial.

We must verify that the de Vries pair $(A,Y^{\g})$ is universal when the Boolean de Vries extension $\g:A\to B$ is universal.	To this end, using the notation of \ref{Tarski} and Definitions \ref{associated deV ext} and \ref{Theta}, one easily confirms that the diagram
\begin{center}
$\xymatrix{ A\ar[r]^{\g}\ar[d]_{\g_{(A,Y^{\g})}} & B\ar[d]^{\tilde{\theta}_B}\\
{\sf P}(Y^\g)\ar[r]^{{\sf P}(\g_{\rm at})\qquad} & {\sf P}({\bf CABA}(B,{\sf 2}))\\
}$	
\end{center}
commutes. Hence, with the Boolean isomorphism $\nu_B:=\tilde{\theta}_B\inv\circ{\sf P}(\g_{\rm at})$ and with $Y:=Y^{\g}$, the map $\nu_B\circ\g_{(A,Y)}$ has the same image as $\g$ in $B$.
Now, given any de Vries pair $(A',Y')$ and a bijection $h:Y\to Y'$ as in Definition \ref{universaldeVpair}(2), from Proposition \ref{UBMO prop}(2)(a) we know that ${\sf P}(h)\circ\g_{(A',Y')}$ has the same image as $\g_{(A,Y)}$ in ${\sf P}(Y)$. Consequently, the Boolean de Vries extension $\nu_B\circ{\sf P}(h)\circ\g_{(A',Y')}$ has the same image in $B$ as $\g$ and must therefore factor through $\g$ in ${\bf deV}$, as shown in the diagram
\begin{center}
$\xymatrix{A'\ar@{-->}[rr]^{\a}\ar[d]_{\g_{(A',Y')}} && A\ar[d]^{\g}\ar[ld]_{\g_{(A,Y)}}\\
{\sf P}(Y')\ar[r]_{{\sf P}(h)} & {\sf P}(Y)\ar[r]_{\nu_B} &B\,.\\
}$	
\end{center}
As at the end of the proof of Proposition \ref{UBMO prop}(2)(b), from  ${\sf P}(h)\di\g_{(A',Y')}=\g_{(A,Y)}\di\a$ one concludes that $h(\p)=\p\di\a$ holds for every $\p\in Y$. This confirms the universality of the de Vries pair $(A,Y^{\g})$.
	
	For a morphism $(\a,\delta):(\g:A\to B)\lra(\g':A'\to B')$ in ${\bf UBdeV}$,  %as in Definition \ref{UBMO def}
	since
	$$(\xi'\circ\gamma')\diamond\a=\xi'\circ(\g'\diamond\a)=\xi'\circ(\delta\circ\g)=(\xi'\circ\delta)\circ\g$$
	for all $\xi'\in{\bf CABA}(B',{\sf 2})$, the hom map
	${\bf deV}(\a,{\sf 2}):{\bf deV}(A',{\sf 2})\lra{\bf deV}(A,{\sf 2})$ maps $Y^{\g'}$ into $Y^{\g}$. Hence, $\Delta(\a,\delta):=\a:(A,Y^{\g})\to(A',Y^{\g'})$ is a morphism in ${\sf UdeV}$, and the functoriality of $\Delta$ is trivial.
	\end{proof}

We are now ready to prove the main theorem of this section.

\begin{theorem}\label{BMO theorem}
The functors $\Gamma$ and $\Delta$ %: \DVT\longrightarrow {\bf UBMO}$
of\/ {\em Proposition \ref{Gamma functor}} satisfy
$$\Delta\circ\Gamma={\rm Id}_{\bf UdeV} \quad\text{ and }\quad \Gamma\circ\Delta\cong{\rm Id}_{\bf UBdeV}$$
and therefore exhibit the category $\bf UdeV$ as an equivalent retract of the category $\bf UBdeV$.
\end{theorem}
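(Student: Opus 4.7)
The plan is to verify the two assertions separately, leveraging the concrete descriptions of $\Gamma$ and $\Delta$ already assembled in Propositions \ref{UBMO prop}, \ref{atomic lemma} and \ref{Gamma functor}.

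First, I would check that $\Delta\circ\Gamma={\rm Id}_{\bf UdeV}$ holds strictly. Given $(A,Y)\in|{\bf UdeV}|$, the Booleanization $\gamma_{(A,Y)}(a)=\{\varphi\in Y:\varphi(a)=1\}$ has codomain ${\sf P}(Y)$, whose atoms are the singletons $\{y\}$, $y\in Y$. Under the canonical bijection $\vk_{{\sf P}(Y)}:{\sf At}({\sf P}(Y))\to{\bf CABA}({\sf P}(Y),{\sf 2})$ recalled in \ref{Tarski}, the atom $\{y\}$ corresponds to the characteristic morphism $\xi_{\{y\}}$, for which a direct evaluation gives $\xi_{\{y\}}\circ\gamma_{(A,Y)}=y$ as de Vries morphisms $A\to{\sf 2}$. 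Hence $Y^{\gamma_{(A,Y)}}=Y$; and on morphisms the equality is immediate from $\Gamma(\alpha)=(\alpha,{\sf P}(f_\alpha))$ together with the projection definition of $\Delta$.

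For $\Gamma\circ\Delta\cong {\rm Id}_{\bf UBdeV}$, I would take as the component at a universal Boolean de Vries extension $\gamma:A\to B$ the pair $(1_A,\nu_B)$, with
$$\nu_B:=\tilde{\theta}_B^{-1}\circ{\sf P}(\gamma_{\rm at}):{\sf P}(Y^\gamma)\longrightarrow B$$
already introduced in the proof of Proposition \ref{Gamma functor}. The commutative square exhibited there rewrites as $\nu_B\circ\gamma_{(A,Y^\gamma)}=\gamma$, so $(1_A,\nu_B)$ is a well-defined morphism from $(\Gamma\circ\Delta)(\gamma)=\gamma_{(A,Y^\gamma)}$ to $\gamma$ in ${\bf UBdeV}$; it is moreover an isomorphism because $\gamma_{\rm at}$ is a bijection by Proposition \ref{atomic lemma}(1) and $\tilde{\theta}_B$ is the Tarski isomorphism, so both ${\sf P}(\gamma_{\rm at})$ and $\tilde{\theta}_B^{-1}$ are Boolean isomorphisms.

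The main step that remains to be carried out is the naturality of $\nu$. For a morphism $(\alpha,\delta):\gamma\to\gamma'$ in ${\bf UBdeV}$ the required square reads $\delta\circ\nu_B=\nu_{B'}\circ{\sf P}(f_\alpha)$, and after applying $\tilde{\theta}_{B'}$ on both sides it reduces, for every $\xi'\in{\bf CABA}(B',{\sf 2})$ and every $S\subseteq Y^\gamma$, to the equivalence $(\xi'\circ\delta)\circ\gamma\in S\iff f_\alpha(\xi'\circ\gamma')\in S$. Since $\xi'$ preserves suprema, the sup-preservation of composites and the associativity of $\diamond$ yield $f_\alpha(\xi'\circ\gamma')=(\xi'\circ\gamma')\diamond\alpha=\xi'\circ(\gamma'\diamond\alpha)$, and the defining identity $\gamma'\diamond\alpha=\delta\circ\gamma$ of the morphism $(\alpha,\delta)$ then closes the computation. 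With $\Delta\circ\Gamma={\rm Id}$ and $\Gamma\circ\Delta\cong{\rm Id}$ both established, the final claim follows by the very definition of ``equivalent retract''.
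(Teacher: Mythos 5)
Your proposal is correct and follows essentially the same route as the paper: $\Delta\circ\Gamma={\rm Id}_{\bf UdeV}$ is verified via the same identification of the atoms of ${\sf P}(Y)$ with the points of $Y$ through their characteristic morphisms, and the isomorphism $\Gamma\circ\Delta\cong{\rm Id}_{\bf UBdeV}$ is realized by (the inverse of) the same comparison ${\sf P}(\gamma_{\rm at}^{-1})\circ\tilde{\theta}_B$ built from the Tarski isomorphism and Proposition \ref{atomic lemma}(1). The only difference is that you carry out explicitly the naturality check that the paper declares a routine matter, and your reduction of it to the identity $(\xi'\circ\delta)\circ\gamma=f_{\alpha}(\xi'\circ\gamma')$ via the associativity of $\diamond$ and the sup-preservation of $\xi'$ and $\delta$ is sound.
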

\begin{proof}
For every universal de Vries pair
$(A,Y)$, one has
$\Delta(\Gamma(A,Y))=\Delta(\g_{(A,Y)})=(A,Y^{\g_{(A,Y)}})$, where $$Y^{\g_{(A,Y)}}\!=\!\{\xi\circ\g_{(A,Y)}\st \xi\in{\bf CABA}({\sf P}(Y),{\sf 2})\}\hookrightarrow{\bf deV}(A,{\sf 2}).$$
Hence, it suffices to show $Y^{\g_{(A,Y)}}=Y$ to be able to deduce $\Delta(\Gamma(A,Y))=(A,Y)$. But under the natural bijection $\chi_Y:Y\to{\bf CABA}({\sf P}(Y),{\sf 2})$ of \ref{Tarski}, we can write $\xi\in{\bf CABA}({\sf P}(Y),{\sf 2})$ equivalently as $\chi_Y^{\p}$ with $\p\in Y$, and for all $a\in A$ one has
$$ (\chi_Y^{\p}\circ\g_{(A,Y)})(a)=1\iff\chi_Y^{\p}(\{\psi\in Y\st \psi(a)=1\})=1\iff\p(a)=1\,,$$
so that $\chi_Y^{\p}\circ\g_{(A,Y)}=\p$. Hence, $Y^{\g_{(A,Y)}}=Y$ follows. That $\Delta\circ\Gamma$ maps morphisms identically as well is now obvious.

For every universal Boolean de Vries extension $\g:A\to B$, we compute
$$\Gamma(\Delta(\g))=\Gamma(A,\,Y^{\g})\!=\!\{\xi\circ\g\st\xi\in{\bf CABA}(B,{\sf 2})\}\,)=(\g_{(A,Y^{\g})}:A\longrightarrow {\sf P}(Y^{\g})),$$
with $\g_{(A,Y^{\g})}(a)=\{\xi\circ\g\st \xi\in {\bf CABA}(B,{\sf 2}),\, \xi(\g(a))=1\}$, for all $a\in A$. But with the Tarski isomorphism $\tilde{\theta}_B$ (see \ref{Tarski}) and the bijection $\g_{\rm at}$ of Proposition \ref{atomic lemma}(1) we have the natural composite Boolean isomorphism
\begin{center}
$\xymatrix{B\ar[r]^{\tilde{\theta}_B\qquad\quad} & {\sf P}({\bf CABA}(B,{\sf 2}))\ar[r]^{\qquad{\sf P}(\g_{\rm at}\inv)} & {\sf P}(Y^{\g}),\\
}$\hfil $b\mapsto\{\xi\st\xi(b)=1\}\mapsto\{\xi\circ\g\st	\xi(b)=1\}.$
\end{center}
So, for every $a\in A$, this isomorphism maps $\g(a)$ precisely to the set $\g_{(A,Y^{\g})}(a)$, and we obtain the commutative diagram
\begin{center}
$\xymatrix{A\ar[rr]^{1_A}\ar[d]_{\g} && A\ar[d]^{\g_{(A,Y^{\g})}}\\
B\ar[rr]^{{\sf P}(\g_{\rm at}\inv)\circ\ti{\theta}_B\;} && {\sf P}(Y^{\g})
}$
\end{center}
which represents an isomorphism $\g\to \g_{(A,Y^{\g})}$ in ${\bf UBdeV}$. The confirmation of its naturality with respect to $\g$ involves only a routine check.	\end{proof}

\begin{cor}
The Bezhanishvili-Morandi-Olberding duality	{\em \cite{BMO}} may be obtained as the composite
of the equivalences of Proposition {\em \ref{deV application}} and Theorems {\em \ref{fdttychth}} and {\em \ref{BMO theorem}}:
\begin{center}
$
\xymatrix{{\bf UBdeV}^{\rm op}\ar@/^0.6pc/[rr]^{\quad\Delta\quad} & \scriptstyle{\simeq} &{\bf UdeV} ^{\rm op}\ar@/^0.6pc/[ll]^{\quad\Gamma\quad}\ar@/^0.6pc/[rr]^{\Psi\;} & \scriptstyle{\simeq}
 &{\sf D}(\AA,\JJ,\XX)^{\rm op}\ar@/^0.6pc/[rr]^{\quad\tilde{T}}\ar@/^0.6pc/[ll]^{\Phi\;} & {\scriptstyle \simeq} & {{\bf Tych}\;.}\ar@/^0.6pc/[ll]^{\quad\tilde{S}}}$
\end{center}
\end{cor}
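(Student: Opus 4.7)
The plan is to simply invoke the three preceding results in sequence and observe that their composition is a dual equivalence between $\bf UBdeV$ and $\bf Tych$. Theorem~\ref{BMO theorem} provides a (covariant) equivalence $\Gamma:{\bf UdeV}\to{\bf UBdeV}$ with quasi-inverse $\Delta$, and Theorem~\ref{fdttychth} provides a (covariant) equivalence $\Psi:{\bf UdeV}\to{\sf D}(\AA,\JJ,\XX)$ with quasi-inverse $\Phi$. Dualizing both, we obtain equivalences between the opposite categories. Composing these with the dual equivalence $\tilde T\dashv\tilde S$ of Proposition~\ref{deV application}, we immediately obtain the dual equivalence
\[
\tilde T\circ\Psi\circ\Delta:{\bf UBdeV}^{\rm op}\lra {\bf Tych}, \qquad \Gamma\circ\Phi\circ\tilde S:{\bf Tych}\lra{\bf UBdeV}^{\rm op},
\]
whose unit and counit are obtained by pasting those of the three constituent (dual) equivalences in the obvious way.

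To verify that the resulting dual equivalence coincides (up to natural isomorphism) with the one established in \cite{BMO}, I would track the object assignments explicitly. Starting with a universal Boolean de Vries extension $\g:A\to B$ in $\bf UBdeV$, the functor $\Delta$ produces the universal de Vries pair $(A,Y^{\g})$ (see Definition~\ref{Theta}); $\Psi$ re\-interprets this as the object $(A,\,Y^{\g}\hookrightarrow{\bf deV}(A,{\sf 2}))$ of ${\sf D}(\AA,\JJ,\XX)$; and $\tilde T$ then returns the Tychonoff space $Y^{\g}$, which by Proposition~\ref{atomic lemma}(1) is a continuous bijective image of ${\bf CABA}(B,{\sf 2})\cong{\sf At}(B)$. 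Conversely, starting with $Y\in|{\bf Tych}|$, $\tilde S$ yields $({\sf RC}(\b Y),\,Y\hookrightarrow\b Y\to{\bf deV}({\sf RC}(\b Y),{\sf 2}))$; $\Phi$ records this as the universal de Vries pair $({\sf RC}(\b Y),\,\tilde\sigma_{\b Y}(Y))$; and $\Gamma$ passes to its Booleanization $\g_{({\sf RC}(\b Y),\tilde\sigma_{\b Y}(Y))}:{\sf RC}(\b Y)\to{\sf P}(\tilde\sigma_{\b Y}(Y))$, which is precisely the universal Boolean de Vries extension attached by BMO to $Y$.

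The one point that requires genuine verification---and which I expect to be the main obstacle---is that the continuous bijection $\g_{\rm at}:{\bf CABA}(B,{\sf 2})\to Y^{\g}$ of Proposition~\ref{atomic lemma}(1) is actually a homeomorphism for every universal $\g\in|{\bf UBdeV}|$. Once this is in hand, the composite $\tilde T\circ\Psi\circ\Delta$ carries $\g$ to a space canonically homeomorphic to the absolute/Tarski dual of the atom set of $B$, which is the BMO description. In fact, since the composite is already known to be one half of a dual equivalence, the bijection $\g_{\rm at}$ is forced to be a homeomorphism a posteriori: this follows because both $Y^{\g}$ and ${\bf CABA}(B,{\sf 2})$, viewed via the composite equivalence and via BMO's functor respectively, represent the same hom-functor on $\bf Tych$. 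Hence the required identification is automatic, and the proof reduces to assembling the three known equivalences together with the routine check that the Booleanization is natural in morphisms, which has already been carried out in Proposition~\ref{Gamma functor}.
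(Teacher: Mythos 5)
Your overall route is the same as the paper's: the corollary is proved simply by composing the three established (dual) equivalences and then reading off the object assignments of the composites $\tilde T\circ\Psi\circ\Delta$ and $\Gamma\circ\Phi\circ\tilde S$, exactly as in the paragraph following the corollary in the paper. Your tracking of those assignments (trace $Y^{\g}$ in one direction, Booleanization of $({\sf RC}(\b Y),\tilde\sigma_{\b Y}(Y))$ in the other) agrees with the paper.

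However, the point you single out as ``the main obstacle'' --- that the continuous bijection $\g_{\rm at}:{\bf CABA}(B,{\sf 2})\to Y^{\g}$ of Proposition \ref{atomic lemma}(1) should be a homeomorphism --- is both false in general and not needed. The set ${\bf CABA}(B,{\sf 2})$ sits inside the Stone space ${\bf deV}(B,{\sf 2})\cong{\sf Ult}(B)$ as the set of principal ultrafilters, and each singleton $\{\xi_x\}$ ($x\in{\sf At}(B)$) is open there (it equals $\varepsilon_B(x)\cap{\bf CABA}(B,{\sf 2})$), so ${\bf CABA}(B,{\sf 2})$ is a \emph{discrete} subspace; by contrast $Y^{\g}$ is a dense subspace of the compact Hausdorff space ${\bf deV}(A,{\sf 2})$ and is typically not discrete (take $\g=\g_{({\sf RC}([0,1]),\,[0,1])}$). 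The reason this does not matter is that the topology on the dual space is \emph{not} transported from $B$: the functor $\tilde T\circ\Psi$ hands you $Y^{\g}$ with its subspace topology in ${\bf deV}(A,{\sf 2})$, i.e.\ the topology determined by $A$, and $\g_{\rm at}$ is used only as a \emph{bijection of underlying sets}, to identify the points of the dual space with the atoms of $B$ (equivalently, to see that $\Gamma\circ\Delta\cong{\rm Id}$ via the Tarski isomorphism, as in the proof of Theorem \ref{BMO theorem}). Your proposed a posteriori justification (``both represent the same hom-functor on $\bf Tych$'') is also circular, since it presupposes the agreement with the BMO functor that is being established. Dropping the homeomorphism claim entirely, and simply noting that the composites described above coincide with the BMO assignments on objects and morphisms, leaves a correct proof that matches the paper's.
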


Here the composite dual equivalence $\tilde{T}\circ\Psi\circ\Delta$ assigns to a universal Boolean de Vries extension $\g:A\to B$ its trace $Y^{\g}=\{\xi\circ\g\st\xi\in{\bf CABA}(B,{\sf 2})\}$ in ${\bf deV}(A,{\sf 2})$, and $\Gamma\circ\Phi\circ\tilde{S}$ first embeds a Tychnoff space $Y$ into the compact space ${\bf deV}(A,{\sf 2})$ with $A={\sf RC}(\b Y)$ (or, more simply, $A={\sf RC}(Y)$, at the expense of having to use a less simple contact relation for the de Vries structure--see Remark \ref{avoiding beta}) and then sends it to the Booleanization $\g_{(A,Y)}:A\to{\sf P}(Y)$ of the universal de Vries pair $(A,Y)$, where we have identified $Y$ with its image under the embedding into ${\bf deV}(A,{\sf 2})$.

\end{document}